\documentclass[a4paper,reqno]{amsart}
\usepackage[utf8]{inputenc}
\usepackage{amssymb}
\usepackage{enumitem}
\usepackage{mathrsfs}
\usepackage{mathtools}
\usepackage{exscale}
\usepackage{relsize}
\usepackage{tikz-cd}
\usepackage[all,pdf]{xy}
\setlist[enumerate]{label=\textnormal{(\roman*)}}
\usepackage{hyperref}
\usepackage{color}
\usepackage{verbatim}

\newtheorem{theorem}{Theorem}[section]
\newtheorem{corollary}[theorem]{Corollary}
\newtheorem{lemma}[theorem]{Lemma}
\newtheorem{proposition}[theorem]{Proposition}

\theoremstyle{definition}
\newtheorem{definition}[theorem]{Definition}
\newtheorem{remark}[theorem]{Remark}
\newtheorem{conjecture}[theorem]{Conjecture}

\numberwithin{equation}{section}
\newcommand\Pl{\mathbb{P}^{1}}
\newcommand{\OO}{\mathcal O}

\newcommand*{\dif}{\mathop{}\!\mathrm{d}}

\newcommand{\sC}{{\mathcal C}}
\newcommand{\sD}{{\mathcal D}}
\newcommand{\sE}{{\mathcal E}}
\newcommand{\sF}{{\mathcal F}}
\newcommand{\sG}{{\mathcal G}}

\newcommand{\sL}{{\mathcal L}}
\newcommand{\sM}{{\mathcal M}}

\newcommand{\sO}{{\mathcal O}}
\newcommand{\sP}{{\mathcal P}}
\newcommand{\sQ}{{\mathcal Q}}

\newcommand{\sS}{{\mathcal S}}
\newcommand{\sT}{{\mathcal T}}

\newcommand{\sV}{{\mathcal V}}

\newcommand{\A}{{\mathbb A}}

\newcommand{\F}{{\mathbb F}}

\newcommand{\N}{{\mathbb N}}
\renewcommand{\P}{{\mathbb P}}
\newcommand{\Q}{{\mathbb Q}}
\newcommand{\R}{{\mathbb R}}

\newcommand{\Z}{{\mathbb Z}}
\newcommand{\id}{{\rm id}}
\newcommand{\rank}{{\rm rank}}

\newcommand{\Gl}{{\rm Gl}}

\newcommand{\Pic}{\mathrm{Pic}}

\newcommand{\Gr}{\mathrm{Gr}}
\newcommand{\Gm}{\mathbb{G}_m}
\newcommand{\Spec}{\mathrm{Spec}}

\newcommand{\Char}{\textrm{char}}

\newcommand{\HIG}{\mathrm{HIG}}

\newcommand{\eg}{\textit{e}.\textit{g}.}
\newcommand{\etc}{\textit{etc}.}
\newcommand{\Par}{\textrm{par}}
\newcommand{\nil}{\textrm{nil}}
\newcommand{\Higgs}{\textnormal{Higgs}}
\newcommand{\pardeg}{\mathrm{pdeg}}
\begin{document}

\title[A torsion property of the zero of Kodaira-Spencer]
	{A torsion property of the zero of Kodaira-Spencer over $ \Pl $ removing four points}

	\author[Xiaojin Lin]{Xiaojin Lin}	
	\author[Mao Sheng]{Mao Sheng}
	\author[Jianping Wang]{Jianping Wang}
	
	\email{xjlin@mail.ustc.edu.cn}
	\email{msheng@ustc.edu.cn}
	\address{School of Mathematical Sciences, University of Science and Technology of China, Hefei, 230026, China.}
	\email{msheng@mail.tsinghua.edu.cn}
	\address{Yau Mathematical Science Center, Tsinghua University, Beijing, 100084, China.}
	\email{jianpw@mail.tsinghua.edu.cn}
	\address{Yau Mathematical Science Center, Tsinghua University, Beijing, 100084, China.}
	\begin{abstract}
			We establish a torsion theorem to the effect that the unique zero of the Kodaira-Spencer map attached to a certain quasi-semistable family of complex projective varieties over the complex projective line is the image of a torsion point of an elliptic curve under the natural projection. The proof is a mod $p$ argument and requires a density one set of primes. There are three essential ingredients in the proof: a solution to the conjecture of Sun-Yang-Zuo \cite{SYZ}, which constitutes the principal part of the paper, Pink's theorem \cite{Pink2004OnTO}, and Higgs periodicity theorem \cite{LS}\cite{ksdR}.
	\end{abstract}
\thanks{The work is supported by CAS Project for Young Scientists in Basic Research Grant No. YSBR-032, National Key Research and Development Project SQ2020YFA070080, National Natural Science Foundation of China (Grant No. 11721101), Fundamental Research Funds for the Central Universities.}

	\maketitle
	\tableofcontents
	\section{Introduction}
In his Lyon's talk \cite{Zuo} on a joint project with Jinbang Yang, Kang Zuo has envisioned an arithmetic theory of Higgs bundles over number fields, based on the notion of a periodic Higgs-de Rham flow \cite{sshiggs}. About the same time, the second named author and his collaborators \cite{LS}, \cite{ksdR}, \cite{kshiggs} have independently developed a basic theory of periodic Higgs/de Rham bundles over \emph{complex} algebraic curves, aiming at extending the periodicity in positive characteristic, first introduced in \cite{SZ12}, to the field of complex numbers. By solving a beautiful conjecture due to Sun-Yang-Zuo \cite{SYZ}, we shall present in this work a geometric application of the theory on the Kodaira-Spencer maps associated to certain quasi-semistable families over the complex projective line.   
	
	Let $C$ be a smooth projective complex curve, and $D\subset C$ a reduced effective divisor. 
	To a smooth projective morphism $f: X\to U$ with $U\subset C-D$, one attaches the $i$-th parabolic Gau{\ss}-Manin system for each $i\geq 0$, which is a parabolic de Rham bundle over $(C,D)$. 
	Its associated graded parabolic Higgs bundle is called a parabolic Kodaira-Spencer system over $(C,D)$. 
	A parabolic Higgs bundle over $(C,D)$ is said to be \emph{motivic} if it is a direct factor of some parabolic Kodaira-Spencer system (see \cite[Definition 3.4]{ksdR}). 
	\begin{definition}(\cite[Definition 4.7]{ksdR})\label{globally periodic}
		A parabolic Higgs bundle $(E,\theta)$ over $(C,D)$ is called \emph{periodic} if it is graded and there exists:
		\begin{itemize}
			\item an integer $f$;
			\item a spreading-out $$(\mathscr{C},\mathscr{D}, \mathscr{E},\Theta)\rightarrow S,$$ where $S$ is an integral scheme of finite type over $\Z$; and
			\item a proper closed subscheme $Z\subset S$,  
		\end{itemize} 
		such that for all geometric points $s\in S-Z$, the reduction $(\mathscr{E}_s,\Theta_s)$ at $s$ is $f$-periodic with respect to \emph{all} $W_2(k(s))$-lifting $\tilde s: \Spec(W_2(k(s)))\to S-Z$. 
	\end{definition}
	The Higgs periodicity theorem links motivicity to periodicity. 
	\begin{theorem}[Theorem 1.3 \cite{LS} , Theorem 5.10 \cite{ksdR}]\label{Higgs periodicity theorem}
		Motivic parabolic Higgs bundles over $(C,D)$ are periodic.  
	\end{theorem}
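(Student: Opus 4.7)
The plan is to reduce to positive characteristic via spreading out, then establish periodicity pointwise using the relative logarithmic Deligne--Illusie decomposition attached to the family $f: X \to U$. Since $(E,\theta)$ is by hypothesis a direct summand of a parabolic Kodaira--Spencer system $\mathrm{Gr}_{F_{\mathrm{Hod}}}\, R^i f_*\Omega^\bullet_{X/U}(\log D)$, I would first spread the entire package $\bigl((C,D),\, U,\, f: X \to U,\, (E,\theta)\bigr)$, together with the Higgs projector cutting $(E,\theta)$ out as a summand, to integral models over a finitely generated $\Z$-algebra $\sO_S$. A proper closed $Z_0 \subset S$ is then removed so that at every geometric point $s \in S - Z_0$: the residue characteristic $p = \Char k(s)$ exceeds the rank, the log Hodge-to-de Rham spectral sequence degenerates fiberwise, the Hodge filtration has locally free graded quotients of constant rank, the spread-out projector remains an idempotent, and the parabolic-logarithmic inverse Cartier transform $C^{-1}$ of Ogus--Vologodsky type (in the sense of \cite{LS,ksdR}) is defined on the $s$-fibers.

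Fix $s \in S - Z_0$ with a $W_2(k(s))$-lifting $\tilde s$. The lifting supplies exactly the datum needed for the relative logarithmic Deligne--Illusie decomposition of $f_s$. The heart of the matter is a canonical isomorphism of parabolic de Rham bundles on $(C_s, D_s)$
$$C^{-1}_{\tilde s}\bigl(\mathrm{Gr}_{F_{\mathrm{Hod}}}\, R^i f_{s*}\Omega^\bullet_{X_s/U_s}(\log)\bigr) \;\cong\; \bigl(R^i f_{s*}\Omega^\bullet_{X_s/U_s}(\log),\,\nabla_{\mathrm{GM}}\bigr),$$
under which the canonical Hodge filtration on the right corresponds to the filtration produced by $C^{-1}_{\tilde s}$ on the left. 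Passing to $\mathrm{Gr}_{F_{\mathrm{Hod}}}$ shows that one step of the Higgs--de Rham flow $(E,\theta) \mapsto \mathrm{Gr}_{F_{\mathrm{Hod}}}\,C^{-1}_{\tilde s}(E,\theta)$ sends the full Kodaira--Spencer system back to itself; restricting along the spread-out idempotent yields $(E_s,\theta_s)$ again, giving $f$-periodicity for a small integer $f$ (possibly $f=1$, up to a controlled semilinear twist that is absorbed in finitely many iterations). The canonicity of every construction in $\tilde s$ supplies the uniformity in $s$ demanded by Definition \ref{globally periodic}, with the bad locus $Z$ being the union of $Z_0$ with the constructible loci where the various hypotheses fail.

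The principal technical obstacle, which absorbs most of \cite{LS} and \cite{ksdR}, is the construction and analysis of the parabolic-logarithmic inverse Cartier transform: one must verify that it respects parabolic weights, interacts correctly with residues of $\nabla_{\mathrm{GM}}$ along $D$, and is compatible with the relative Deligne--Illusie decomposition in the presence of log poles. A secondary difficulty is descent of the periodicity through the Higgs projector--i.e., ensuring that both $C^{-1}_{\tilde s}$ and $\mathrm{Gr}_{F_{\mathrm{Hod}}}$ commute with the direct-summand decomposition at the parabolic level, so that periodicity for the ambient Kodaira--Spencer system restricts to periodicity for its summand $(E,\theta)$. Uniformity of $f$ and the existence of $Z$ then follow from constructibility of the relevant Hom sheaves of parabolic Higgs bundles in a finite-type family.
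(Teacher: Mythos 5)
The paper does not prove this theorem; it is imported verbatim from \cite{LS} and \cite{ksdR}, so there is no in-paper proof to compare against. Measured against what those references actually do, your sketch gets the architecture right: spread out the whole geometric package including the family $f$ and the projector, invoke at each closed characteristic-$p$ fiber the positive-characteristic Hodge theory of the Gau\ss-Manin system (the Fontaine-module/relative Ogus--Vologodsky statement that $C^{-1}_{\tilde s}$ applied to the Hodge grading recovers the Gau\ss-Manin connection with its Hodge filtration), and read off one-periodicity of the full parabolic Kodaira--Spencer system. That much is the correct skeleton, and you correctly isolate the parabolic-logarithmic inverse Cartier transform as the heavy machinery.

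The genuine gap is in what you call the ``secondary difficulty,'' and your proposed remedy does not close it. You spread out the Higgs idempotent $e$ cutting out $(E,\theta)$ and then assert that ``restricting along the spread-out idempotent'' transfers the one-periodicity of the ambient Kodaira--Spencer system to $(E,\theta)$. For this you need the de Rham endomorphism $C^{-1}_{\tilde s}(e_s)$ of the spread-out Gau\ss-Manin bundle to be \emph{strict} for the Hodge filtration, i.e.\ you need $F_{\mathrm{Hod}}$ to split as a direct sum along the induced de Rham decomposition. That is not automatic: a Higgs endomorphism of $\mathrm{Gr}_{F_{\mathrm{Hod}}}(V,\nabla)$ need not lift to a filtration-compatible endomorphism of $(V,\nabla,F_{\mathrm{Hod}})$. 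The missing input is Deligne semisimplicity and the theorem of the fixed part over $\C$ (equivalently, the polystability of motivic parabolic Higgs bundles recorded in \cite[Lemma 5.4]{ksdR} and footnoted in this paper), which upgrades $e$ to a morphism of polarizable VHS; one must then spread out the filtered sub-de Rham bundle, not merely the Higgs projector. Alternatively, \cite{SW} and \S 2 of the present paper run the descent through gr-semistable filtrations and $S$-equivalence, exploiting uniqueness of the gr-stable filtration on each stable piece. Your appeal to ``constructibility of the relevant Hom sheaves of parabolic Higgs bundles'' only produces the idempotent over an open part of $S$; it says nothing about its compatibility with $F_{\mathrm{Hod}}$, which is the whole issue.

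A smaller point: be careful with the phrase ``up to a controlled semilinear twist that is absorbed in finitely many iterations.'' For the full Kodaira--Spencer system the Fontaine-module structure gives honest one-periodicity; the period can genuinely grow for a summand only because the flow on a polystable object may permute its stable factors, and that is exactly the mechanism the gr-semistable-filtration argument is designed to track. As written, the appeal to a ``semilinear twist'' is not grounded in anything the setup provides.
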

	Now let $D\subset \P^1_{\mathbb{C}}$ be an effective divisor consisting of four distinct points. 
	The following \emph{torsion theorem} is the main result of the paper.
	\begin{theorem}\label{torsion theorem}
		Let $(E,\theta)$ be a rank two motivic parabolic Higgs bundle over $(\P^1_{\mathbb{C}},D)$ which has a unique zero $x$ inside $\P^1_{\mathbb{C}}-D$ and has equal parabolic weights at each component of $ D $. 
		Then $x$ must be the image of a torsion point of the elliptic curve $C$ under $\pi: C\to \P^1_{\mathbb{C}}$, the double covering map to $\P^1_{\mathbb{C}}$ with branch locus $D$.
	\end{theorem}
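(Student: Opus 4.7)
My plan is to combine the three ingredients advertised in the abstract, travelling from complex geometry to positive characteristic and back again.

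First I would apply the Higgs periodicity theorem (Theorem~\ref{Higgs periodicity theorem}): since $(E,\theta)$ is motivic, it is periodic. Fix a spreading-out $(\sC,\sD,\sE,\Theta)\to S$ and a closed $Z\subset S$ witnessing Definition~\ref{globally periodic} with period $f$, and spread out the double cover $\pi: C\to\P^1_{\mathbb{C}}$ simultaneously over $S$. After possibly shrinking $S$, for a density-one set of closed points $s\in S-Z$ I obtain an $f$-periodic rank-two parabolic Higgs bundle $(\sE_s,\Theta_s)$ on $(\P^1_{k(s)},\sD_s)$ with equal parabolic weights at each component of $\sD_s$, together with the unique zero $x_s\in\P^1_{k(s)}-\sD_s$ and the elliptic double cover $\pi_s:\sC_s\to\P^1_{k(s)}$ branched along $\sD_s$.

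Next I would invoke the main result of the paper, namely the solution to the Sun-Yang-Zuo conjecture \cite{SYZ}. In the present rank-two, four-point, equal-weight setting, this should state precisely that the zero of any such $f$-periodic parabolic Higgs bundle on $(\P^1_{k(s)},\sD_s)$ is the image under $\pi_s$ of a torsion point of $\sC_s$. Consequently, if I fix any preimage $y\in C$ of $x$ (together with its spreading-out $y_s\in\sC_s$), then $y_s$ is a torsion point of the elliptic curve $\sC_s$ for a density-one set of primes.

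Finally, Pink's theorem \cite{Pink2004OnTO} on points of abelian varieties which specialise to torsion at a set of places of positive density then forces $y$ itself to be a torsion point of $C$, so that $x=\pi(y)$ is the image of a torsion point, as claimed. The first and third steps invoke the cited theorems essentially as black boxes; the principal obstacle --- and the technical heart of the paper --- is the middle step, namely establishing the Sun-Yang-Zuo conjecture in the precise form required above, which is where the bulk of the work must go.
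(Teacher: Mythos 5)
Your overall architecture — periodicity from Theorem~\ref{Higgs periodicity theorem}, reduction mod $p$, the Sun--Yang--Zuo conjecture, then Pink — matches the paper's route, and your instinct that the middle step is where the work lies is right. But the way you phrase the middle step and the final step leaves a gap that would make the argument collapse.

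Your step two asserts that, in characteristic $p$, the zero of an $f$-periodic bundle is ``the image of a torsion point of $\sC_s$,'' and from this you conclude that ``$y_s$ is a torsion point of $\sC_s$ for a density-one set of primes.'' This is vacuous: \emph{every} $\bar\F_p$-point of an elliptic curve is torsion, so no information whatsoever is gained. What Theorem~\ref{main result} actually gives (and what the proof uses) is quantitative: the flow operator $\varphi_{\lambda,p}$ on $\P^1$ is induced, via the key formula \eqref{key formula}, by the flow operator $\phi_{\lambda,p}$ on $C_\lambda$, and Lemma~\ref{arithmetic lemma} identifies $\phi_{\lambda,p}=\pm[p]$. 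Hence $f$-periodicity of $(\sE_s,\Theta_s)$ forces $(\pm p)^f y_s = y_s$, i.e.\ $y_s$ is killed by $p^f\mp 1$. It is this bound on the \emph{order} of the reduction, uniformly over the fixed period $f$, that Pink's theorem turns into torsionness in characteristic zero. Correspondingly, your description of Pink's theorem (``points which specialise to torsion at a set of places of positive density are torsion'') is not a correct statement — every rational point specialises to torsion at every good prime — and the actual hypothesis is precisely the order constraint you omitted. The paper also invokes Masser's theorem \cite{Masser1989SpecializationsOF} to handle the case where $\lambda$ (hence $C_\lambda$) is not defined over $\bar\Q$; without it the argument only works over number fields. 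Finally, a smaller normalization point: the equal-weight hypothesis is used through Proposition~\ref{identification of self-maps} to reduce $(E,\theta)$ to the untwisted form $(\sO_{\P^1}\oplus\sO_{\P^1}(-1),\theta_x)$, which is what feeds into the key formula \eqref{key formula}; this step is implicit in your ``black box'' invocation but should be made explicit, since it is where the hypotheses on parabolic weights enter.
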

	\begin{remark}
The torsioness theorem is essentially due to Jinbang Yang and Kang Zuo. Indeed, in Lyon's talk \cite{Zuo}, Kang Zuo has already predicted such a phenomenon, based on a calculation on 26 quasi-semistable families of elliptic curves over $\mathbb P^1$ with three semistable fibers and one quasi-semistable fiber, carried out by Xin Lv, Jun Lu and Jinbang Yang. It is clear to Yang and Zuo that, when $\lambda$ is defined over $\bar \Q$, the solution of Sun-Yang-Zuo conjecture (see below) plus Pink's theorem on characterizing torsion points of an abelian variety defined over $\bar \Q$, leads to the torsion theorem in the essential case of the theorem. Their calculation implies that the apparent singularities \footnote{The apparent singularity of the Gau{\ss}-Manin connection is nothing but the zero of its corresponding Kodaira-Spencer map. It is the zero of $ \delta $ \cite[Theorem 3.10]{Doran2001AlgebraicAG}, which is exactly the apparent singularity of Gau{\ss}-Manin connection by the computation in \cite[P.16]{Gontsov2009ApparentSO}} of Family 2-3 in Table 1 \cite{Doran2001AlgebraicAG} are torsion. 
\end{remark}

	We defer its proof until the end of the section. 
	Let $k$ be an algebraically closed field of char $p\geq 3$, let $\lambda\in k-\{0,1\}$.
	Form the reduced divisor $D=0+1+\lambda+\infty$ of projective line $\P^1_{k}$ over $ k $, and fix a $W_2(k)$-lifting $(\P^1_{W_2(k)},\tilde D)$ of the log pair $(\Pl_{k},D)$ over $k$.
	We abbreviate $ \Pl_{k} $ to $ \Pl $.
	Consider the following type of rank two graded logarithmic Higgs bundles $(E,\theta)$, where 
	\begin{equation*}
	E=E^{1,0}\oplus E^{0,1}=\sO_{\Pl} \oplus \sO_{\Pl}(-1),
	\end{equation*}
	and the Higgs field
	\begin{equation*}
	\theta: \sO_{\Pl} \to \sO_{\Pl}(-1) \otimes \Omega_{\P^{1}}(D)
	\end{equation*}
	is nonzero. 
	It is stable and the irreducible component of the moduli space of Higgs bundles containing the isomorphism class $[(E,\theta)]$ is simply a $\P^1$. 
	Indeed, the moduli space is isomorphic to  the complete linear system $|\sO_{\Pl}(-1)\otimes \Omega^1_{\Pl}(D)|=|\sO_{\Pl}(1)|\cong \P^1$. 
	The isomorphism class is labelled by the zero of $\theta$. 
	Thinking of $\P^1=\A^1\cup \{\infty\}$, we may write $\theta=\theta_x$ for $x$ is the zero of $\theta$ in its affine coordinate.

	In \cite{SYZ}, Sun-Yang-Zuo introduced the notion of \emph{twisted periodic Higgs-de Rham flow}, and as an illustration of their theory, they showed that the flow induces a self map 
	\begin{equation*}
	\varphi_{\lambda,p}: \Pl(k)\to \Pl(k)
	\end{equation*}
	as follows: 
	Let $C^{-1}$ be the inverse Cartier transform from the category of rank two nilpotent logarithmic Higgs bundles over $(\Pl,D)$ to the category of logarithmic connections over $(\Pl,D)$ (with respect to the chosen $W_2$-lifting). 
	Let $\Gr$ be the grading functor with respect to the \emph{Harder-Narasimhan} filtration of the underlying vector bundle. They deduced the following isomorphism:
	\begin{equation}\label{twist by O(m)}
	\Gr\circ C^{-1}(\sO_{\Pl} \oplus \sO_{\Pl}(-1),\theta_x)\cong (\sO_{\Pl} \oplus \sO_{\Pl}(-1),\theta_{x'})\otimes (\sO_{\Pl}(m),0),
	\end{equation}
	where $m=\frac{1-p}{2}$. 
	Ignoring the uninteresting rank one twist factor, one obtains another rank two graded logarithmic Higgs bundle of the same type by the flow. Sun-Yang-Zuo in loc. cit. showed there is an inseparable endomorphism $\varphi_{\lambda,p}$ of $\Pl$ of degree $p^2$ mapping $x$ to $x'$ and make the following 
	
	\begin{conjecture}[Sun-Yang-Zuo, Conjecture 4.8 \cite{SYZ}]\label{SYZ conj}
		Let $C_{\lambda}$ be the elliptic curve defined by the affine equation $y^2=x(x-1)(x-\lambda)$ with the natural projection $(x,y)\mapsto x$, realizing $C_{\lambda}$ as a double cover of $\P^1$ branched along $D$. 
		Then the following diagram commutes:
		\begin{equation}\label{conjecture}
		\begin{gathered}
		\xymatrix{
			C_{\lambda} \ar[r]^{[p]} \ar[d]_{\pi} & C_{\lambda} \ar[d]^{\pi} \\
			\Pl \ar[r]^{\varphi_{\lambda,p}} & \Pl, 
		}
		\end{gathered}  
		\end{equation}
		where $[p]$ is the multiplication by $p$ map of the elliptic curve. 
	\end{conjecture}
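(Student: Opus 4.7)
My approach is to transport the construction of $\varphi_{\lambda,p}$ up to the double cover $\pi:C_\lambda\to\Pl$, where one can hope to recognize the Higgs-de Rham flow as the multiplication-by-$p$ endomorphism of the elliptic curve. First I would analyse the pullback $\pi^*(E,\theta_x)$: because $\pi$ is a Galois double cover tamely ramified of order two along $D$, the logarithmic structure on $(\Pl,D)$ pulls back to a logarithmic structure on $(C_\lambda,R)$ with $R=\pi^{-1}(D)_{\mathrm{red}}$, and the graded logarithmic Higgs bundle $(E,\theta_x)$ becomes a graded logarithmic Higgs bundle on $(C_\lambda,R)$ whose Higgs field has vanishing divisor equal to the fibre $\pi^{-1}(x)=P_x+\iota P_x$, where $\iota$ is the hyperelliptic involution. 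The isomorphism class of this pulled-back bundle is entirely determined by $x\in\Pl$, equivalently by the unordered pair $\{P_x,\iota P_x\}\subset C_\lambda$.

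Next, I would fix a $W_2(k)$-lifting $\tilde\pi:\tilde C\to\P^1_{W_2}$ compatible with $(\P^1_{W_2},\tilde D)$ and invoke the Galois-equivariance of the Ogus-Vologodsky inverse Cartier transform on tamely ramified covers to obtain an isomorphism $\pi^* C^{-1}_{(\Pl,D)}(E,\theta_x)\cong C^{-1}_{(C_\lambda,R)}\pi^*(E,\theta_x)$. Taking Harder-Narasimhan gradings on both sides, and absorbing the rank-one twist $\sO_{\Pl}(m)$ of (\ref{twist by O(m)}), which pulls back to $\sO_{C_\lambda}(2mP_\infty)$, the commutativity of (\ref{conjecture}) reduces to the following statement on the elliptic curve: one step of the Higgs-de Rham flow sends the isomorphism class parametrized by $\{P_x,\iota P_x\}$ to the class parametrized by $\{[p]P_x,[p]\iota P_x\}=\{[p]P_x,\iota([p]P_x)\}$, where we use that $[p]$ commutes with $\iota=-\mathrm{id}_{C_\lambda}$.

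The main obstacle is this final elliptic-curve identification: establishing, uniformly in $\lambda\in k-\{0,1\}$, that the Higgs-de Rham flow on $C_\lambda$ for our specific family of rank-two graded Higgs bundles realises the multiplication-by-$p$ map. The cleanest route is a spectral/Abel-Jacobi parametrization: after normalizing by the Weierstrass point $P_\infty$, one identifies the pair $\{P_x,\iota P_x\}$ with the class of $P_x$ in $\Pic^0(C_\lambda)\cong C_\lambda$, and one must prove that the flow acts on this abelian-variety parameter as the absolute Frobenius (so as $[p]$ after one iteration). This requires a careful local analysis of $C^{-1}$ at the ramification points of $\pi$ together with an explicit determination of the HN-destabilizing line sub-bundle of the resulting logarithmic connection; the argument must also handle the supersingular case, where $[p]$ is purely inseparable, and keep rigorous track of both the logarithmic pullback along $\pi$ and the twist $\sO(m)$ in (\ref{twist by O(m)}).
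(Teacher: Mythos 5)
Your plan has a fundamental gap precisely at the step you flag as ``the main obstacle,'' and the proposed Abel--Jacobi parametrization cannot close it. The pullback $\pi^*(E,\theta_x)$ is a $G$-equivariant ($G=\{1,\sigma\}$) Higgs bundle on $(C_\lambda,R)$, so its isomorphism class, and indeed any functorial invariant of it, is invariant under the hyperelliptic involution $\sigma=\iota$. This means the family is genuinely parametrized by $x\in\P^1$, i.e.\ by the \emph{unordered} pair $\{P_x,\iota P_x\}$, as you correctly note; but your proposed identification of $\{P_x,\iota P_x\}$ with ``the class of $P_x$ in $\Pic^0(C_\lambda)$'' is not well-defined, because $\iota$ acts as $-\mathrm{id}$ on $\Pic^0(C_\lambda)$ and so the two elements of the pair give opposite classes. (And the symmetric invariant $[P_x+\iota P_x-2P_\infty]$ is identically zero.) Thus what you actually obtain is a point of $C_\lambda/\{\pm1\}\cong\P^1$, so the ``reduction to an elliptic-curve identity'' is illusory: by functoriality of the inverse Cartier transform along $\pi$, the flow on the pulled-back family is literally $\pi^*\circ\varphi_{\lambda,p}$, which restates the conjecture rather than making progress on it. Nothing in your outline breaks the $\sigma$-symmetry, so no amount of local analysis of $C^{-1}$ at the ramification points will produce a morphism $C_\lambda\to C_\lambda$ to compare with $[p]$.

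The paper's route resolves exactly this issue, and differently from what you propose. Rather than pulling $(E,\theta_x)$ back, it introduces a genuinely new family over $C_\lambda$ that is \emph{not} $\sigma$-invariant: to $P\in C_\lambda(k)$ one attaches $F(P)=(\sO_{C_\lambda}(P)\oplus\sO_{C_\lambda}(-P),\eta_{2\sigma(P)})$, whose Higgs field has a \emph{double} zero at $\sigma(P)$ (not simple zeros at $P_x$ and $\iota P_x$). The locus of such classes is a section $\Sigma_{\Higgs}$ of the ruled surface $M_1\to C_\lambda$, whereas your pullback lives on a \emph{fiber} $\pi_1^{-1}(Q)$; the section, unlike the fiber, is honestly parametrized by $C_\lambda$. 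The descent to $\P^1$ is then accomplished not by pullback but by a natural transformation: push forward $(F(P),\eta)[G]=(F(P),\eta)\oplus\sigma^*(F(P),\eta)$ along $\pi_{\Par*}$ and take the maximal destabilizer, which recovers $(E,\theta_{\pi(P)})$. One then needs to show (Corollary \ref{flow morphsim preserving Sigma}) that the flow operator preserves $\Sigma_{\Higgs}$ at all --- a nontrivial geometric argument using the structure of the ruled surface $M_1$ and the fixed points from Proposition \ref{intersection points are fixed} --- and finally to identify the resulting self-map of $C_\lambda$ with $\pm[p]$ via a separate arithmetic lemma (Lemma \ref{arithmetic lemma}) treating ordinary and supersingular cases separately. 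None of these three ingredients appears in your plan, and the first two in particular cannot be reached from a pullback-based parametrization.
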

	They verified the conjecture for $ p<50 $. 
	Now we may confirm the conjecture up to a sign.
	\begin{theorem}\label{main result}
		Notation as in Conjecture \ref{SYZ conj}. Then the above diagram commutes up to sign, viz. 
		\begin{equation*}
		\pi \circ \pm [p]=\varphi_{\lambda,p}\circ \pi.
		\end{equation*}
	\end{theorem}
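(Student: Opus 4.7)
The plan is to lift the self-map $\varphi_{\lambda,p}$ of $\Pl$ to an endomorphism $\psi$ of $C_\lambda$ through $\pi$, identify $\psi$ as $\pm[p]$, and descend; the sign ambiguity survives because $\pi$ identifies $P$ with $-P$.

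\textbf{Step 1 (Translate to $C_\lambda$).} I would first realise rank-2 parabolic Higgs bundles of the given type on $(\Pl,D)$ with uniform weight $1/2$ as $[-1]$-equivariant rank-2 objects on $C_\lambda$. Up to a fixed twist, $(\sO_{\Pl}\oplus\sO_{\Pl}(-1),\theta_x)$ then corresponds to the bundle $\sL_P\oplus\sigma^*\sL_P$ on $C_\lambda$, where $\sigma=[-1]$, $\pi(P)=x$, and $\sL_P=\sO_{C_\lambda}(P-O)$ for a fixed base point $O$. This translates the moduli parameter $x\in\Pl$ into a point of $C_\lambda/\sigma$.

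\textbf{Step 2 (Compatibility of $\Gr\circ C^{-1}$ with $\pi$).} Because $p\geq 3$, $\pi$ is tamely ramified of degree two, and $\pi^*\Omega_{\Pl}(\log D)\cong\Omega_{C_\lambda}(\log R)$ for $R=\pi^{-1}(D)_{\mathrm{red}}$. Choosing a $W_2(k)$-lifting of $(C_\lambda,R)$ compatible with the fixed lifting of $(\Pl,D)$, the inverse Cartier transform commutes with $\pi^*$, and the Harder--Narasimhan grading on $(\Pl,D)$ corresponds to the $[-1]$-equivariant version of the HN-grading on $C_\lambda$.

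\textbf{Step 3 (Frobenius and Verschiebung give $[p]$).} For $(\sL_P,0)$ on $C_\lambda$, $C^{-1}$ yields the canonical connection on the relative Frobenius pullback $F^*\sL_P$. Identifying $C_\lambda^{(p)}\cong C_\lambda$ via the $p$-th power Frobenius on $k$, one has $F^*\sL_P\cong\sL_{[p]P}$, since $F^*$ is multiplication by $p$ on $\mathrm{Pic}^0$. The $\Gr$-step on $C_\lambda$ supplies the Verschiebung contribution, so the composite $\Gr\circ C^{-1}$ realises $V\circ F=[p]$ on $\mathrm{Pic}^0(C_\lambda)\cong C_\lambda$.

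\textbf{Step 4 (Descent and sign).} The endomorphism $[p]$ commutes with $\sigma$ and thus descends through $\pi$ to a self-map of $\Pl$, which by Steps 1--3 coincides with $\varphi_{\lambda,p}$. The lift $\psi$ of $\varphi_{\lambda,p}$ to $C_\lambda$ is only determined up to composition with $\sigma=[-1]$, i.e.\ $\psi\in\{[p],[-p]\}$, whence $\pi\circ(\pm[p])=\varphi_{\lambda,p}\circ\pi$, as desired.

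The main technical obstacle is Step 2: setting up the equivariant dictionary of Step 1 and verifying the compatibility of the inverse Cartier transform (and of the HN-grading, once the $\sO(m)$-twist is unwound) with the ramified pullback $\pi^*$. Once this compatibility is secured, Step 3 reduces to the classical description of Frobenius on the Picard variety of an elliptic curve, and Step 4 is formal.
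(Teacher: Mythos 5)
The global strategy you outline (lift $\varphi_{\lambda,p}$ through $\pi$ to an endomorphism of $C_\lambda$, identify it, descend) is the paper's strategy, and Steps~1, 2 and~4 are broadly consonant with what the paper does via the submoduli $\Sigma_{\Higgs}\subset M_{\textrm{gr}}$ parametrizing rank-two Higgs bundles $(\sO_{C_\lambda}(P)\oplus\sO_{C_\lambda}(-P),\eta_{2\sigma(P)})$, the moduli morphism $\pi_{\Higgs}$ of Proposition~\ref{moduli interpretation of pi}, and the functorial compatibility $\Gr\circ C^{-1}\circ\pi_{\Par}^* \cong \pi_{\Par}^*\circ\Gr\circ C^{-1}$ of Lemma~\ref{parabolic pullback commutes with flow operator}. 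But Step~3 has a genuine gap, and it is precisely the hard part of the theorem.

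Step~3 treats the object on $C_\lambda$ as the line bundle $(\sL_P,0)$ with zero Higgs field and claims that $\Gr\circ C^{-1}$ on $C_\lambda$ is $V\circ F=[p]$ on $\mathrm{Pic}^0(C_\lambda)$. That is not the situation. The object on $C_\lambda$ is a \emph{rank-two} Higgs bundle with a \emph{nonzero} Higgs field $\eta_{2\sigma(P)}$. Its inverse Cartier transform is a rank-two flat bundle $(V,\nabla)$ whose underlying bundle is neither a Frobenius pullback of $\sO(P)\oplus\sO(-P)$ nor, a priori, of the form $\sO([p]P)\oplus\sO(-[p]P)$; after applying the HN grading the Higgs field is again nonzero, and its zero locus is not controlled by elementary reasoning. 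The assertion that the resulting parameter is $\pm[p]P$ is exactly what is to be proven, and it does not follow formally from $F^*=[p]$ on $\mathrm{Pic}^0$. Moreover your proposal silently assumes that the flow operator maps the section $\Sigma_{\Higgs}\cong C_\lambda$ back into itself; a priori it only lands in the ambient $M_{\textrm{gr}}$, and one must exclude that it falls into $M_0$, $M_2$, or into $M_1$ with a Higgs field vanishing at two distinct points. All of Section~5 of the paper is devoted to this (Propositions~\ref{flow preserves M_1}, \ref{double zero}, \ref{intersection points are fixed}, Corollary~\ref{flow morphsim preserving Sigma}), exploiting the fact that the flow is a \emph{morphism} of schemes, rigidity, and the equivariant/parabolic compatibility. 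Finally, even once $\phi_{\lambda,p}:C_\lambda\to C_\lambda$ is known to be a nonconstant endomorphism, identifying it with $\pm[p]$ is done via Lemma~\ref{arithmetic lemma}, which requires checking that $\phi_{\lambda,p}$ is inseparable of degree $p^2$, fixes $2$-torsion, and (in the ordinary case) is not purely inseparable — the last point resting on the explicit determinant formula involving the Hasse polynomial. None of these verifications is supplied by your Frobenius--Verschiebung heuristic, so the key identification is unsupported.
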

	Note that, as far as the properties of $\varphi_{\lambda,p}$ are concerned, it is irrelevant to determine the sign. The idea of the proof is to give a moduli interpretation of the diagram \eqref{conjecture}. See the diagram \eqref{commutitivity of flow operator}. To each $P\in C_{\lambda}(k)$, we associate the stable logarithmic Higgs bundle $F(P)=(\sO_{C_{\lambda}}(P)\oplus \sO_{C_{\lambda}}(-P),\eta_{2\sigma(P)})$, where the Higgs field
	\begin{equation*}
	\eta_{2\sigma(P)}: \sO_{C_{\lambda}}(P) \to \sO_{C_{\lambda}}(-P)\otimes \Omega_{C_{\lambda}}(D)
	\end{equation*}
	has a unique zero at the involution $\sigma(P)$ of $P$ with multiplicity two. 
	It is easy to see that the submoduli of $F(P)$ keeping the structure is naturally isomorphic to $C_{\lambda}$. Next, we construct a natural transformation between moduli functors which gives rise to a global morphism $C_{\lambda}\to \P^1$ of moduli spaces, which sends the class $[F(P)]$ of $F(P)$ to the class $[E(\pi(P))]$ of $E(\pi(P)):=(\sO_{\Pl} \oplus \sO_{\Pl}(-1),\theta_{\pi(P)})$. The hard part is to show this construction is compatible with the flow operator. But before that one needs first to show the flow operator over $C_{\lambda}$ exists! Interestingly enough, once this has been done, the remaining ingredient for the commutativity is just the functoriality of the flow operator (Lemma \ref{parabolic pullback commutes with flow operator}) (which one would expect to use anyway). To this end, we construct a \emph{flow morphism} (Definition \ref{flow morphism on the whole moduli}) for the whole moduli (of semistable graded logarithmic Higgs bundles) containing $F(P)$, which contains several connected components. Because it is a morphism, instead of being merely a set-theoretical self map, the geometry of the moduli space applies, which indeed allows us to deduce that the action of the flow operator on the whole moduli preserves this submoduli $C_{\lambda}$. Hence we also obtain a self map (Corollary \ref{flow morphsim preserving Sigma})
	$$
	\phi_{\lambda,p}: C_{\lambda}\to C_{\lambda},
	$$
	and simultaneously we establish the key formula (Proposition \ref{decent})
	\begin{equation}\label{key formula}
	\pi(\phi_{\lambda,p}([F(P)]))=\varphi_{\lambda,p}([E(\pi(P))]). 
	\end{equation}
	It is relatively easy to conceive that $\phi_{\lambda,p}=\pm [p]$ (Lemma \ref{arithmetic lemma}). After all, by rigidity lemma (\cite[Proposition 6.1]{MFK}), any nonconstant endomorphism of $C_{\lambda}$ is an isogeny up to translation.

 Now we may proceed to the proof of Theorem \ref{torsion theorem}.
	\begin{proof}
		We may assume $D=0+1+\lambda+\infty$ for some $\lambda\in \mathbb{C}$. Let $(E,\theta)$ be a motivic parabolic Higgs bundle over $(\P^1_{\mathbb{C}},D)$ satisfying the condition in Theorem \ref{main result}. By Theorem \ref{Higgs periodicity theorem}, it is periodic. Because it has equal parabolic weights at any component of $D$, it follows from Proposition \ref{identification of self-maps} that we may simply assume that $(E,\theta_x)=(\sO_{\Pl_{\mathbb{C}}} \oplus \sO_{\Pl_{\mathbb{C}}}(-1),\theta_{x})$, where $x\in \mathbb{C}$ is the unique zero of $\theta$. Let $\pi: C_{\lambda}\to \Pl_{\mathbb{C}}$ be the double cover branching along $D$. Pick $P\in C_{\lambda}$ such that $\pi(P)=x$ and form 
		\begin{equation*}
		F(P)=(\sO_{C_{\lambda}}(P) \oplus \sO_{C_{\lambda }}(-P),\eta_{2\sigma(P)}),
		\end{equation*}
		as we did above in the char $p$ situation. 
		Write $E(\pi(P))$ for $(E,\theta_x)$. Then the formula \eqref{key formula} implies that $F(P)$ is periodic if and only if $E(\pi(P))$ is periodic. 
		But since the flow operator for $F(P)$ in char $p$ is simply $\pm [p]$ after Lemma \ref{arithmetic lemma}, it follows from Pink's theorem \cite[Theorem 5.3]{Pink2004OnTO} and the main theorem of Masser \cite{Masser1989SpecializationsOF} that $P$ has to be torsion (see the proof of \cite[Proposition 2.5]{kshiggs} for more details). 
		Hence our torsion theorem follows. 
	\end{proof}
	To conclude the introduction, we would like to make a generalization of the periodic Higgs conjecture (\cite[Conjecture 4.1]{kshiggs}) as follows: 
  	\begin{conjecture}\label{parabolic PHC}
	Let $C$ be a smooth projective curve over $\mathbb C$, and $D\subset C$ a reduced effective divisor. 
	Then any stable periodic parabolic Higgs bundle over $(C,D)$ is motivic.     
  	\end{conjecture}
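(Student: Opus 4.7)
The conjecture asserts that the converse of Theorem \ref{Higgs periodicity theorem} holds in the parabolic setting, which places it squarely in the realm of the parabolic $p$-adic Simpson correspondence and the Fontaine--Mazur conjecture. My proposed approach is to traverse the Higgs--Galois dictionary in reverse: a stable periodic parabolic Higgs bundle over $(C,D)$ should produce a compatible system of crystalline $\bar\Q_p$-local systems on $C-D$ with local monodromy dictated by the parabolic weights, and one then argues that such a compatible system is of geometric origin, so that its Hodge-graded recovers $(E,\theta)$ as a direct factor of a parabolic Kodaira--Spencer system.

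More concretely, I would first spread out $(E,\theta)$ to $(\mathscr{E},\Theta)$ over an integral base $S$ of finite type over $\Z$, and let $Z \subset S$ be the proper closed subscheme supplied by Definition \ref{globally periodic}. For geometric points $s\in S-Z$, the $f$-periodicity of $(\mathscr{E}_s,\Theta_s)$ with respect to \emph{all} $W_2$-liftings should, by iteratively applying the inverse Cartier transform and the grading functor and climbing the Witt tower in the style of \cite{LS,ksdR}, produce a logarithmic Fontaine--Faltings module with parabolic structure on $(\mathscr{C}_s,\mathscr{D}_s)$. Faltings' parabolic extension of Fontaine--Laffaille theory then attaches a crystalline lisse $\F_{p^f}$-sheaf $\L_p$ on $\mathscr{C}_s - \mathscr{D}_s$, whose monodromy along $\mathscr{D}_s$ is dictated by the parabolic weights; stability of $(E,\theta)$ should force $\L_p$ to be absolutely irreducible.

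Next, one assembles the collection $\{\L_p\}$ into a compatible system of $p$-adic representations of the parabolic fundamental group of $(C,D)$, defined over a number field. The periodic Higgs--de Rham flow machinery of \cite{sshiggs,LS,ksdR} should provide the required independence-of-$p$ compatibilities, at least under a cohomological rigidity hypothesis on $(E,\theta)$ in the sense of Simpson. Once such a compatible system is at hand, a Fontaine--Mazur-style motivicity input --- a crystalline, absolutely irreducible parabolic compatible system with Hodge--Tate weights matching the parabolic filtration arises from a smooth projective family $f:\mathscr{X} \to U \subset C-D$ --- would complete the argument: $(E,\theta)$ would appear, up to Tate twist, as a direct summand of the associated parabolic Gau{\ss}--Manin bundle, and passing to the associated graded identifies it as a direct factor of a parabolic Kodaira--Spencer system.

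The main obstacle is the final motivicity step, which is essentially the Fontaine--Mazur conjecture in a parabolic setting and lies out of reach in full generality. In the rank-two case over $\Q$, the work of Emerton--Kisin on Fontaine--Mazur for $\mathrm{GL}_2$ suggests a realistic partial result; combined with Corlette--Simpson integrality and Drinfeld-type companions, one might handle cohomologically rigid rank-two examples by matching against explicit hypergeometric or Shimura-type families, recovering (and parabolically generalizing) the picture already present in Theorem \ref{torsion theorem}. In higher rank, even pinning down the correct auxiliary hypotheses (rigidity, boundedness of Hodge--Tate weights, prescribed local monodromy) under which the conjecture becomes tractable appears to be the principal difficulty, and promoting an abstract geometric origin to an \emph{honest} direct factor of a Kodaira--Spencer system with parabolic weights matching on the nose will likely demand a delicate Hodge-theoretic comparison.
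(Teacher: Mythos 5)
This statement is labelled a \emph{conjecture} in the paper, and the paper does not prove it; there is therefore no ``paper's own proof'' to compare against. Your proposal is likewise not a proof: as you yourself candidly observe, it terminates at a parabolic Fontaine--Mazur-type motivicity assertion that is out of reach, so the argument as written has an acknowledged and fatal gap at its final step. Sketching a strategy that reduces one open conjecture to another (deeper) one is useful as a roadmap, but it cannot be judged a correct proof.

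That said, the paper does establish some cases of Conjecture~\ref{parabolic PHC}, and your route differs substantially from how those cases are handled. For rank one, Proposition~\ref{PHC for rank one} proceeds entirely geometrically: it reduces to trivial parabolic structure via a cyclic cover and the Biswas--Iyer--Simpson correspondence, invokes the known non-parabolic rank-one result from~\cite{kshiggs} to conclude torsion, and then realizes a torsion line bundle as a direct factor of a pushforward of $(\sO_{C''},d)$ under a further finite \'etale cyclic cover --- no $p$-adic local systems or Fontaine--Laffaille modules appear. For the rank-two case over $(\P^1,0+1+\lambda+\infty)$ with equal parabolic weights, Theorem~\ref{torsion theorem} (via Theorem~\ref{main result}, Pink's theorem, and the Higgs periodicity theorem) pins down the zero of $\theta$ as a torsion point, and the motivicity direction is supplied by Yang--Zuo's explicit construction of families of abelian varieties of $\mathrm{GL}_2$-type (Theorem~\ref{motivicity theorem}), combined with the rank-one result to absorb the twist. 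Both of these are concrete geometric constructions, in contrast to your top-down Galois-theoretic strategy. Your sketch also glosses over some genuine technical obstructions even before the Fontaine--Mazur step: it is not clear that $f$-periodicity in the sense of Definition~\ref{globally periodic} (which only uses $W_2$-liftings and a bounded period at each good prime, with no built-in compatibility across primes) actually produces a Fontaine--Faltings module over the full Witt vectors, nor that one obtains a \emph{compatible system} rather than an unorganized collection of mod-$p$ sheaves; ``climbing the Witt tower'' and ``independence-of-$p$'' are precisely the points where the known periodicity machinery in~\cite{LS,ksdR} does not hand you what you need for free.

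In short: the statement is an open conjecture; your proposal is a plausible but incomplete heuristic that reduces it to a parabolic Fontaine--Mazur conjecture; and the partial results the paper actually proves use a different, direct geometric method (cyclic covers and explicit families) rather than a Galois-theoretic one.
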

   For the rank one case with empty $D$, the conjecture holds, as proved in \cite[Proposition 2.5]{kshiggs}. Here is the extension of that result to the general case.
   \begin{proposition}\label{PHC for rank one}
   Let $(C,D)$ be as in Conjecture \ref{parabolic PHC}. Then for a parabolic Higgs line bundle $(L,\theta)$ over $(C,D)$ is periodic if and only if $\theta=0$ and $L$ is torsion, that is, there exists a positive integer $m$ such that $L^{\otimes m}\cong \sO_C$ as parabolic line bundles. Moreover, all rank one parabolic Higgs bundles are motivic.  
   \end{proposition}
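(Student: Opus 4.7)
I would split the argument into three parts: (a) periodicity forces $\theta = 0$ and $L$ torsion; (b) the converse; (c) motivicity of such pairs.

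For (a), Definition \ref{globally periodic} requires any periodic parabolic Higgs bundle to be graded. For rank one the grading is concentrated in a single degree while $\theta : L \to L \otimes \Omega_C(D)$ would have to shift it by one; since the adjacent graded piece is zero, this forces $\theta = 0$. With $\theta=0$ the Higgs-de Rham flow reduces to Frobenius pullback, $(L,0) \mapsto (F^{*}L,0)$, because the Harder-Narasimhan filtration of a line bundle is trivial and the inverse Cartier transform of a Higgs bundle with vanishing Higgs field is $F^{*}L$ endowed with the canonical connection. Consequently $f$-periodicity of the mod-$p$ reduction amounts to $F^{*f} L_s \cong L_s$, i.e.\ $L_s$ is a $(p^f-1)$-torsion element of the parabolic Picard group $\mathscr{P}_s$ of $(\mathscr{C}_s,\mathscr{D}_s)$. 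The scheme $\mathscr{P}$ is a semiabelian variety, an extension of $\Pic^{0}(C)$ by a torus recording the parabolic weights at $D$, so I would spread $L$ out to a section $\mathcal{L}$ of a relative model $\mathscr{P}_S \to S$ over an arithmetic base $S$; the hypothesis says $\mathcal{L}_s$ is $(p^f-1)$-torsion for every geometric $s \in S-Z$. Invoking Pink's theorem \cite[Theorem 5.3]{Pink2004OnTO} on torsion in semiabelian varieties together with Masser's specialization theorem \cite{Masser1989SpecializationsOF}, exactly as in the proof of \cite[Proposition 2.5]{kshiggs}, then forces $L$ to be a torsion point of $\mathscr{P}(\mathbb{C})$.

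For (b), if $L^{\otimes m}\cong \sO_C$ parabolically, take $S=\Spec \Z[1/m]$ and $f=\varphi(m)$; by Euler's theorem $p^f\equiv 1\pmod{m}$ for every residue characteristic $p$ on $S$, so $L_{s}^{\otimes p^f}\cong L_s$ uniformly in $s$, verifying Definition \ref{globally periodic}. For (c), such a torsion parabolic line bundle $(L,0)$ of order $m$ corresponds via its monodromy character to a cyclic \'etale cover $\rho\colon C'\to C$ of degree $m$ tamely ramified along $D$; the parabolic Kodaira-Spencer system of $\rho$ decomposes into Galois-eigenlines, one of which is $(L,0)$, so $(L,0)$ occurs as a direct summand of a parabolic Kodaira-Spencer system and is motivic. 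Together with (a) and (b) this establishes the rank-one case of Conjecture \ref{parabolic PHC}.

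The main obstacle I anticipate is the specialization step in (a): one must verify that the parabolic Picard scheme is a semiabelian variety over a suitable $\Z[1/N]$ in a form compatible with Pink-Masser, and treat the contribution of the parabolic-weight torus carefully. This is the genuine generalization of \cite[Proposition 2.5]{kshiggs} from the case $D=\emptyset$.
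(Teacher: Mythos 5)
Your observation in part (a) that the grading requirement in Definition \ref{globally periodic} forces $\theta=0$ in rank one is correct and is actually a cleaner entry point than what the paper uses. Part (b) is also essentially the paper's argument.

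The serious gap is in the rest of part (a). You propose to run the Pink--Masser argument directly on a ``parabolic Picard scheme'' which you assert is a semiabelian variety, an extension of $\Pic^0(C)$ by a torus recording the parabolic weights. This is not correct: for a fixed bound $N$ on the denominators, the parabolic weight data form a finite discrete group $(\tfrac{1}{N}\Z/\Z)^{|D|}$, so the parabolic Picard group is an extension of a finite group by $\Pic(C)$, not a semiabelian variety; and without fixing $N$ there is no algebraic group at all. Moreover, the flow operator changes the quasi-parabolic structure ($\alpha_i\mapsto\langle p\alpha_i\rangle$, with a correcting twist by $\sO(\sum_i\lfloor p\alpha_i\rfloor D_i)$ on the underlying bundle), so even setting up the dynamical system on a single ``parabolic Picard'' scheme requires care. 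The paper sidesteps all of this by a reduction: pick a cyclic cover $\pi\colon C'\to C$ branched along $D$ that trivializes the parabolic structure, so $(M,\eta)=\pi_{\Par}^*(L,\theta)$ has trivial parabolic data; then Lemma \ref{parabolic pullback commutes with flow operator} shows $(M,\eta)$ is periodic on $C'$ (with empty parabolic divisor), and \cite[Proposition 2.5]{kshiggs} applies verbatim to give $\eta=0$ and $M$ torsion. One then descends via the BIS correspondence to conclude $L$ is parabolic torsion. Your proposal essentially tries to re-prove \cite[Proposition 2.5]{kshiggs} in the parabolic setting from scratch, whereas the paper's trick is to reduce to the already-known non-parabolic case --- both cleaner and avoids the false semiabelian claim.

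Part (c) is also rougher than the paper's proof. You use a single cyclic cover of degree $m$ ``corresponding to the monodromy character of $L$.'' But the paper needs a \emph{two-stage} cover: first $\pi\colon C'\to C$ to kill the parabolic structure, then a genuinely \'etale cyclic cover $\pi'\colon C''\to C'$ to trivialize the (now ordinary) torsion line bundle $M=\pi_{\Par}^*L$. In general a single degree-$m$ cover branched over $D$ trivializes the parabolic weights (since $m\alpha_i\in\Z$) but need not trivialize the pullback $\rho_{\Par}^*L$ as a bundle on $C'$, so the eigenline decomposition you invoke doesn't immediately produce $(L,0)$ as a direct summand. The paper instead builds $(M,\nabla)[G]=\bigoplus_{g\in G}g^*(M,\nabla)$ with its $G$-action, pushes forward to $L^{\oplus|G|}$ with a parabolic connection and the trivial filtration, and recognizes this as a motivic parabolic de Rham bundle, whose grading splits off $(L,0)$.
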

   \begin{proof}
   Let $L$ be a torsion parabolic line bundle viz. $L^{\otimes m}\cong \sO_C$. Take a spread-out $(\sC,\sD,\sL)/S$ of $(C,D,L)/\mathbb C$. For any geometric closed point $s\in S$ with $\mathrm{char}(k(s))$ large enough, it holds that
   $$
   \mathrm{Gr}_{Fil_{tr}}\circ C^{-1}_{\Par}(\sL_s,0)\cong (\sL_s^{\otimes p},0),
   $$
and hence $(\mathrm{Gr}_{Fil_{tr}}\circ C^{-1}_{\Par})^{\phi(m)}(\sL_s,0)\cong (\sL_s,0)$. So $(L,0)$ is periodic. Conversely, one takes a suitable cyclic cover $\pi: C'\to C$ whose Galois group is $G$ and whose branch locus contains $D$, such that $(M,\eta):=\pi_{\Par}^{*}(L,\theta)$ has the trivial parabolic structure. It follows from Lemma \ref{parabolic pullback commutes with flow operator} (see \S2) that $(M,\eta)$ is a periodic logarithmic Higgs bundle over $C'$. By \cite[Proposition 2.5]{kshiggs}, $M$ is torsion and $\eta=0$. It follows that $\theta=0$, and there is some $m$ such that one has an isomorphism of $G$-equivariant line bundles:
$$
\pi_{\Par}^{*}(L^{\otimes m})\cong (\pi_{\Par}^{*}L)^{\otimes m}\cong \sO_{C'}.
$$
Taking more tensor powers, we may even assume that in the above isomorphism, $\sO_{C'}$ is equipped with the trivial $G$-structure. It follows that
$$
L^{\otimes m}\cong \pi_{\Par*}(\sO_{C'})\cong \sO_{C}
$$
as parabolic line bundles. 

Finally, we show the motivicity for a torsion parabolic line bundle. As $M$ is torsion, there is a finite \'etale cyclic cover $\pi': C''\to C'$ so that $\pi'^*M\cong \sO_{C''}$. Taking the invariant part of $\pi'_{*}(\sO_{C''},d)$ under the cyclic group action, we obtain a connection $(M,\nabla)$ over $C'$. The direct sum $(M,\nabla)[G]:=\oplus_{g\in G}g^*(M,\nabla)$ admits a natural $G$-action. Notice that, as $M$ is $G$-equivariant, there is an isomorphism of $G$-equivariant bundles $\oplus_{g\in G}g^*M\cong M^{\oplus |G|}$. Therefore, $\pi_{\Par*}\oplus_{g\in G}g^*M\cong L^{\oplus |G|}$. Consequently, there is a parabolic connection $\nabla$ on $L^{\oplus |G|}$. Clearly, $(L^{\oplus |G|},\nabla,Fil_{tr})$ is a motivic parabolic de Rham bundle. Therefore $(L^{\oplus |G|},0)=\Gr_{Fil_{tr}}(L^{\oplus |G|},\nabla)$ is motivic. So $(L,0)$ is motivic too. 
\end{proof}
For a rank two Higgs bundle $(E,\theta)$ over $(\P_{\mathbb C}^1,0+1+\lambda+\infty)$ considered above, we equip it with a parabolic structure along $0+1+\lambda+\infty$ with equal parabolic weights at each point and with parabolic degree zero. By abuse of notation, the resulting parabolic Higgs bundle is denoted by $(E,\theta)$. Then the periodic Higgs conjecture predicts that once the zero of $\theta$ is torsion in the sense of Theorem \ref{torsion theorem}, it must be motivic. In a very recent paper \cite{YZ}, Jinbang Yang and Kang Zuo proved the following deep result (\cite[Theorem 1.6]{YZ}).
 \begin{theorem}[Yang-Zuo]\label{motivicity theorem}
Notation as above. Suppose the parabolic structure is non-trivial only at one point. Then $(E,\theta)$ is motivic.
 \end{theorem}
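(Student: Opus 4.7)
The plan is to construct an explicit motivic origin by pulling back the problem to the elliptic curve $C_\lambda$ via the double cover $\pi\colon C_\lambda\to \P^1_{\mathbb C}$, realizing the motivic structure there using the torsion of the zero of $\theta$, and then descending back through $\pi$.

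Numerically, since $E=\sO\oplus\sO(-1)$ has degree $-1$, the conjunction of parabolic degree zero, equal weights at each component of $D$, and non-triviality at exactly one point forces the unique nonzero weight to equal $1/2$. Under $\pi_{\Par}^{*}$, a weight $1/2$ at a ramification point of index two doubles to $1\equiv 0\pmod{\Z}$, so $\pi_{\Par}^{*}(E,\theta)$ is an honest unramified logarithmic Higgs bundle on $C_\lambda$ whose Higgs field vanishes at $P$ and $\sigma(P)$, where $P$ is the torsion point with $\pi(P)=x$ furnished by Theorem \ref{torsion theorem}. Because direct factors of motivic parabolic Higgs bundles are motivic (Definition 3.4 of \cite{ksdR}) and $\pi_{\Par*}$ preserves motivicity, it would suffice to realize $\pi_{\Par}^{*}(E,\theta)$ as a $\sigma$-equivariant direct summand of a logarithmic Kodaira-Spencer system over $C_\lambda$.

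To produce such a summand, I would exploit the torsion order $m$ of $P$. The isogeny $[m]\colon C_\lambda\to C_\lambda$ sends $P$ to the origin, and further pullback along $[m]$ produces a Higgs bundle on $C_\lambda$ whose Higgs zeros form a full torsion subgroup. A natural candidate for its motivic origin is a Kuga-Sato type family over $C_\lambda$ obtained by pulling the universal elliptic curve back through a modular map $C_\lambda\to X(N)$ determined by the torsion data, or equivalently a family of abelian surfaces isogenous to $C_\lambda\times C_\lambda$. Taking $\Z/m\Z\times \Z/2\Z$-invariants should then descend the construction through $[m]$ and then through $\pi$ to $(\P^1_{\mathbb C},D)$, yielding a parabolic Kodaira-Spencer system whose relevant direct summand recovers $(E,\theta)$ together with its prescribed parabolic weights.

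The principal obstacle is the explicit geometric realization: one must exhibit a family whose parabolic Kodaira-Spencer contains the given Higgs bundle as a direct factor with matching parabolic weights at all four points of $D$, and with the weight $1/2$ at the distinguished point arising from a controlled degeneration there rather than at the other three. The torsion of $P$ is indispensable here, as it constrains the moduli data to be defined over a number field and provides the finite étale cover on which descent obstructions vanish. I would expect the actual argument of Yang-Zuo in \cite{YZ} to proceed via a more refined construction, presumably building the underlying rigid rank two local system on $\P^1_{\mathbb C}\setminus D$ via Katz's middle convolution and then recognizing the resulting variation of Hodge structure as a summand of the cohomology of an explicit family of curves or abelian varieties.
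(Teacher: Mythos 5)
The paper itself offers no proof of this statement: Theorem \ref{motivicity theorem} is imported verbatim from \cite[Theorem~1.6]{YZ}, and the authors merely remark that ``$(E,\theta)$ arises from families of Abelian varieties by Yang-Zuo's construction.'' So there is no internal argument to compare against. Judging from the title and abstract of \cite{YZ}, the actual argument goes through $p$-adic Hodge theory and the Langlands correspondence to manufacture a $GL_2$-type abelian scheme over $\P^1\setminus D$ with the prescribed degenerations; this is of an entirely different character from the geometric sketch you propose.

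Your proposal is not a proof; it is a strategy outline, and the gap is located precisely at the theorem's hardest point. The numerics at the start (the forced weight $1/2$, the fact that $\pi_{\Par}^*$ kills the parabolic structure) are correct and agree with Lemma~\ref{four fibers}. But after that you write ``it would suffice to realize $\pi_{\Par}^*(E,\theta)$ as a $\sigma$-equivariant direct summand of a logarithmic Kodaira-Spencer system over $C_\lambda$,'' and you do not exhibit such a system. The ``Kuga-Sato type family obtained by pulling the universal elliptic curve back through a modular map $C_\lambda\to X(N)$'' is named but not constructed; you do not check that its Kodaira-Spencer map has the required zero divisor $P'+\sigma(P')$, nor that the degeneration data at the four points of $B$ produce a \emph{trivial} parabolic structure on $C_\lambda$ that descends to exactly weight $1/2$ at the distinguished point of $D$ and weight $0$ at the other three. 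Nor is it clear that $[m]^*$ followed by taking $(\Z/m\times\Z/2)$-invariants lands you back in the category of parabolic Kodaira-Spencer systems over $(\P^1,D)$: for $\pi_{\Par*}$ to ``preserve motivicity'' you would need the family over $C_\lambda$ itself to be $G$-equivariant and to descend to (or be dominated by one over) an open of $\P^1\setminus D$, which is an additional geometric input you do not supply. You candidly concede all of this in your last paragraph. In short, the reduction steps are plausible, but the content of the theorem --- producing the motivic family --- is left as the ``principal obstacle,'' which is exactly why the result is attributed to \cite{YZ} as a deep theorem rather than derived within the present paper.
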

In fact, $(E,\theta)$ arises from families of Abelian varieties by Yang-Zuo's construction. Combining Theorem \ref{motivicity theorem} with Proposition \ref{PHC for rank one}, Yang-Zuo's motivicity theorem extends to  arbitrary equal parabolic weights. In other words, Conjecture \ref{parabolic PHC} indeed holds in this special case.
 
The paper is organized as follows. In \S2, we lay down basics of the theory of parabolic Higgs-de Rham flow, to the extent so that the flow operator $\varphi_{\lambda,p}$ of Sun-Yang-Zuo can be reinterpreted via this theory. \S3-\S5 form the main body of the paper, which gives the diagram \eqref{conjecture} a moduli interpretation. As a result, we obtain the commutative diagram \eqref{commutitivity of flow operator}. In \S6, we prove Conjecture \ref{SYZ conj} by furnishing the proof that the flow operator $\phi_{\lambda,p}$ over the elliptic curve $C_{\lambda,p}$ is indeed the multiplication by $p$ map, up to sign. 
	The efforts towards understanding the flow operator $\varphi_{\lambda,p}$ of Sun-Yang-Zuo allow us to obtain a clean classification result in \S7. 
	Their isomorphism classes form the set of \emph{periodic points} in the corresponding moduli spaces of graded parabolic Higgs bundles. 
	We carry out some rudimentary study of its distribution in the moduli.

	\textbf{Acknowledgement:} This paper owes its existence to the lectures on the work \cite{SYZ} and its subsequent development, delivered by Kang Zuo in November 2021 at USTC. 
	We would like to thank him heartily for his constant encouragement and support. Our thanks also go to members of the USTC algebraic geometry team, especially Jinbang Yang, whose comments and suggestions are greatly appreciated. In particular, he pointed out a serious error in our first calculation of the parabolic pushforward of $(\sF,\eta)[G]$ (see \S4), which is highly valuable to us. We thank Jiangwei Xue for helpful discussions on isogenies of elliptic curves in positive characteristic, especially providing us a proof of Lemma \ref{arithmetic lemma} in the supersingular case. 
	Last but not least, we thank Shing-Tung Yau, who drew our attention to the work \cite{Doran2001AlgebraicAG}.
	
	\section{Parabolic Higgs-de Rham flow}
	Let $k$ be an algebraically closed field. Let $C$ be a smooth projective curve over $k$, equipped with a reduced effective divisor $D=\sum_iD_i$. 
	A parabolic connection (resp. Higgs bundle) $V$ over $(C,D)$ is a pair $(V,\nabla)$ (resp. $(E,\theta)$), where $V$ (resp. E) is a parabolic vector bundle over $(C,D)$, and $\nabla$ (resp. $\theta$) is logarithmic connection (resp. logarithmic Higgs field) over $V_0$ (resp. $E_0$) preserving the parabolic filtration on $V$ (resp. $E$). 
	See \cite[Definition 2.7]{ksdR} and the discussion following it. 
	They form tensor categories. 
	It is useful to observe that there is an obvious isomorphism in the category of parabolic vector bundles over $(C,D)$
	\begin{equation}\label{tensor product}
	V(-\sum_i\alpha_iD_i) \cong  V\otimes \sO_{C}(-\sum_i\alpha_iD_i),
	\end{equation}
	where $V(-\sum_i\alpha_iD_i)$ is defined in \cite[Example 2.5]{ksdR}, $V$ is a parabolic vector bundle equipped with the trivial parabolic structure \cite[Example 2.4]{ksdR}. 

	Let us focus on the case $\textrm{char}(k)=p>0$ for a while, so that we may speak about a parabolic Higgs-de Rham flow over $(C,D)/k$. 
	For that, we also need to fix a $W_2(k)$-lifting of the pair $(C,D)$. In \cite[Theorem 2.10]{ksdR}, the parabolic inverse Cartier transform $C_{\textrm{par}}^{-1}$ is constructed:
	\begin{equation*}
	C_{\textrm{par}}^{-1}: \textrm{HIG}^{\textrm{par}}_{p-1,N}((C,D)/k)\to \textrm{MIC}^{\textrm{par}}_{p-1,N}((C,D)/k), 
	\end{equation*}
	where $N$ is a positive integer coprime to $p$, $\textrm{HIG}^{\textrm{par}}_{p-1,N}((C,D)/k)$ is the category of parabolic Higgs bundles over $(C,D)/k$ which are nilpotent of exponent $\leq p-1$ and whose parabolic structures are supported in $D$ and whose weights are contained in $\frac{1}{N}\Z$, and $\textrm{MIC}^{\textrm{par}}_{p-1,N}((C,D)/k)$ is the category of adjusted \footnote{The adjusted condition is defined in \cite[Definitiion 2.9]{ksdR}, which generalizes \cite[Definition 3.2]{IS} in characteristic zero. 
	We remark the following: 
	Let $(C,D)$ be defined over $\mathbb C$, and $(V^0,\nabla^0)$ be a flat connection defined over the complement $C-D$ with quasi-unipotent local monodromies. 
	Then the Deligne's canonical extension $(V,\nabla)$ of $(V^0,\nabla^0)$ is an adjusted parabolic connection by \cite[Lemma 3.3]{IS}. 
	Let $(\sC,\sD,\sV,\nabla)/S$ be a spread-out of $(C,D,V,\nabla)$. 
	Then away from a proper closed subset $Z\subset S$, the reduction of $(\mathcal C,\mathcal D,\mathcal V,\nabla)$ over $s\in S-Z$ is adjusted.} parabolic flat bundles over $(C,D)/k$ whose $p$-curvatures as well as the nilpotent part of residues are nilpotent of exponent $\leq p-1$ and whose parabolic structures are supported in $D$ and whose weights are contained in $\frac{1}{N}\Z$. 
	It is an equivalence of categories. Here is a reminder: Consider the following Cartesian diagram:
	$$
 \xymatrix{C'\ar[r]^-{F_k}\ar[d]_-{}&C\ar[d]^-{}\\
		k\ar[r]^-{F_k}&k,}
  $$
where $F_k: k\to k$ is the absolute Frobenius. Then the functor $C^{-1}$ as above differs by an obvious base change functor from the original one of Ogus-Vologodsky \cite{OV} (for $D=\emptyset$) , which sends a Higgs module over $C'$ to a module with connection over $C$.     
 
	Next, we shall say a few words about the grading functor in the parabolic setting. In \cite[Lemma 3.2]{ksdR}, (a version of) the grading functor is defined, which is a functor from the category of parabolic de Rham bundles over $(C,D)/k$ (\cite[Definition 3.1]{ksdR}) to the category of parabolic graded Higgs bundles over $(C,D)/k$. The point is that the filtration $Fil$ is defined \emph{only} on $V_0$, which induces a filtration on each term in the parabolic filtration of $V$ (which is called the \emph{induced filtration} in \cite[Lemma 3.2]{SW}). 
	Recall the following 
	\begin{definition}[Definition 3.3 \cite{SW}]\label{defn grsemistable}
	Let $(V,\nabla)$ be a parabolic connection over $(C,D)/k$. 
	A filtration $Fil$ over $(V,\nabla)$ is gr-(semi)stable, if it is a saturated and Griffiths transverse filtration of finite level over $(V_0,\nabla)$ such that the associated graded parabolic Higgs bundle $\Gr_{Fil}(V,\nabla)$ is (semi)stable.  
	\end{definition}
	The following is an existence and uniqueness result about gr-semistable filtrations. The existence part is a special case of \cite[Proposition 3.4]{SW}. 
	The uniqueness part generalizes \cite[Corollary 4.2]{Sim08},\cite[Corollary 5.6]{Lan14}, \cite[Lemma 7.1]{sshiggs}.
	\begin{proposition}\label{grsemistable}
	Let $k$ be an algebraically closed filed (with arbitrary characteristic). Let $(V,\nabla)$ be a semistable parabolic connection over $(C,D)/k$. Then the following statements hold:
	\begin{itemize}
    \item [(i)] There exists a gr-semistable filtration $Fil$ over $(V,\nabla)$ in the sense of Definition \ref{defn grsemistable}. 
    \item [(ii)] Let $Fil_i,i=1,2$ are gr-semistable filtrations on $(V,\nabla)$. Then $\Gr_{Fil_1}(V,\nabla)$ is $S$-equivalent to $\Gr_{Fil_2}(V,\nabla)$ (as parabolic Higgs bundle). Moreover, a gr-stable filtration on $(V,\nabla)$ is unique up to a shift of indices. 
	\end{itemize}
	\end{proposition}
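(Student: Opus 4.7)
Part (i) is literally a special case of \cite[Proposition 3.4]{SW}, so only part (ii) requires work. The plan is to deploy the classical Rees-module / deformation-to-graded technique in the parabolic positive-characteristic context, in the style of Simpson and Langer.

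Given a gr-semistable filtration $Fil$ on the semistable parabolic connection $(V,\nabla)$, I would form the Rees parabolic $\lambda$-connection $\mathcal{R}_{Fil}(V,\nabla)$ over $(C\times \A^1_k,D\times \A^1_k)/k$, flat over $\A^1_k$, whose fiber over $\lambda\neq 0$ is $(V,\lambda\nabla)$ and whose central fiber at $\lambda=0$ is $\Gr_{Fil}(V,\nabla)$ as a parabolic Higgs bundle. The gr-semistability of $Fil$ ensures that every member of the family is semistable. Given two such filtrations $Fil_1,Fil_2$, the corresponding Rees families coincide over $\A^1_k\setminus\{0\}$ after the canonical $\Gm$-rescaling, and specialize at $\lambda=0$ to $\Gr_{Fil_1}(V,\nabla)$ and $\Gr_{Fil_2}(V,\nabla)$ respectively. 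Invoking the separatedness of the good moduli space of semistable parabolic Higgs/$\lambda$-connection bundles with the fixed numerical invariants (equivalently, uniqueness of the polystable representative of an $S$-equivalence class), the two central fibers must be $S$-equivalent.

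For the uniqueness up to shift in the gr-stable case, I would first argue that gr-stability of $Fil$ upgrades to stability of $(V,\nabla)$ itself: any proper $\nabla$-stable parabolic subbundle $W\subset V$ inherits a Griffiths-transverse filtration from $Fil$, so $\Gr(W)$ is a sub-Higgs bundle of $\Gr_{Fil}(V,\nabla)$; since $\pardeg \Gr=\pardeg$, stability of the graded object yields the strict slope inequality for $W$. Hence $\End(V,\nabla)=k\cdot\id$. Now given gr-stable $Fil_1,Fil_2$, the $S$-equivalence established above upgrades to a genuine isomorphism $\phi\colon \Gr_{Fil_1}(V,\nabla)\xrightarrow{\sim}\Gr_{Fil_2}(V,\nabla)$. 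Choosing splittings of each filtration, $\phi$ lifts to a bundle isomorphism $\tilde\phi\colon V\to V$ sending $Fil_1^i$ to $Fil_2^{i+c}$ for a uniform shift $c$, and a standard Simpson-style successive-approximation correction by strictly lower-order terms adjusts $\tilde\phi$ to commute with $\nabla$. Simplicity of $(V,\nabla)$ then forces $\tilde\phi$ to be a scalar, and hence $Fil_1^i=Fil_2^{i+c}$ for every $i$.

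The principal obstacle is justifying the separatedness of the moduli of semistable parabolic $\lambda$-connections in the present generality, which underwrites the $S$-equivalence half of the argument. In characteristic zero this is classical, but here one must invoke Langer's moduli theory \cite{Lan14} together with the parabolic refinements of \cite{SW}, and simultaneously verify that the Rees family lies entirely inside the semistable locus of this moduli. The successive-approximation lift in the gr-stable case is formally straightforward once $(V,\nabla)$ is known to be simple, but transplanting it to the parabolic setting (so that all corrections preserve the parabolic structure) needs mild care.
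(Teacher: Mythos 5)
Your proposal follows essentially the same route as the paper: the paper also reduces part (i) to \cite[Proposition 3.4]{SW}, proves $S$-equivalence by viewing the graded as the $t\to 0$ limit of the Rees bundle $\xi(V,Fil)$ inside the separated moduli space $M_\Lambda$ of semistable parabolic $\lambda$-connections (applying the valuative criterion), and for uniqueness of a gr-stable filtration invokes verbatim the Simpson-style successive-approximation argument of \cite[Lemma 7.1]{sshiggs}, which is precisely the lift-and-correct scheme you outline. The subtlety you flag about separatedness of the parabolic $\lambda$-connection moduli in positive characteristic is real but is simply asserted in the paper as a known consequence of Langer's theory, so no genuine gap separates the two arguments.
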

	\begin{proof}
	For (i), we refer our reader to \cite[Proposition 3.4]{SW}, which adapts the proof of \cite[Theorem A.4]{sshiggs} to the parabolic setting. For (ii), one considers the moduli space $M_{\Lambda}$, corepresenting the moduli functor $\sM_{\Lambda}$ of semistable parabolic $\lambda$-connections over $(C,D)/k$. It is separated over $k$. There is one canonical $\Gm$-action on $M_{\Lambda}$: For $t\in \Gm(k)$ and $(E,\nabla)$, a parabolic $\lambda$-connection, the action is given by 
 $$
 [(E,\nabla)]\mapsto [(E,t\nabla)].
 $$
 For the connection $(V,\nabla)$ in the statement, which defines a point $x$ in the moduli, the $\Gm$-action defines an object $(\sV:=\textrm{pr}_C^*V,t\nabla)$ in $\sM_{\Lambda}(\Gm)$, whose associated moduli map $\phi: \Gm\to M_{\Lambda}$ has the initial value $\phi(1)=x$. Now we pick any gr-semistable filtration $Fil$ by (i). Claim: $[\lim_{t\to 0}(V,t\nabla)]=[\Gr_{Fil}(V,\nabla)]$. Then by the separatedness of the moduli, it follows that
 $$
 [\Gr_{Fil_1}(V,\nabla)]=[\Gr_{Fil_2}(V,\nabla)]
 $$
 for any two gr-semistable filtrations $Fil_i, i=1,2$, that is, they are $S$-equivalent. Note that the grading is nothing but the fiber over $0\in \A^1(k)$ of the Rees bundle 
 $$
 \xi(V,Fil)=\sum_i t^{-i}Fil^i\otimes \sO_{C_{\A^1}}\subset \textrm{pr}_C^*V,
 $$ 
equipped with the $t$-connection $t\nabla$. However, over the open part $\Gm\subset \A^1$, as $t$ is a unit, $(\xi(V,Fil)|_{\Gm},t\nabla)$ is naturally isomorphic to $(\sV,t\nabla)$. The claim is proved using the valuative criterion of separatedness. The proof for the uniqueness of a gr-stable filtration in Lemma \cite[Lemma 7.1]{sshiggs} works verbatim in the parabolic setting. So we omit the detail.
 \end{proof}		
	Following the procedure carried in \cite[Theorem A.4, Remark A.9]{sshiggs}, we obtain a \emph{unique} gr-semistable filtration, which we call the \emph{Simpson filtration} (denoted as $Fil_S$). 
	Remark that the Simpson filtration in rank two case is nothing but the Harder-Narasimhan (HN) filtration. 

	After constructing the functors $C^{-1}_{\textrm{par}}$ and $\textrm{Gr}_{Fil}$, one may define the notion of a periodic parabolic Higgs-de Rham flow over $(C,D)/k$. 
	We refer our reader to \cite[Definition 4.1]{ksdR} for a precise formulation. 
	A periodic parabolic Higgs bundle over $(C,D)/k$ is a graded parabolic Higgs bundle over $(C,D)/k$ initializing a perodic parabolic Higgs-de Rham flow. 
	The next is a rigidity result.
	\begin{proposition}\label{rigidity}
	Let $(E,\theta)$ be a periodic parabolic Higgs bundle over $(C,D)/\mathbb C$ in the sense of Definition \ref{globally periodic}. 
	Suppose it is stable. 
	Let $(\sC,\sD,\sE,\Theta)/S$ be a spread-out. 
	Then there exists a proper closed subset $Z\subset S$ and a positive integer $f$ such that for any geometric point $s\in S-Z$ and any $W_2(k(s))$-lifting $\tilde s: \Spec(W_2(k(s)))\to S-Z$ of $s$,
	\begin{equation*}
	(\Gr_{Fil_S}\circ C^{-1}_{\textnormal{par},s\subset \tilde s})^{f}(\sE_s,\Theta_s) \cong (\sE_s,\Theta_s).
	\end{equation*}
	\end{proposition}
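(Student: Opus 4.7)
The plan is to derive the proposition by combining Definition \ref{globally periodic}, the uniqueness of gr-semistable gradings up to $S$-equivalence in Proposition \ref{grsemistable}(ii), and the stability hypothesis on $(E,\theta)$. I would first invoke Definition \ref{globally periodic} on the given spread-out to obtain an integer $f$ and a proper closed subset $Z_0 \subset S$ such that, for every geometric point $s \in S - Z_0$ and every $W_2(k(s))$-lifting $\tilde s$, there exist gr-semistable filtrations $Fil_1, \dots, Fil_f$ along the flow making the composition $\Gr_{Fil_f}\circ C^{-1}_{\textnormal{par},s\subset \tilde s}\circ\cdots\circ \Gr_{Fil_1}\circ C^{-1}_{\textnormal{par},s\subset \tilde s}$ send $(\sE_s, \Theta_s)$ back to itself. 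Since the stability of $(E,\theta)$ at the generic fiber of $S$ is an open condition in the family $(\sE,\Theta)/S$, after possibly enlarging $Z_0$ to a proper closed $Z \supset Z_0$ we may additionally assume $(\sE_s,\Theta_s)$ is stable for every geometric $s \in S-Z$.

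The central step is an inductive comparison of the chosen flow $(E_i,\theta_i)$ with the Simpson flow $(\tilde E_i,\tilde\theta_i)$, both initialized at $(\sE_s,\Theta_s)$ and using the same $W_2$-lifting $\tilde s$. The claim I would prove by induction on $i$ is that $(\tilde E_i,\tilde\theta_i)$ and $(E_i,\theta_i)$ represent the same $S$-equivalence class in the moduli of semistable parabolic Higgs bundles. The inductive step combines two inputs: the parabolic inverse Cartier transform $C^{-1}_{\textnormal{par}}$ is an equivalence of categories (Theorem 2.10 of \cite{ksdR}) and hence preserves $S$-equivalence of semistable objects, while Proposition \ref{grsemistable}(ii) guarantees that the gradings induced by any two gr-semistable filtrations on a given semistable parabolic flat bundle are $S$-equivalent. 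At $i = f$ the chosen flow returns to the stable object $(\sE_s,\Theta_s)$, whose $S$-equivalence class consists of a single isomorphism class, so $(\tilde E_f,\tilde\theta_f) \cong (\sE_s,\Theta_s)$, which is exactly the conclusion.

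The principal technical obstacle lies in establishing rigorously that $C^{-1}_{\textnormal{par}}$ preserves $S$-equivalence among semistable parabolic objects in the categories $\HIG^{\Par}_{p-1,N}$ and $\MIC^{\Par}_{p-1,N}$. Since $C^{-1}_{\textnormal{par}}$ is an equivalence of categories, it sends subobjects to subobjects and quotients to quotients, so a Jordan--H\"older filtration of $(E_i,\theta_i)$ maps to a filtration of $C^{-1}_{\textnormal{par}}(E_i,\theta_i)$ with the expected quotients; what requires care is checking that slopes, parabolic weights, nilpotence of the $p$-curvature and residues, and the adjusted condition are all compatible, so that the resulting filtration is genuinely Jordan--H\"older for semistability of adjusted parabolic flat bundles. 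Once this compatibility is in place, the inductive $S$-equivalence argument above completes the proof.
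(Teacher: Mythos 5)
Your proposal takes a genuinely different route from the paper, and it contains a gap at the inductive step. The paper's proof is short: it invokes the parabolic analogue of \cite[Lemma 2.4]{LS}, which shows that \emph{every intermediate Higgs term} in a periodic flow initialized at a stable parabolic Higgs bundle is itself stable. Once this is known (and combined with openness of stability over the spread-out), the filtrations appearing along the flow are automatically gr-\emph{stable}, and the last part of Proposition \ref{grsemistable}(ii) --- uniqueness of a gr-stable filtration up to a shift of indices --- pins down the Simpson filtration \emph{exactly}, so the Simpson flow coincides with the given periodic flow at the level of isomorphism classes at every step, with no appeal to $S$-equivalence.

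Your argument instead tries to run an induction on $S$-equivalence classes, and this is where the gap lies. At step $i\geq 2$ you have $(E_{i-1},\theta_{i-1})$ and $(\tilde E_{i-1},\tilde\theta_{i-1})$ that are $S$-equivalent but a priori not isomorphic. Applying $C^{-1}_{\textnormal{par}}$ then produces two \emph{different} parabolic flat bundles $(V_{i-1},\nabla_{i-1})$ and $(\tilde V_{i-1},\tilde\nabla_{i-1})$. Proposition \ref{grsemistable}(ii), as stated and as you cite it, compares two gr-semistable filtrations on \emph{one and the same} $(V,\nabla)$; it does not directly assert that gradings of gr-semistable filtrations on two merely $S$-equivalent flat bundles are $S$-equivalent. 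To make your induction close, you would need to extract from the \emph{proof} of Proposition \ref{grsemistable}(ii) (the $\Gm$-limit and the separatedness argument) the stronger statement that the $S$-equivalence class of the grading depends only on the $S$-equivalence class of the flat bundle, and you would additionally need to justify that $C^{-1}_{\textnormal{par}}$ preserves $S$-equivalence (your own proposal flags this as the main technical obstacle but does not resolve it). Both of these auxiliary claims become unnecessary once one knows the intermediate terms are stable --- which is exactly what the paper's citation of \cite[Lemma 2.4]{LS} supplies, and which your argument never invokes. So the cleanest repair is to bring in that stability lemma, at which point $S$-equivalence collapses to isomorphism and your induction becomes the paper's two-line argument.
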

	\begin{proof}
	All except that the grading is with respect to the Simpson filtration follows from the definition. 
	We note that the proof of \cite[Lemma 2.4]{LS} works verbatim in the parabolic setting and it shows that each intermediate Higgs term in a periodic parabolic Higgs-de Rham flow initializing a stable parabolic Higgs bundle is stable. 
	Since the stability is an open condition, we may conclude the proof by using Proposition \ref{grsemistable} (ii).   
	\end{proof}
	It has the following geometric consequence. 
	\begin{corollary}
	Let $(E,\theta)$ be a motivic parabolic Higgs bundle over $(C,D)/\mathbb C$. 
	Suppose it is stable \footnote{In general, it is polystable (\cite[Lemma 5.4]{ksdR}).}. 
	Let $(\sC,\sD,\sE,\Theta)/S$ be a spread-out. 
	Then there must exist a proper closed subset $Z\subset S$ and a positive integer $f$, such that for any geometric point $s\in S-Z$ and any $W_2(k(s))$-lifting $\tilde s: 
	\Spec(W_2(k(s)))\to S-Z$ of $s$, the isomorphism class $[(\sE_s,\Theta_s)]$ in its moduli space of semistable parabolic Higgs bundles over $(\sC_s,\sD_s)/k(s)$ of degree zero is fixed by the operator $(\textrm{Gr}_{Fil_S}\circ C^{-1}_{\textnormal{par},s\subset \tilde s})^{f}$.
	\end{corollary}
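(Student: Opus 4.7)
The plan is to derive this corollary as an almost immediate consequence of the Higgs periodicity theorem (Theorem \ref{Higgs periodicity theorem}) together with the rigidity result (Proposition \ref{rigidity}). First I would invoke Theorem \ref{Higgs periodicity theorem}: since $(E,\theta)$ is motivic, it is periodic in the sense of Definition \ref{globally periodic}. Hence, possibly after shrinking $S$ (or passing to a common refinement of the given spread-out and the one provided by the definition of periodicity), there exist a proper closed subscheme $Z_0\subset S$ and a positive integer $f$ such that for every geometric point $s\in S-Z_0$ and every $W_2(k(s))$-lifting $\tilde s$, the reduction $(\sE_s,\Theta_s)$ is $f$-periodic.

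Second, I would apply Proposition \ref{rigidity}, whose stability hypothesis is exactly our assumption. This refines the output of Higgs periodicity: enlarging $Z_0$ to some $Z$ and replacing $f$ by a suitable multiple if necessary, one obtains
\begin{equation*}
(\Gr_{Fil_S}\circ C^{-1}_{\textnormal{par},s\subset \tilde s})^{f}(\sE_s,\Theta_s)\cong (\sE_s,\Theta_s)
\end{equation*}
as parabolic Higgs bundles, for every geometric $s\in S-Z$ and every $W_2(k(s))$-lifting $\tilde s$. Passing to isomorphism classes on both sides yields precisely the required fixed-point statement.

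The only point deserving a brief justification is that the composite $\Gr_{Fil_S}\circ C^{-1}_{\textnormal{par},s\subset \tilde s}$ does descend to a well-defined operation on the moduli space of semistable parabolic Higgs bundles of degree zero over $(\sC_s,\sD_s)/k(s)$: the inverse Cartier transform outputs an adjusted parabolic flat bundle on a dense open of $S$ (absorbed into $Z$), and by Proposition \ref{grsemistable}(ii), the Simpson grading of a semistable parabolic connection is unique up to $S$-equivalence, hence descends to the moduli. Thus no serious obstacle arises beyond bookkeeping: assembling the various bad loci into a single $Z$ and extracting a uniform $f$. The substantive content of the statement really lives in Proposition \ref{rigidity}, which is already proved; the corollary is its geometric reformulation.
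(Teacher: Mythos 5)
Your proof is correct and follows exactly the paper's own argument: the paper's proof of this corollary is literally the one-line "Combine Theorem \ref{Higgs periodicity theorem} and Proposition \ref{rigidity}." Your additional remarks about shrinking $S$, extracting a uniform $f$, and the well-definedness of the flow operator on the moduli via Proposition \ref{grsemistable}(ii) are just the expected bookkeeping the paper leaves implicit.
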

 	\begin{proof}
	Combine Theorem \ref{Higgs periodicity theorem} and Proposition \ref{rigidity}.
 	\end{proof}
	Therefore, in the study of motivic parabolic Higgs bundles, we are led to study the action of the flow operator $\Gr\circ C_{\textrm{par}}^{-1}$ on a moduli space of semistable parabolic Higgs bundles of degree zero in positive characteristic, where $\Gr$ refers to the grading functor $\Gr_{Fil_S}$. 
	Because $C_{\textrm{par}}^{-1}$ restricts to the inverse Cartier transform of Ogus-Vologodsky over objects with trivial parabolic structure, we may simply write $C_{\textrm{par}}^{-1}$ by $C^{-1}$ without causing much confusion. 
 
 A basic functorial property of the flow operator $\Gr\circ C^{-1}$ used in this paper is given as following. Let $k$ be an algebraically closed field of characteristic $p>0$. Let $N$ be a positive integer coprime to $p$. Let $C$ be a smooth projective curve over $k$, and let $\pi: C'\to C$ be a cyclic cover of order $N$, with a reduced branch divisor $D$. Let $G=<\sigma>\cong \Z/N\Z$ be the constant group scheme over $k$ acting on $C'$. Let $D'\subset C'$ be the reduced effective divisor formed by the pre-image of $D$. Then $\pi: (C',D')\to (C,D)$ is a log \'etale morphism over $k$. Recall the Biswas-Iyer-Simpson (BIS) correspondence for $\lambda$-connections ($\lambda=0,1$) established in \cite[Propsoition 2.14]{ksdR}: The parabolic pushforward $\pi_{\Par*}$ and the parabolic pullback $\pi_{\Par}^*$ induce an equivalence of categories between the category of $G$-equivariant logarithmic $\lambda$-connections over $(C',D')/k$ and the category of parabolic $\lambda$-connections over $(C,D)/k$ with parabolic weight in $\frac{1}{N}\Z$. Moreover, the functors are exact, and commute with direct sum and tensor product (see \cite[Remark 2.15]{ksdR}). For a $k$-scheme $T$, set $C_T=C\times_kT$ etc.. We also remark that the proof of the BIS correspondence in loc. cit. works verbatim in the relative setting, viz. for $G$-equivariant logarithmic $\lambda$-connectsions over $(C'_{T},D'_{T})/T$ and parabolic $\lambda$-connections over $(C_T,D_T)/T$. Here the $G$-action on $C'_T$ is given by
 $$
 \sigma_T:=\sigma\times \id_T: C'\times T\to C'\times T.
 $$
Choosing a $W_2(k)$-lifting  $(\tilde C,\tilde D)$ of $(C,D)$, there is a unique log \'etale morphism  $(\tilde C',\tilde D')\to (\tilde C,\tilde D)$ over $W_2(k)$ which lifts $\pi$.    
 \begin{lemma}\label{parabolic pullback commutes with flow operator}
 Notation as above. Let $(E,\theta)$ be an object in $\HIG^{\Par}_{p-1,N}((C,D)/k)$. Then there is a natural isomorphism:
 $$
 \Gr\circ C_{(C',D')\subset (\tilde C',\tilde D')}^{-1}(\pi_{\Par}^*(E,\theta))\cong  \pi_{\Par}^*\Gr\circ C_{(C,D)\subset (\tilde C,\tilde D)}^{-1}(E,\theta).
 $$
 \end{lemma}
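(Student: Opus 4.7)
The plan is to split the claimed isomorphism into two separate commutativities, one for the inverse Cartier transform $C^{-1}_{\Par}$ and one for the grading functor $\Gr_{Fil_S}$, and then compose them.

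First, I would verify that $\pi_{\Par}^{*}$ commutes with $C^{-1}_{\Par}$. By the BIS correspondence (\cite[Proposition 2.14]{ksdR}), parabolic objects on $(C,D)$ with weights in $\tfrac{1}{N}\Z$ are equivalent to $G$-equivariant logarithmic objects on $(C',D')$, and $\pi_{\Par}^{*}$ realizes one direction of the equivalence. Since $\pi$ is log étale and lifts to a log étale morphism $(\tilde C',\tilde D')\to (\tilde C,\tilde D)$ of $W_2(k)$-schemes, the Frobenius-lifting data used in the Ogus-Vologodsky construction pulls back along $\pi$, and $C^{-1}_{\Par}$ is defined in \cite[Theorem 2.10]{ksdR} precisely so that, under BIS, it agrees with the ordinary logarithmic inverse Cartier transform on $(C',D')$. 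Tracing through, one obtains a natural isomorphism $C^{-1}_{(C',D')\subset (\tilde C',\tilde D')}(\pi_{\Par}^{*}(E,\theta))\cong \pi_{\Par}^{*}(C^{-1}_{(C,D)\subset (\tilde C,\tilde D)}(E,\theta))$, compatible with the $G$-equivariant structures on both sides.

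Next, I would show that $\pi_{\Par}^{*}$ commutes with the grading $\Gr_{Fil_S}$ associated to the Simpson filtration. The functor $\pi_{\Par}^{*}$ is exact, tensorial, and preserves slope semistability (parabolic degrees simply scale by $\deg \pi$); moreover it sends saturated Griffiths-transverse filtrations of finite level to filtrations of the same type. Hence, given the Simpson filtration $Fil_S$ on $C^{-1}_{\Par}(E,\theta)$, the pullback $\pi_{\Par}^{*}Fil_S$ is a gr-semistable filtration on $\pi_{\Par}^{*}C^{-1}_{\Par}(E,\theta)$. By Proposition \ref{grsemistable}(ii), its associated graded is $S$-equivalent to $\Gr_{Fil_S}(\pi_{\Par}^{*}C^{-1}_{\Par}(E,\theta))$; in the gr-stable case, the two coincide up to a shift of indices by the uniqueness part of the same proposition. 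Chaining this with the previous step yields the desired isomorphism.

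The main obstacle is the first step, which requires unpacking how the parabolic inverse Cartier transform is assembled locally, in particular how the parabolic weights in $\tfrac{1}{N}\Z$ are encoded as logarithmic data with prescribed residues on a canonical adjusted model, and then checking that each ingredient (the Higgs module reduction modulo $p$, the twisting by the Cartier sheaf, and the gluing via Frobenius liftings) commutes with log étale pullback along $\pi$. The second step is then routine in the presence of Proposition \ref{grsemistable}(ii).
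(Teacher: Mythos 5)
Your decomposition into two commutativities --- one for $C^{-1}_{\Par}$ and one for the grading --- is exactly how the paper proceeds. For the first step the paper simply cites \cite[Lemma 2.19]{ksdR}, and your sketch of why it holds (log \'etale pullback of the Frobenius-lifting data together with the BIS correspondence) is a reasonable unpacking of what that lemma establishes.

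For the second step, however, your route is slightly weaker than what the lemma asserts, and here the paper does something different. You observe that $\pi_{\Par}^{*}Fil_S$ is again a gr-semistable filtration on $\pi_{\Par}^{*}C^{-1}_{\Par}(E,\theta)$ and then invoke Proposition~\ref{grsemistable}(ii). But that proposition only yields that the two associated graded objects are \emph{$S$-equivalent}; it upgrades to an actual isomorphism (up to index shift) only when the filtration is gr-\emph{stable}, as you yourself note. The lemma, on the other hand, asks for a genuine isomorphism. The paper avoids the $S$-equivalence detour by arguing that the HN filtration on the $G$-equivariant bundle $V=\pi_{\Par}^{*}C^{-1}(E,\theta)$ is preserved by the $G$-action (it is canonical), hence by the BIS correspondence it \emph{descends} to a parabolic filtration, and one sees directly that $\pi_{\Par}^{*}$ of the HN filtration of $C^{-1}(E,\theta)$ equals the HN filtration of $V$. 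Equality of filtrations immediately gives the required isomorphism of graded Higgs bundles (via \cite[Proposition 3.5]{ksdR}). To repair your argument you should likewise show that $\pi_{\Par}^{*}Fil_S$ \emph{is} the Simpson filtration of the pullback --- for instance by noting that the iterated-HN construction of $Fil_S$ commutes with $\pi_{\Par}^{*}$ because $\pi_{\Par}^{*}$ preserves HN filtrations (by canonicity and $G$-invariance) --- rather than resting on uniqueness up to $S$-equivalence.
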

\begin{proof}
By \cite[Lemma 2.19]{ksdR}, we have
$$
C^{-1}\pi_{\Par}^*(E,\theta)\cong \pi_{\Par}^*C^{-1}(E,\theta):=(V,\nabla).
$$
Since the HN filtration on the $G$-equivariant bundle $V$ is preserved under the $G$-action (where $G$ is the cyclic group of order $N$), it follows from the BIS correspondence that the parabolic pullback of the HN filtration on the parabolic bundle part of $C^{-1}(E,\theta)$ is nothing but the HN filtration of $V$. By Proposition \cite[Proposition 3.5]{ksdR}, 
$$
\Gr\circ C^{-1}\pi_{\Par}^*(E,\theta)\cong \pi_{\Par}^*\Gr\circ C^{-1}(E,\theta).
$$
The lemma is proved.
\end{proof}	
To conclude this section, we would like to point out one subtlety of the flow operator in the parabolic setting: The quasi-parabolic structure changes in general under the flow operator because of \cite[Proposition 2.18]{ksdR}. Therefore, the flow operator does \emph{not} necessarily induce a self map on the moduli space (with a fixed quasi-parabolic structure). However, in the case of equal parabolic weights, we may circumvent this problem by the following procedure: 
	Let $(E,\theta)$ be a (graded) parabolic Higgs bundle over $(C,D)/k$, where the parabolic weights of $E$ at each $D_i$ are equal. 
	Let $0\leq \alpha_i<1$ be the parabolic weight of $E$ at $D_i$. 
	Let $(\sO_C(\sum_i\alpha_iD_i),0)$ be the rank one parabolic Higgs bundle. 
	Then by \eqref{tensor product}, we get
	\begin{equation*}
	(E,\theta) \cong 
	(E_0,\theta ) \otimes ( \sO_C(\sum_{i}\alpha_iD_i),0 ) ,
	\end{equation*}
	where we equip $E_0$ with the trivial parabolic structure. 
	Clearly, $(E,\theta)$ is (semi)stable iff $(E_0,\theta)$ is (semi)stable. 
	Therefore, there is an obvious identification between the moduli space of semistable parabolic Higgs bundles containing  $(E,\theta)$ and the moduli space of semistable Higgs bundles containing $(E_0,\theta)$. 
	\begin{lemma}\label{tensor product lemma}
	Notation as above. 
	Suppose $\{\alpha_i\}_i\subset \frac{1}{N}\Z$, $\Char(k)=p$ with $(p,N)=1$. Then it holds that
	\begin{equation*}
	\Gr\circ C^{-1}(E,\theta)\cong \Gr\circ C^{-1}(E_0,\theta)\otimes (\sO_C(\sum_i p\alpha_iD_i),0).
	\end{equation*}
	Consequently, the parabolic weights of $\Gr\circ C^{-1}(E,\theta)$ at $D_i$ are equal of weight $\langle p\alpha_i\rangle$, where $\langle x\rangle$ is the fractional part of $x\in \R$.
	\end{lemma}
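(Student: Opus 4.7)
The plan is to combine the tensor multiplicativity of the inverse Cartier transform with the fact that the grading functor commutes with tensoring by a parabolic line bundle. Writing $(E,\theta) \cong (E_0,\theta) \otimes (\sO_C(\sum_i \alpha_i D_i), 0)$ as parabolic Higgs bundles in $\HIG^{\Par}_{p-1,N}$ (both factors are nilpotent of exponent $\leq p-1$ and have weights in $\tfrac{1}{N}\Z$), the tensor property of $C^{-1}_{\Par}$ yields
\begin{equation*}
C^{-1}(E,\theta) \cong C^{-1}(E_0,\theta) \otimes C^{-1}(\sO_C(\textstyle\sum_i \alpha_i D_i), 0).
\end{equation*}
If one wishes to avoid a direct appeal to tensor-functoriality at the parabolic level, it can be extracted via the BIS cyclic-cover reduction: pull everything back to $C'$ where the ordinary Ogus--Vologodsky $C^{-1}$ is known to be a tensor functor, and push down using that $\pi_{\Par*}$ commutes with tensor products (\cite[Remark 2.15]{ksdR}).

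The heart of the argument is the identification of the underlying parabolic line bundle of $C^{-1}(\sO_C(\sum_i \alpha_i D_i), 0)$. I would use the cyclic cover $\pi\colon C'\to C$ of order $N$ branched along $D$ from the discussion preceding Lemma \ref{parabolic pullback commutes with flow operator}. Since $N\alpha_i \in \Z$, the parabolic pullback sends $\sO_C(\sum_i \alpha_i D_i)$ to the ordinary line bundle $\sO_{C'}(\sum_i N\alpha_i D'_i)$ equipped with trivial parabolic structure. On $C'$ the classical $C^{-1}$ applied to a line bundle with zero Higgs field is the Frobenius pullback with its canonical connection, whose underlying line bundle is $\sO_{C'}(\sum_i pN\alpha_i D'_i)$. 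Pushing down and invoking the compatibility of $C^{-1}$ with parabolic pullback (\cite[Lemma 2.19]{ksdR}, already used in Lemma \ref{parabolic pullback commutes with flow operator}) identifies the underlying parabolic line bundle of $C^{-1}(\sO_C(\sum_i \alpha_i D_i), 0)$ with $\sO_C(\sum_i p\alpha_i D_i)$.

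It remains to apply the grading functor. For any parabolic line bundle with connection, the Simpson filtration is trivial and $\Gr(L,\nabla) = (L,0)$; moreover, tensoring a parabolic connection $(V,\nabla)$ by a line bundle simply tensors its HN filtration by that line bundle, so $\Gr$ commutes with such tensoring (with the connection on the line bundle discarded). Combining these two facts with the displayed isomorphism yields the first assertion. For the statement on weights, observe that $\Gr\circ C^{-1}(E_0,\theta)$ again carries the trivial parabolic structure, since $E_0$ does and the residues stay nilpotent; tensoring by $(\sO_C(\sum_i p\alpha_i D_i),0)$ then produces equal weights $\langle p\alpha_i \rangle$ at each $D_i$ by the convention \eqref{tensor product}. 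The principal obstacle is making the computation of $C^{-1}$ on the rank-one factor precise; once treated via the cyclic cover, everything else is formal.
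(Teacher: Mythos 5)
Your proposal matches the paper's proof in structure and substance: both split off the rank-one parabolic twist, invoke tensor-multiplicativity of $C^{-1}_{\Par}$ (the paper cites \cite[Lemma 2.11]{ksdR} for this), compute $C^{-1}(\sO_C(\sum_i\alpha_iD_i),0)\cong(\sO_C(\sum_i p\alpha_iD_i),\nabla_{can})$, and then observe that tensoring by a stable rank-one object commutes with taking the Simpson/HN filtration. The only cosmetic difference is that you fill in the rank-one computation by an explicit cyclic-cover reduction, while the paper states that identity directly.
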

	\begin{proof}
	By \cite[Lemma 2.11]{ksdR}, it follows that
	\begin{equation*}
	C^{-1}(E,\theta)\cong C^{-1}(E_0,\theta)\otimes C^{-1}(\sO_C(\sum_i\alpha_iD_i),0).
	\end{equation*}
	Note that 
	\begin{equation*}
	C^{-1}(\sO_C(\sum_i\alpha_iD_i),0)\cong (\sO_C(\sum_i p\alpha_iD_i),\nabla_{can}).
	\end{equation*}
	It has parabolic weight $\langle p\alpha_i\rangle$ at $D_i$. Since it is stable of rank one, it follows that the Simpson filtration of $C^{-1}(E,\theta)$ is isomorphic to the tensor product of the Simpson filtration of  
	$C^{-1}(E_0,\theta)$ and $\sO_C(\sum_i\langle p
\alpha_i\rangle D_i)$. The rest is clear. 
	 \end{proof}
	The previous formula provides a useful way to understand the theory of twisted Higgs-de Rham flow \cite{SYZ}. 
	Let us see this through the example in the introduction. So $(C,D)=(\P^1,0+1+\lambda+\infty)$, and the graded logarithmic Higgs bundle $(E,\theta)=(\sO\oplus \sO(-1),\theta)$ is equipped with the trivial parabolic structure. 
	Because $\pardeg(E)=\deg(E)=-1$, it cannot be motivic after Theorem \ref{Higgs periodicity theorem} and \cite[Corollary 4.9]{ksdR}. 
	It means that we must put a nontrivial parabolic structure on $E$ to match the parabolic degree that is forced to be zero by periodicity. 
	There are infinitely many choices to do so. 
	In conform to the foregoing discussions, we pick a tuple of four rational numbers in $[0,1)$,
	\begin{equation*}
	\alpha=(\alpha_0,\alpha_1,\alpha_\lambda,\alpha_\infty),
	\end{equation*} 
	which satisfies the equality
	\begin{equation*}
	\alpha_0+\alpha_1+\alpha_\lambda+\alpha_\infty=\frac{1}{2}+n, \quad 0\leq n\leq 3
	\end{equation*}
	Let 
	\begin{equation*}
	(E_{\alpha},\theta)=(E,\theta)\otimes (\sO(-n)(\alpha_00+\alpha_11+\alpha_\lambda \lambda+\alpha_\infty\infty),0)
	\end{equation*}
	be the corresponding parabolic Higgs bundle. Let $M_{\Higgs}$ be the moduli space of rank two and degree zero semistable graded logarithmic Higgs bundles over $(\P^1,D)/k$ containing $(E,\theta)$. It is naturally identified with the moduli space $M_{\alpha}$ of semistable graded parabolic Higgs bundles containing $(E_{\alpha},\theta)$. 
	
	\begin{proposition}\label{identification of self-maps}
	Notation as above. Suppose that each component of $\alpha$ is contained in $\frac{1}{N}\Z$, with $N$ coprime to $\Char(k)=p>0$. Then the flow operator $\Gr\circ C^{-1}$ induces a map
	\begin{equation*}
	\varphi_{\alpha}: M_{\alpha}(k)\to M_{\langle p\alpha\rangle}(k),
	\end{equation*}
	where $\langle p\alpha\rangle=(\langle p\alpha_0\rangle,\langle p\alpha_1\rangle,\langle p\alpha_\lambda\rangle,\langle p\alpha_\infty\rangle)$. Under the natural identifications
	\begin{equation*}
	M_{\alpha}\cong M_{\Higgs}\cong M_{\langle p\alpha\rangle},
	\end{equation*}
	$\varphi_{\alpha}$ is identified with the self map $\varphi_{\lambda,p}$ of Sun-Yang-Zuo.
	\end{proposition}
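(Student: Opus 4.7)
The plan is to reduce Proposition~\ref{identification of self-maps} to the Sun-Yang-Zuo identity \eqref{twist by O(m)} by using Lemma~\ref{tensor product lemma} to strip off the parabolic twist, and then to match the resulting object with $(E_{\langle p\alpha\rangle},\theta_{x'})$ by a short degree computation. First I would exploit \eqref{tensor product} and the definition of $E_\alpha$ to write
\begin{equation*}
(E_{\alpha},\theta_x)\cong (\sO_{\Pl}\oplus\sO_{\Pl}(-1),\theta_x)\otimes (\sO_{\Pl}(-n),0)\otimes \bigl(\sO_{\Pl}(\textstyle\sum_i\alpha_iD_i),0\bigr),
\end{equation*}
where the first two factors are equipped with the trivial parabolic structure and the last factor realizes the prescribed equal weight $\alpha_i$ at $D_i$. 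Applying Lemma~\ref{tensor product lemma} to this decomposition, the flow operator splits off the parabolic weight contribution as a twist by $(\sO_{\Pl}(\sum_ip\alpha_iD_i),0)$, leaving the non-parabolic part $(\sO_{\Pl}\oplus\sO_{\Pl}(-1),\theta_x)\otimes (\sO_{\Pl}(-n),0)$ to which $\Gr\circ C^{-1}$ must still be applied.

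For the non-parabolic part I would combine the compatibility of $C^{-1}$ with tensor products (\cite[Lemma~2.11]{ksdR}), the formula $C^{-1}(\sO_{\Pl}(-n),0)\cong (\sO_{\Pl}(-np),\nabla_{can})$, the triviality of the grading on rank one objects, and the Sun-Yang-Zuo identity \eqref{twist by O(m)}, to obtain
\begin{equation*}
\Gr\circ C^{-1}(E_{\alpha},\theta_x)\cong (\sO_{\Pl}\oplus\sO_{\Pl}(-1),\theta_{x'})\otimes \bigl(\sO_{\Pl}(m-np)(\textstyle\sum_ip\alpha_iD_i),0\bigr),
\end{equation*}
with $m=(1-p)/2$.

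The one step that requires genuine attention is matching this with $(E_{\langle p\alpha\rangle},\theta_{x'})$. Writing $\sO_{\Pl}(\sum_ip\alpha_iD_i)\cong \sO_{\Pl}(\sum_i\lfloor p\alpha_i\rfloor)\otimes \sO_{\Pl}(\sum_i\langle p\alpha_i\rangle D_i)$ in the parabolic category, it suffices to check $m-np+\sum_i\lfloor p\alpha_i\rfloor=-n'$. This follows from the two constraints $\sum_i\alpha_i=\tfrac12+n$ and $\sum_i\langle p\alpha_i\rangle=\tfrac12+n'$ (the latter holding since $p$ is odd so $\sum_ip\alpha_i$ is a half-integer) together with $m=(1-p)/2$. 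With this in hand, under the identifications $M_{\alpha}\cong M_{\Higgs}\cong M_{\langle p\alpha\rangle}$ — given by tensoring with the appropriate rank one parabolic Higgs bundles and labeling a stable point by the zero of its Higgs field — the computation shows $\varphi_{\alpha}([(E_{\alpha},\theta_x)])=[(E_{\langle p\alpha\rangle},\theta_{x'})]$ corresponds to $x\mapsto x'$, which is exactly the defining property of $\varphi_{\lambda,p}$ in \eqref{twist by O(m)}.
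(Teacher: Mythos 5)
Your proof is correct and takes essentially the same approach as the paper: the paper's own proof is the one-liner ``This is a special case of Lemma~\ref{tensor product lemma},'' and your argument simply makes explicit the tensor decomposition and the degree bookkeeping (matching $m-np+\sum_i\lfloor p\alpha_i\rfloor=-n'$) that this one-liner leaves implicit, in the same spirit as the computation the paper records just below for $M_{\frac{1}{2}P}$.
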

	\begin{proof}
	This is a special case of Lemma \ref{tensor product lemma}.
	\end{proof}

	There is one particularly simple choice of $\alpha$, namely the case when only one nonzero component of $\alpha$ is $\frac{1}{2}$. We shall simply write the moduli space by $M_{\frac{1}{2}P}$ when the component of $\alpha$ at $P$ is $\frac{1}{2}$. Note that for $p$ odd, the flow operator induces indeed a self map on $M_{\frac{1}{2}P}$ without resorting to the identification. Let $(E_{\frac{1}{2}P},\theta_x)$ represent a point in $M_{\frac{1}{2}P}(k)$. Then the isomorphism in Lemma \ref{tensor product lemma} yields a parabolic interpretation of the isomorphism \eqref{twist by O(m)} as follows:
	\begin{equation*}
	\begin{array}{rcl}
			\Gr\circ C^{-1}(E,\theta_x)&\cong & \Gr\circ C^{-1}(E_{\frac{1}{2}P},\theta_x)\otimes \Gr\circ C^{-1}(\sO(-\frac{1}{2}P),0)\\
			&\cong&(E_{\frac{1}{2}P},\theta_{x'})\otimes (\sO(-\frac{p}{2}P),0)\\
			&\cong&(E,\theta_{x'})\otimes (\sO(\frac{1}{2}P),0)\otimes (\sO(-\frac{p}{2}P),0)\\
			&\cong &(E,\theta_{x'})\otimes (\sO(\frac{1-p}{2}P),0).
		\end{array}
	\end{equation*}
	The moduli space $M_{\frac{1}{2}P}$ plays an important role in later sections.	
	The next three sections constitute the core of our method approaching to Conjecture \ref{SYZ conj}.
	Functoriality is the guiding principle of our treatment.

	\section{Elliptic curve as moduli space}
	In this section and next, let $k$ be an algebraically closed field with $\Char(k)\neq 2$, $C=C_{\lambda}$ be the elliptic curve over $k$, as defined in \S1, which is the double cover of $\P^{1}$ branched along $D=0+1+\lambda+\infty$, together with the projection map $\pi: C \to \P^1$ over $k$. Let $B$ be the reduced Weil divisor of $C$, formed by the pre-image of $|D|$ (so $\pi^*D=2B$). 
	We also write $\infty\in |B|$, the pre-image of $\infty\in |D|$, and set it as the origin of $C$. 
	Then we see at once that the elements in $ |B| $ are exactly the two-torsion points. Let $\sigma: C\to C$ be the canonical involution, and $G=\{\id, \sigma\}$ the constant group scheme over $k$ acting on $C$. One has $ \pi = \pi \circ \sigma$, and four elements of $|B| \subset C(k)$ are the fixed points of $\sigma$.

	The goal of this section is to construct a moduli space of logarithmic Higgs bundles over $(C,B)$, which is naturally isomorphic to $C$. Let us consider the moduli space $M_{\textrm{gr}}$ (resp. $M^s_{\textrm{gr}}$) of semistable (resp. stable) logarithmic Higgs bundles $(F, \eta)$ over $(C,B)$ satisfying the following conditions:
	\begin{itemize}
		\item[(\romannumeral1)] $F$ is of rank $2$;
		\item[(\romannumeral2)] $(F,\eta)=(F^{1,0}\oplus F^{0,1},\eta^{1,0}\oplus \eta^{0,1})$ is graded;
		\item[(\romannumeral3)] the determinant of $F$ is trivial.
	\end{itemize}
	\begin{lemma}
		$M_{\textrm{gr}}$ is a disjoint union of four sub-moduli spaces:
		\begin{equation*}
			M_{\textrm{gr}} = M_{0} \sqcup M_{1} \sqcup M_{2},
		\end{equation*}
		where 
  $M_{0}$ parameterizes $(L\oplus L^{-1},0)$ for $\deg L=0$, and  $M_{i}, i\geq 1$, parametrizes those of form $(L\oplus L^{-1},\eta)$ with $\deg L = i$ and $\eta \neq 0$. Consequently, 
  $$
  M^s_{\textrm{gr}}=M_{1} \sqcup M_{2}.
  $$
  \end{lemma}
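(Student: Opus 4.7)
The plan is to exploit the three constraints defining $M_{\textrm{gr}}$ — rank two, graded, and trivial determinant — to reduce the classification to an analysis of a single line bundle $L = F^{1,0}$ and a section of $L^{-2}(B)$, and then to sort the result by the discrete invariant $\deg L$.

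First, I would write $F = F^{1,0} \oplus F^{0,1}$ and use $\det F \cong \sO_{C}$ together with the grading to deduce $F^{0,1} \cong (F^{1,0})^{-1}$, so $F = L \oplus L^{-1}$ with $L := F^{1,0}$. The graded Higgs field then reduces to a single map $\eta = \eta^{1,0}\colon L \to L^{-1} \otimes \Omega_{C}(B)$, since the $(0,1)$-component has no place to go. Because $C$ is elliptic, $\Omega_{C} \cong \sO_{C}$, so $\eta$ is a section of $L^{-2}(B)$, which has degree $4 - 2\deg L$. Hence a nonzero $\eta$ forces $\deg L \leq 2$.

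Next I would analyze $\eta$-invariant line subbundles. The subbundle $L^{-1} \subset L \oplus L^{-1}$ is always $\eta$-invariant because $\eta$ sends $L$ into $L^{-1}\otimes\Omega_{C}(B)$ and annihilates $L^{-1}$ by the grading. The subbundle $L$ is $\eta$-invariant iff $\eta = 0$. For any other line subbundle $M \hookrightarrow L \oplus L^{-1}$, the inclusion is of the form $(a,b)$ with $a \colon M \to L$ nonzero; writing $\eta((a(m),b(m))) = (0, \eta(a(m)))$ and imposing that this lies in $M \otimes \Omega_{C}(B)$ forces $\eta \circ a = 0$, hence $\eta = 0$. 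Consequently, when $\eta \neq 0$ the only $\eta$-invariant proper subbundle is $L^{-1}$, and semistability amounts to $\deg L^{-1} \leq 0$, i.e.\ $\deg L \geq 0$. Stability further requires strict inequality, i.e.\ $\deg L \geq 1$. When $\eta = 0$ both $L$ and $L^{-1}$ are invariant, so semistability forces $\deg L = 0$.

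Combining these observations with the upper bound $\deg L \leq 2$, semistable objects fall into three types: $\eta = 0$ with $\deg L = 0$ (giving $M_{0}$), and $\eta \neq 0$ with $\deg L \in \{0,1,2\}$. The borderline case $\eta \neq 0$ with $\deg L = 0$ is strictly semistable with Jordan--H\"older factors $(L^{-1},0)$ and $(L,0)$, hence $S$-equivalent to $(L \oplus L^{-1},0)$ and so absorbed into the point of $M_{0}$ attached to $L$. Disjointness of $M_{0}, M_{1}, M_{2}$ follows from the fact that $\deg F^{1,0}$ is locally constant on a family of graded bundles, so it is a discrete invariant on $M_{\textrm{gr}}$. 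Stability of the members of $M_{1}$ and $M_{2}$ is immediate from the analysis above, giving the final statement $M^{s}_{\textrm{gr}} = M_{1} \sqcup M_{2}$. The only mildly delicate step is the verification that no other line subbundle can be $\eta$-invariant, and that the strictly semistable locus with $\deg L = 0$ is correctly identified in $M_{0}$ via $S$-equivalence; both are bookkeeping once the grading structure of $\eta$ is exploited.
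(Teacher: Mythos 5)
Your proof is correct and follows essentially the same route as the paper's: split into the cases $\eta=0$ and $\eta\neq 0$, use the grading and trivial determinant to write $F=L\oplus L^{-1}$, bound $\deg L$ by the degree of $L^{-2}\otimes\Omega_C(B)$, pin down the $\eta$-invariant subbundles to read off (semi)stability, and absorb the strictly semistable $\deg L=0$ case into $M_0$ by $S$-equivalence. Your explicit check that $L^{-1}$ is the only $\eta$-invariant proper subbundle when $\eta\neq 0$ spells out what the paper compresses into the phrase ``owing to the semistability,'' and your remark that $\deg F^{1,0}$ is a locally constant invariant makes the disjointness explicit, but these are expansions rather than a different method.
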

	\begin{proof}
   Consider first the case that $\eta=0$. Hence $F$ is a rank two semistable vector bundle with trivial determinant over $C$. It is either decomposable or indecomposable. Taking the trivial determinant into account, one obtains
	$F\cong L\oplus L^{-1}$, and $\deg L=0$ as $F$ is semistable. In the latter case, $F$ must be a nontrivial extension of an $L$ by itself, and $L$ must be of two torsion because of the trivial determinant. But they are $S$-equivalent to elements in the former case.
	
 When $\eta\neq 0$, $F$ has to be a direct sum of two line bundles $L \oplus L^{-1}$, equipped with a nonzero morphism
	\begin{equation*}
		\eta^{1,0}:L \to L^{-1} \otimes \Omega_{C}(B).
	\end{equation*}
	Therefore $\deg (L) \leq 2$. Owning to the semistability, one has $\deg L \geq 0$. 
	Hence we obtain a decomposition into three disjoint union of submoduli spaces $M_i$ for $ 0 \leq i \leq 2$ according to the degree of $L$. It is clear that only when $\eta\neq 0$ and $\deg i\geq 1$, $(F,\eta)$ is stable. When $\deg L=0$, it lies in $M_0$ by the $S$-equivalence. The lemma follows.
	\end{proof}

   We are looking for some particular submoduli space $\Sigma_{\Higgs}$ in $M_{\textrm{gr}}$ isomorphic to $C$. The reason that we have to consider the whole moduli $M_{\textrm{gr}}$ is simply because a priori, in the positive characteristic situation, the flow operator maps $\Sigma_{\Higgs}$ to $M_{\textrm{gr}}$ (see \S5). 

Consider the geometry of $M^s_{\textrm{gr}}$: If $ \deg L = 2 $, the Higgs field is maximal. So one obtains an isomorphism 
	\begin{equation*}
		L^{\otimes 2}\cong \Omega_C(B),
	\end{equation*}
	from which it follows that $L\cong \sO_C(P+\infty)$ for some $P \in  |B|$. That means $M_2$ consists of disjoint union of four points. If $ \deg L=1$, by sending the pair $[(L\oplus L^{-1},\eta)]$ to $[L]$, one obtains a natural morphism 
	\begin{equation*}
		\pi_1: M_1\to \mathrm{Pic}^{1}(C) \cong C.
	\end{equation*}
	It is a ruled surface. Indeed, the fiber of $\pi_1$ at the point $[L]$ is naturally isomorphic to the complete linear system $|L^{-2}\otimes \Omega_C(B)|$, which is a $ \mathbb{P}^{1} $ by Riemann-Roch. In junction with $M_{\mathrm{Higgs}}$, we observe that there are natural morphisms $M_{\mathrm{Higgs}}\to M_1$ as follows: 
	Choose and fix a point $P\in |D|$. In Proposition \ref{identification of self-maps} we have seen $M_{\Higgs}\cong M_{\frac{1}{2}P}$. 
	\begin{lemma}\label{four fibers}
	Let $Q\in |B|$ be the pre-image of $P$.	Then the parabolic pullback induces a morphism of $k$-varieties
	\begin{equation*}
	M_{\frac{1}{2}P}\to M_1, 
	\end{equation*}
	which induces an isomorphism onto the closed fiber $\pi_1^{-1}(Q)$ of $\pi_1$. 
	\end{lemma}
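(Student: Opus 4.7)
The plan is to compute the parabolic pullback $\pi^{*}_{\Par}(E_{\frac{1}{2}P},\theta_x)$ in closed form and then identify the induced map with the obvious pullback isomorphism between two linear systems on $\P^1$ and $C$. The key geometric input is that each $Q\in|B|$ is a $2$-torsion point of $C$ (with origin $\infty$), so $2Q\sim 2\infty$, together with the fact that $B=\sum_{R\in|B|}R\sim 4\infty$, since the four $2$-torsion points sum to $0$ in the group law.

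First I would compute the underlying bundle. Using that $\pi^{*}_{\Par}$ is exact and compatible with direct sums and tensor products (\cite[Remark 2.15]{ksdR}), and that the parabolic weight $\frac{1}{2}$ at the branch point $P$ combines with the ramification index $2$ to produce an integral divisor under pullback, one has $\pi^{*}_{\Par}\sO_{\P^1}(\tfrac{1}{2}P)\cong \sO_C(Q)$ and $\pi^{*}_{\Par}\sO_{\P^1}(-1)\cong \sO_C(-2\infty)$. Hence
\begin{equation*}
\pi^{*}_{\Par}E_{\frac{1}{2}P}\cong \sO_C(Q)\oplus \sO_C(Q-2\infty)\cong \sO_C(Q)\oplus \sO_C(-Q),
\end{equation*}
the last step using $2Q\sim 2\infty$. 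Setting $L:=\sO_C(Q)\in\Pic^{1}(C)$, this is of the required form $L\oplus L^{-1}$ with $\deg L=1$. Since the BIS correspondence preserves (semi)stability and the Higgs field remains nonzero, the image lies in $M_1$; under the identification $\Pic^1(C)\cong C$, $R\mapsto \sO_C(R)$, it in fact lies in the fiber $\pi_1^{-1}(Q)$.

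Next I would identify the two $\P^1$'s being matched. On the source, $M_{\frac{1}{2}P}\cong |\sO_{\P^1}(1)|$ via the zero $x$ of $\theta_x$. On the target, using $\pi^{*}\Omega_{\P^1}(\log D)\cong\Omega_C(\log B)$ together with $-2Q+B\sim 2\infty$, one has
\begin{equation*}
\pi_1^{-1}(Q)=\P\bigl(H^{0}(C,L^{-2}\otimes\Omega_C(B))\bigr)=\P\bigl(H^{0}(C,\sO_C(-2Q+B))\bigr)\cong \P\bigl(H^{0}(C,\sO_C(2\infty))\bigr).
\end{equation*}
Under these two identifications, the induced map sends $x$ to the pullback divisor $\pi^{-1}(x)$; equivalently, it is the projectivization of the pullback of sections $\pi^{*}\colon H^{0}(\P^1,\sO_{\P^1}(1))\to H^{0}(C,\sO_C(2\infty))$. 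Since $\pi\colon C\to\P^1$ is itself the morphism associated to the complete linear system $|2\infty|$, this pullback is an isomorphism of $2$-dimensional vector spaces, yielding the desired isomorphism of the projectivizations.

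Finally, the property of being a morphism of $k$-varieties (rather than merely a set-theoretic bijection) is a formal consequence of the relative BIS correspondence and the functoriality of parabolic pullback on families, applied to a universal family over $M_{\frac{1}{2}P}$. The main technical nuisance I foresee is in rigorously justifying the parabolic pullback formula $\pi^{*}_{\Par}\sO_{\P^1}(\tfrac{1}{2}P)\cong \sO_C(Q)$ at a branch point with weight $\frac{1}{2}$; once this is in place, everything else is a tidy combination of Riemann-Roch on $C$ and the $2$-torsion identities in $\Pic(C)$.
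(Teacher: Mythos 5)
Your proof is correct, and it takes a route that is genuinely complementary to the paper's. The paper's proof starts from the bundle $(F_Q,\eta_{\pi^*x})$ on $C$, equips it with an explicit $G$-structure, and then computes the parabolic pushforward $\pi_{\Par*}(F_Q,\eta_{\pi^*x})\cong (E_{\frac{1}{2}P},\theta_x)$, invoking the BIS correspondence to conclude that $\pi_{\Par}^*$ is a bijection between the two moduli sets; it identifies the zero of the pushed-forward Higgs field only indirectly (``it has a simple zero for degree reasons,'' then argues it must be $x$). You go in the opposite direction: you compute $\pi_{\Par}^*(E_{\frac{1}{2}P},\theta_x)$ directly, using the $2$-torsion identities $2Q\sim 2\infty$ and $B\sim 4\infty$ to put the underlying bundle in the canonical form $\sO_C(Q)\oplus\sO_C(-Q)$, and then you identify the induced map $M_{\frac{1}{2}P}\to\pi_1^{-1}(Q)$ explicitly as the projectivization of $\pi^*\colon H^0(\P^1,\sO_{\P^1}(1))\to H^0(C,\sO_C(2\infty))$, which is an isomorphism because $\pi$ is the map attached to $|2\infty|$. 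Your formulation has the advantage of making the induced morphism transparent; the paper's has the advantage of never having to invoke the group law on $C$.

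Two small cautions. First, as you anticipate, the step $\pi_{\Par}^*\sO_{\P^1}(\tfrac{1}{2}P)\cong\sO_C(Q)$ is precisely the Biswas--Iyer--Simpson pullback formula for a parabolic line bundle through a ramification point of index $2$ with weight $\tfrac{1}{2}$; you should cite that formula explicitly (cf.\ \cite[Example 2.5, Prop.\ 2.14]{ksdR} and \cite{Biswas1997ParabolicBA}) rather than leaving it as a ``technical nuisance.'' Second, when you upgrade from a bijection on $k$-points to a morphism of $k$-varieties, you appeal to ``a universal family over $M_{\frac{1}{2}P}$''; since the moduli schemes here only corepresent their functors, you should work with \emph{local} universal families (or, as the paper does, work directly with arbitrary $T$-valued points), and note that both the relative BIS correspondence and the relative pullback-of-sections map $\pi^*$ are compatible with base change. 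With those two clarifications the argument is complete.
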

	\begin{proof}
	By definition, the closed immersion $\pi_1^{-1}(Q)\to M_1$ is the submoduli space consisting of those logarithmic Higgs bundles $(F_Q=\mathcal{O}_{C}(Q)\oplus \mathcal{O}_{C}(-Q),\eta)$. 
	We claim that the functor $\pi_{\Par}^*$ induces an isomorphism of moduli spaces $M_{\frac{1}{2}P}\cong \pi_1^{-1}(Q)$. 
	Let $(E_{\frac{1}{2}P},\theta_x)$ represent an element in $M_{\frac{1}{2}P}(k)$. In the following, we show that $\pi_{\Par}^*(E_{\frac{1}{2}P},\theta_x)$ is \emph{naturally} isomorphic to $(F_Q,\eta_{\pi^{*}x})$, where the divisor of zero of the nonzero morphism 
	\begin{equation*}
	\eta_{\pi^{*}x}^{1,0}: \sO_C(Q)\to \sO_C(-Q)\otimes \Omega_{C}(B),
	\end{equation*}
	equals $\pi^*x$ (regarding $x$ as a Weil divisor). As $Q$ is a fixed point of $\sigma$, there is a natural isomorphism
	\begin{equation*}
	\sigma^*\sO_C(Q)\cong \sO_{C}(\sigma(Q))=\sO_C(Q).
	\end{equation*}
	So does its dual $\sO_C(-Q)$. Taking the direct sum of these isomorphisms, we obtain the isomorphism $\psi_{\sigma}: \sigma^*F_Q\cong F_Q$. Then $\psi=\{\psi_{\id}=\textrm{Id},\psi_{\sigma}\}$ defines a natural $G$-action on $F_Q$. Because 
	\begin{equation*}
	\sigma^*(\eta_{\pi^{*}x})=\eta_{\sigma^*\pi^*x}=\eta_{\pi^*x},
	\end{equation*}
	$\psi$ actually gives rise to a $G$-action on the Higgs bundle $(F_Q,\eta_{\pi^{*}x})$. Then by the BIS correspondence, it suffices to show the following natural isomorphism
	\begin{equation*}
	\pi_{\Par*}(F_Q,\eta_{\pi^{*}x})\cong (E_{\frac{1}{2}P},\theta_x).
	\end{equation*}
	But this is easy: Note that the $G$-action on the vector bundle $F_Q$ decomposes. It follows that
	\begin{equation*}
	\pi_{\Par*}(F_Q)\cong \pi_{\Par*}(\sO_C(Q))\oplus \pi_{\Par*}(\sO_C(-Q))\cong \sO_{\P^1}(\frac{1}{2}P)\oplus \sO_{\P^1}(-\frac{1}{2}P),
	\end{equation*}
	which is naturally isomorphic to $E_{\frac{1}{2}P}$. Clearly 
	\begin{equation*}
	\pi_{\Par*}(\eta_{\pi^{*}x}): \sO_{\P^1}(\frac{1}{2}P)\to \sO_{\P^1}(-\frac{1}{2}P)\otimes \Omega_{\P^1}(D)
	\end{equation*}
	is nonzero, and hence admits a simple zero (say $x'$) for degree reason. 
	It is easy to see that the support of the divisor of zero of $\pi_{\Par}^*(\theta_{x'})$ is contained in $\pi^{-1}x'$. Therefore, $x'=x$. 
	This fact can be also seen directly by a local computation on the parabolic pullback. 
	
	In order to obtain the full claim, we need to repeat the above argument for any $T$-valued points of the corresponding moduli functors to $M_{\frac{1}{2}P}$ and $\pi_1^{-1}(Q)$, where $T$ is any $k$-scheme of finite type. Since the BIS correspondence also works in the relative setting, we leave the necessary formal extension of the above argument as an exercise.  
	\end{proof}
	
	\begin{lemma}\label{four sections}
		The submoduli space of $M_1$ consisting of those Higgs fields vanishing only at one point (multiplicity not counted) is a disjoint union of four sections $\Sigma^Q_{\mathrm{Higgs}}$ of $\pi_1$, labelled by four two torsion points $Q\in |B|$. 
	\end{lemma}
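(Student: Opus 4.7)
The plan is to analyze $M_1$ fiberwise over $\Pic^1(C)\cong C$ under $\pi_1$ and then isolate the ``one-zero'' locus by a computation in the elliptic group law.

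First I would make each fiber concrete. Since $(F,\eta)=(L\oplus L^{-1},\eta)$ is graded with $\deg L>\deg L^{-1}$, the only nonzero component of the Higgs field is $\eta^{1,0}\colon L\to L^{-1}\otimes\Omega_C(B)$, and a point of $\pi_1^{-1}([L])$ is its class up to scalar. Using $\Omega_C\cong\sO_C$ and $\deg B=4$, the target line bundle $L^{-2}\otimes\Omega_C(B)$ has degree $2$ and $h^0=2$, so the fiber identifies with the complete linear system $|L^{-2}\otimes\Omega_C(B)|\cong\P^1$ of effective degree-$2$ divisors. The submoduli described in the statement is then exactly the locus of ``double'' divisors $2R$ with $R\in C(k)$.

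Next I would determine which $R$ actually arise for a given $L$. Writing $L=\sO_C(P)$ via Abel--Jacobi and using the identity $\sO_C(B)\cong\sO_C(4\infty)$ in $\Pic^4(C)$ (equivalently, the sum of the four two-torsion points vanishes in $\Pic^0(C)$), the condition $\sO_C(2R)\cong L^{-2}\otimes\Omega_C(B)$ reduces to $2(R\oplus P)=0$ in the elliptic group law, so $R=Q\ominus P$ for precisely the four $Q\in|B|$. Hence the double-divisor locus meets each fiber of $\pi_1$ in exactly four points.

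Finally, for each $Q\in|B|$ the assignment $\Sigma^Q_{\Higgs}\colon P\mapsto[(\sO_C(P)\oplus\sO_C(-P),\eta_{2(Q\ominus P)})]$ is set-theoretically a section of $\pi_1$; the four are pairwise disjoint because distinct $Q$'s give distinct $R$'s for the same $P$, and together they exhaust the described locus by the count above. The main point requiring care is promoting each $\Sigma^Q_{\Higgs}$ to an actual morphism of $k$-schemes; I would handle this by packaging the pointwise construction into a universal family of Higgs bundles on $C\times C$ via a Poincar\'e line bundle on $C\times\Pic^1(C)$ (normalized so that the restriction to $\{P\}\times C$ is $\sO_C(P)$) together with the translation-by-$Q$ automorphism of $C$, so that the zero divisor $2(Q\ominus P)$ varies algebraically in $P$.
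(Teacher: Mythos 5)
Your argument is correct and follows essentially the same route as the paper: identify the fiber $\pi_1^{-1}([L])$ with the linear system $|L^{-2}\otimes\Omega_C(B)|$ of effective degree-two divisors, observe that the one-zero locus corresponds to double divisors $2R$, and solve $2R\sim -2P$ in the group law (using $\Omega_C\cong\sO_C$, $B\sim 4\infty$, and $\sigma(P)=\ominus P$) to get exactly four solutions $R=Q\ominus P$ with $Q\in C[2]$. The paper's proof is the same computation written a bit more tersely; your extra care about packaging the pointwise sections into morphisms via a Poincar\'e bundle is a reasonable and harmless addendum that the paper leaves implicit.
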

	\begin{proof}
		Let $(\mathcal{O}_{C}(P)\oplus \mathcal{O}_{C}(-P), \eta)$ represent an element in $M_1$. The divisor of zero of $\eta^{1,0}$ is a closed subscheme $R+S$ of length two satisfying the equation
		\begin{equation*}
			[R+S]=[2\sigma(P)].
		\end{equation*}
		Here $ [ - + - ] $ denotes for the addition of the group scheme $C$. 
		Therefore, it lies in the submoduli modulie if and only if $R=S$ and $[2R]=[2\sigma(P)]$ hold. Clearly, the solutions are 
		\begin{equation*}
			R=S=[\sigma(P)+Q],
		\end{equation*}
		for a two torsion point $Q$. So the lemma follows.
	\end{proof}
	We denote by $\Sigma_{\mathrm{Higgs}}$ the section $\Sigma^Q_{\mathrm{Higgs}}$ where $Q$ is the origin of $C$. 
	Therefore, $\Sigma_{\mathrm{Higgs}}$ consists of rank two stable logarithmic Higgs bundles of the following form:
	\begin{equation}\label{Higgs bundle in canonical section}
		(\mathcal{O}_{C}(P)\oplus \mathcal{O}_{C}(-P), \eta_{2\sigma(P)}),
	\end{equation}
	where the Higgs field $\eta^{1,0}_{2\sigma(P)}$ vanishes at $\sigma(P)$ with order $2$. In the picture below, four black dots in $C$ represent two torsion points. 

 \begin{center}
		\begin{tikzpicture}

		\draw [line width=2.5pt](-1.5,0) -- (2.25,0);
		\node[above] at (-2,0) {$ \Sigma_{\Higgs} $};
		\draw [line width=1pt](-1.5,0.5) -- (2.25,0.5);
		\draw [line width=1pt](-1.5,-0.5) -- (2.25,-0.5);
		\draw [line width=1pt](-1.5,1) -- (2.25,1);
		\draw [line width=1pt](-1.5,-1.5) -- (2.25,-1.5);
		\draw [line width=1pt](0,1.5) -- (0,-1);
		\draw [line width=1pt](0.75,1.5) -- (0.75,-1);
		\draw [line width=1pt](-0.75,1.5) -- (-0.75,-1);
		\draw [line width=1pt](1.5,1.5) -- (1.5,-1);
		\fill (0,0) circle (.1);
		\fill (-0.75,0) circle (.1); 
		\fill (0.75,0) circle (.1); 
		\fill (1.5,0) circle (.1); 
		\fill (0,-1.5) circle (.1);
		\fill (-0.75,-1.5) circle (.1); 
		\fill (0.75,-1.5) circle (.1); 
		\fill (1.5,-1.5) circle (.1); 
		\node[font=\large](M1) at (-3,0){$ M_{1}: $};
		\node[font=\large](C) at (-3,-1.5){$ C: $};
		\node[left] at (-3,-0.75){$ \pi_{1} $};
		\draw[->] (-3,-0.5) -- (-3,-1);
		\end{tikzpicture}
	\end{center}

	\section{Covering map as natural transformation}
	Recall that $\pi: C\to \P^1$ is the covering map of degree two. In this section, we shall enhance it to a morphism of moduli spaces over $k$
	\begin{equation*}
	\pi_{\Higgs}: \Sigma_{\Higgs}\to M_{\Higgs}.
	\end{equation*}
	In other words, we shall construct a $k$-morphism $\pi_{\Higgs}$ which coincides with $\pi$ after the previous identifications $M_{\Higgs}\cong \P^1, \Sigma_{\Higgs}\cong C$. Before doing this, let us remark that for the sake of Conjecture \ref{SYZ conj}, it suffices to construct a set-theoretical map 
	\begin{equation*}
	\pi_{\Higgs}(k): \Sigma_{\Higgs}(k)\to M_{\Higgs}(k),
	\end{equation*}
	which coincides with $\pi(k)$ under the identifications. 
	The reason for doing so is to demonstrate power (and beauty) of the notion of a moduli functor, which turns the functorial property of an underlying construction into something more useful.

	Let $(\mathrm{Sch}/k)$ be the category of $k$-schemes of finite type. Let 
	\begin{equation*}
	\sM^s: (\mathrm{Sch}/k)^{\circ} \to (\mathrm{Sets})
	\end{equation*}
	be the moduli functor of logarithmic stable Higgs sheaves over $(C,B)/k$ of rank two and degree zero. 
	To be precise, for any $T\in (\mathrm{Sch}/k)$, $\sM^s(T)$ is the set of equivalence classes of logarithmic Higgs modules $(\sF,\eta)$ over $(C_T,B_T)$, where $\sF$
	is a coherent $\sO_{C_T}$-module of rank two, 
	\begin{equation*}
	\eta: \sF\to \sF\otimes_{\sO_{C_T}}\Omega_{C_T/T}(\log B_T)
	\end{equation*}
	is $\sO_{C_T}$-linear satisfying $\eta\wedge \eta=0$, and for each geometric point $t\in T$, the fiber $(F_t,\eta_t):=(\sF,\eta)|_{C_t}$ is a rank two stable logarithmic Higgs bundle over $(C_t,B_t)/k$ of degree zero. 
	In above, $(\sF,\eta)\equiv(\sF',\eta')$, if there is an invertible sheaf $L$ over $T$ such that there is an isomorphism of Higgs sheaves:
	\begin{equation*}
	(\sF,\eta)\cong(\sF',\eta')\otimes (p_T^*L,0),
	\end{equation*}
	where $p_T: C_T\to T$ is the projection. 
	It is a well-established theorem that the moduli functor $\sM$ is \emph{universally corepresented} by a quasi-projective scheme $M^s$ over $k$ (see \cite[Theorem 4.3.4, 3.B]{HL10}, \cite{Ma16}, \cite{Sim94}, \cite{Lan14}). 
	Therefore, for each morphism $T\to M^s$ in $(\mathrm{Sch}/k)$, the fibre product $\sT:=\underline{T}\times_{\underline M^s}\sM^s$ is (universally) copresented, where $\alpha: \sM^s\to \underline M^s$ is constructed by the moduli problem, and $\underline T: (\mathrm{Sch}/k)^{\circ} \to (\mathrm{Sets})$ be the functor represented by $T$.  
	
	The universal corepresentability of $\sM^s$ allows us to define various submoduli functors for logarithmic Higgs bundles over $(C,B)/k$ with additional structures. 
	Let $\Sigma_{\Higgs}\to M^s$ be the closed immersion, whose image in $M^s(k)$ is the closed subset consisting of the isomorphism classes of form \eqref{Higgs bundle in canonical section}.
	\begin{lemma}\label{Description of Sigma}
	The functor $\varSigma_{\Higgs}=\underline{\Sigma_{\Higgs}}\times_{\underline M^s}\sM^s$ is copresented by $\Sigma_{\Higgs}$. 
	For $T\in (\mathrm{Sch}/k)$, $\varSigma_{\Higgs}(T)$ is the set of equivalence classes of graded Higgs modules $(\sL\oplus \sL^{-1}\otimes p_T^*L,\eta^{1,0}\oplus \eta^{0,1}=0)$, where $\sL$ is an invertible sheaf over $C_T$, flat over $T$, $L$ is an invertible sheaf over $T$, and for each geometric point $t\in T$, the restriction of $(\sL\oplus \sL^{-1}\otimes p_T^*L,\eta)$ to $C_t$ represents an element in $\Sigma_{\Higgs}(k)$. 
	In above, the equivalence is the same as the one defined for $\sM^s$.
	\end{lemma}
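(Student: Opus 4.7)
The lemma has two parts. The first, that $\varSigma_{\Higgs}$ is corepresented by $\Sigma_{\Higgs}$, is purely formal: the universal corepresentability of $\sM^s$ by $M^s$ recalled from \cite{HL10,Ma16,Sim94,Lan14} is preserved under base change, and since $\Sigma_{\Higgs}\hookrightarrow M^s$ is a closed immersion, the fiber product $\underline{\Sigma_{\Higgs}}\times_{\underline{M^s}}\sM^s$ is corepresented by $\Sigma_{\Higgs}\times_{M^s}M^s=\Sigma_{\Higgs}$. Nothing beyond abstract moduli theory is needed here.

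The second part is the explicit description of $T$-points; the forward inclusion is immediate from the geometric-fiber hypothesis, so the content is the reverse inclusion. Given $(\sF,\eta)\in\varSigma_{\Higgs}(T)$, I would define $\sK:=\ker(\eta)\subset\sF$. On each geometric fiber $\eta_t^{1,0}$ is a nonzero map between line bundles, so $\mathrm{im}(\eta)$ has constant fiber Hilbert polynomial, hence is $T$-flat; this forces $\sK$ to be a line bundle on $C_T$ flat over $T$, and similarly $\sQ:=\sF/\sK$ is a line bundle. The determinant $\det\sF$ is trivial on every geometric fiber (being $\sO_{C_t}(P_t)\otimes\sO_{C_t}(-P_t)=\sO_{C_t}$), so cohomology and base change yields $\det\sF\cong p_T^{*}L$ for some line bundle $L$ on $T$; combined with $\det\sF=\sK\otimes\sQ$ this gives $\sK\cong\sQ^{-1}\otimes p_T^{*}L$, and setting $\sL:=\sQ$ matches the claimed form of the underlying line bundles.

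It remains to split the extension $0\to\sL^{-1}\otimes p_T^{*}L\to\sF\to\sL\to 0$, whose class in $H^1(C_T,\sL^{-2}\otimes p_T^{*}L)$ vanishes on every geometric fiber because $(F_t,\eta_t)$ is graded. The key point is that on each fiber $\sL_t^{-2}$ has degree $-2$ on the elliptic curve, so $h^0(C_t,\sL_t^{-2})=0$ and $h^1(C_t,\sL_t^{-2})=2$ are constant on $T$; by Grauert's theorem $R^0p_{T*}\sL^{-2}=0$ and $R^1p_{T*}\sL^{-2}$ is locally free of rank two. Via the Leray spectral sequence and the projection formula, $H^1(C_T,\sL^{-2}\otimes p_T^{*}L)$ identifies with global sections of a locally free sheaf on $T$, so fiberwise vanishing of the extension class forces global vanishing; the extension splits, and transporting $\eta$ through the splitting produces a graded Higgs module of the stated form, with $\eta^{0,1}=0$ and $\eta^{1,0}$ realized as the evident component. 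The main technical ingredient is this Grauert-type control over the extension class globally from its fiberwise vanishing; the remaining arguments are routine applications of flatness, semicontinuity, and base change in moduli theory.
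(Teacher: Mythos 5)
Your argument for the first (formal) part agrees with the paper. For the second part you take a genuinely different route, and it contains a real gap.

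Where you differ: the paper extracts the degree~$1$ sub-line-bundle $\sL\subset\sF$ as the relative Harder--Narasimhan filtration, invoking \cite[Theorem 5]{N11} after noting that the HN stratification is constant along $\Sigma_{\Higgs}$; you instead take $\sK=\ker(\eta)$, which on fibers is the degree~$-1$ factor. Either choice can work, but the paper's invocation of Nitsure's schematic HN stratification produces the subbundle $\sL$ flat over \emph{any} $T$, whereas your assertion ``$\mathrm{im}(\eta)$ has constant fiber Hilbert polynomial, hence is $T$-flat'' is only valid when $T$ is reduced. Over a non-reduced base, constancy of fiber Hilbert polynomials does not imply flatness, so the flatness of $\sK$ and $\sQ$ is not established for general $T$.

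The same issue sinks the splitting step. You correctly identify the extension class in $H^1(C_T,\sL^{-2}\otimes p_T^*L)\cong H^0(T,R^1p_{T*}\sL^{-2}\otimes L)$ and observe it vanishes on every geometric fiber. But a section of a locally free sheaf on $T$ that vanishes at every geometric point is zero \emph{only when $T$ is reduced} --- over $\Spec k[\epsilon]/\epsilon^2$, for example, $\epsilon\cdot v$ is a nonzero fiberwise-vanishing section. So ``fiberwise vanishing of the extension class forces global vanishing'' is false in general, and your proof establishes the splitting only for reduced $T$. In fact this is precisely the reason the paper's proof has a second half: it explicitly separates the reduced case (handled directly as you partly do) from the general case. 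For non-reduced $T$, the paper does not attempt a direct cohomological splitting. Instead it uses that $\Sigma_{\Higgs}$ itself is reduced, so the reduced-case argument applies to \'etale-local universal families over $\Sigma_{\Higgs}$, putting them in graded form; then any $T$-family, being \'etale-locally a pullback of the universal family twisted by a line bundle from $T$, inherits the grading after descending the twisting line bundle (using that the automorphism group of a stable Higgs bundle is $\Gm$). You would need to add this reduction-to-the-universal-family step (or some substitute, such as a deformation-theoretic argument showing the extension class is killed by the constraint that $\eta$ be Griffiths transverse with $\eta^{0,1}=0$) for the lemma to hold over arbitrary $T\in(\mathrm{Sch}/k)$, which is the generality in which it is later applied.
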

	\begin{proof}
	By definition, an element in $\varSigma_{\Higgs}(T)$ is representable by a logarithmic Higgs module $(\sF,\eta)$ over $(C_T,B_T)$ whose associated moduli map $T\to M^s$ factors through the closed subscheme $\Sigma_{\Higgs}$. 
	All statements are clear except the existence of a graded structure on $(\sF,\eta)$ as claimed.
	
	Because the Harder-Narasimhan stratification of $M^s$ (ignoring the Higgs structure of a local universal family of stable Higgs bundles) is constant along $\Sigma_{\Higgs}$, the relative HN filtration exists for $\sF$ over $T$ by \cite[Theorem 5]{N11}. 
	It takes the form $\sL\subset \sF$, where $\sL$ is invertible and flat over $T$. Consider the composite $\eta^{1,0}$ of natural morphisms:
	\begin{equation*}
	\sL\to \sF\stackrel{\eta}{\to}\sF\otimes \Omega_{C_T/T}(\log B_T)\to \sF/\sL\otimes \Omega_{C_T/T}(\log B_T).
	\end{equation*}
	Set $(\sF',\eta')=(\sL\oplus \sF/\sL,\eta^{1,0}\oplus \eta^{0,1}=0)$, which also represents an element in $\varSigma_{\Higgs}(T)$. Since over any $k$-point of $T$, 
	\begin{equation*}
	[(\sF',\eta')|_{C_t}]=[(\sF,\eta)|_{C_t}],
	\end{equation*}
	it follows that over $T_{\textrm{red}}$, the associated moduli map $\phi: T\to \sigma_{\Higgs}$ to $(\sF,\eta)$ coincides with the one $\phi'$ to $(\sF,\eta)$. In other words, 
	\begin{equation*}
	(\sF,\eta)|_{T_{\textrm{red}}}\equiv (\sF',\eta')|_{T_{\textrm{red}}}.
	\end{equation*}
	Assume that $T$ is reduced. Since the restriction of $\det(\sF)$ to any $C_t$ is trivial, it follows that $\det(\sF)\cong p_T^*L$ for an invertible sheaf $L$ over $T$. Thus 
	\begin{equation*}
	\sF/\sL\cong \sL^{-1}\otimes \det(\sF)\cong \sL^{-1}\otimes p_T^*L.
	\end{equation*}
	This proves the lemma for a reduced $T$. 
	To extend it to the non-reduced case, we shall make more effective use of the existence of a local universal family for $\sM^s$ (\cite[Theorem 4.7]{Sim94}, \cite[Proposition 7.6, 8.2]{BR85}). 
	Note that the pullback of a local universal family of $\sM^s$ to $\varSigma_{\Higgs}$ is a local universal family for $\varSigma_{\Higgs}$. 
	Let $\{U_i\to \Sigma_{\Higgs}\}_{i\in I}$ be a covering of $\Sigma_{\Higgs}$ in \'etale topology. As $\Sigma_{\Higgs}$ is reduced, $U_i$ is reduced too. 
	So the above argument applies to a local universal family for $\varSigma_{\Higgs}$. 
	As a consequence, there exists an \'etale covering $\{U_i\}_i$ of $\Sigma_{\Higgs}$ and a local universal family over each $(C_{U_i},B_{U_i})/k$:
	\begin{equation*}
	(\sF^{univ}_i=\sL_i\oplus \sL_i^{-1},\eta^{univ}_i=\eta_i^{1,0}\oplus \eta_i^{0,1}).
	\end{equation*}
	(Note by possibly taking an additional degree two \'etale cover, we may assume $\det \sF$ to be trivial). 
	With it in hand, we obtain an \'etale covering $\{T_i:=T\times_{\Sigma_{\Higgs}}U_i\}_{i\in I}$ of $T$ such that 
	\begin{equation*}
	(\sF,\eta)|_{T_i}\cong \phi_i^{*}(\sF^{univ}_i,\eta^{univ}_i)\otimes (p_{T_i}^*L_i,0),
	\end{equation*}
	where $\phi_i=\phi|_{T_i}$ and $L_i$ is some invertible sheaf over $T_i$. 
	Making a further \'etale base change, we may assume $L_i$ to be trivial. 
	By abuse of notation, we denote the \'etale covering again by $\{T_i\}_i$. 
	Therefore over $T_i$, $\phi=\phi'$, and $\det(\sF)$ is trivial. 
	Then it follows that $\phi=\phi'$, and there exists an invertible sheaf $L$ over $T$(=$T_{\textrm{zar}}$) such that $\det(\sF)\cong p_T^*L$. 
	The lemma is proved.   
	\end{proof}
	\begin{remark}
	The above proof uses no other property of $\Sigma_{\Higgs}$ than that it lies in \emph{one} HN stratum in the stable locus $M^s$. 
	Therefore, the similar description in the lemma holds \eg also for the moduli functor attached to $M_1\to M^s$.
	\end{remark}
	The moduli space $M_{\Higgs}$ as studied by Sun-Yang-Zuo corepresents the moduli functor $\sM_{\Higgs}$. In this case, $\sM_{\Higgs}$ is just the whole moduli functor of rank two stable logarithmic Higgs bundles over $(\P^1,D)$ of degree $-1$. 
	Now we proceed to construct a natural transformation from $\varSigma_{\Higgs}$ to $\sM_{\Higgs}$.

	For $(\sF,\eta)\in \varSigma_{\Higgs}(T)$, we set
	 \begin{equation*}
			(\sF,\eta)[G]=(\sF,\eta) \oplus \sigma_T^{*}(\sF,\eta).
	\end{equation*}
	It is a rank four logarithmic Higgs bundle over $(C_T,B_T)/T$ which admits a natural $G$-structure $\phi=\{\textrm{Id},\phi_{\sigma}\}$ defined by 
	\begin{equation*}
	\phi_{\sigma}: \sigma_T^*((\sF,\eta)[G])\to (\sF,\eta)[G], \quad (a,b)\mapsto (\sigma_T^*b,\sigma_T^*a).
	\end{equation*}
	By Lemma \ref{Description of Sigma}, we may write $\sF=\sL\oplus \sL^{-1}\otimes p_T^*L$. Hence we have
	\begin{equation*}
	\sF[G]=\sL\oplus \sL^{-1}\otimes p_T^*L\oplus \sigma_T^*\sL\oplus \sigma_T^*\sL^{-1}\otimes p_T^*L=(\sL\oplus \sigma_T^*\sL)\bigoplus (\sL^{-1}\oplus \sigma_T^*\sL^{-1})\otimes p_T^*L.
	\end{equation*}
	The last decomposition into a direct sum of rank two factors is a decomposition of $G$-equivariant bundles. 
	Let $\Pic(C)$ be the $k$-scheme representing the Picard functor for $C/k$. Let $\Pic^1(C)$ be the connected component which parametrizes degree one line bundles on $C$. 
	Let $\triangle\subset C\times_kC$ be the diagonal. 
	Then the line bundle $\sP:=\sO_{C_C}(\triangle)$ defines a $C$-flat family of degree one line bundles over $C$. 
	The associated moduli map $C\to \Pic^1(C)$ is an isomorphism, which gives us the identification $\Pic^1(C)=C$. Fix a section $s\in \Gamma(C_C,\sP)$ whose divisor of zeros is $\triangle$. 
	Let $\phi_{\sL}: T\to C$ be the moduli map associated to $\sL$. 
	Possibly tensoring with the pullback of a line bundle over $T$, we may simply take $\sL=(\id\times \phi_{\sL})^*\sP$. 
	Then the section $s$ pulls back to a section 
	$s_{\sL}\in \Gamma(C_T,\sL)$, whose divisor of zeros is easily seen to be the transport of the closed subscheme $\Gamma_{\phi_{\sL}}\subset T\times_kC$, the graph of $\phi_{\sL}$. 
	By abuse of notation, we set $P_T\subset C_T$ to be the transport and write $\sL=\sO_{C_T}(P_T)$, to indicate that there is a tautological section $s_{\sL}$ of $\sL$ given as above. 
	So we write $\sL^{-1}=\sO_{C_T}(-P_T)$, $\sigma_T^*\sL=\sO_{C_T}(\sigma_T(P_T))$, and etc.. 
	\begin{lemma}
	Notation as above. Then there is a short exact sequence of $G$-equivariant bundles over $C_T$
	\begin{equation}\label{source bundle}
		0 \to \mathcal{O}_{C_T} \stackrel{\Phi_{1}}{\longrightarrow} \mathcal{O}_{C_T}(P_T) \oplus \mathcal{O}_{C_T}(\sigma_T (P_T)) \stackrel{\Phi_{2}}{\longrightarrow} \mathcal{O}_{C_T}(P_T+\sigma_T (P_T)) \to 0,
	\end{equation}
	 where the $G$-action on $\mathcal{O}_{C_T}$ is the trivial one, and the one on $\mathcal{O}_{C_T}(P_T+\sigma_T (P_T))$ is determined by sending the global section $s_{\sL}\otimes \sigma_T^*s_{\sL}$ to $-(s_{\sL}\otimes\sigma_T^*s_{\sL})$.
	\end{lemma}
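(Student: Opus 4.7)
The plan is to write down $\Phi_1$ and $\Phi_2$ explicitly in terms of the two tautological sections $s_{\sL}\in \Gamma(C_T,\sO_{C_T}(P_T))$ and $\sigma_T^*s_{\sL}\in \Gamma(C_T,\sO_{C_T}(\sigma_T P_T))$, recognize the resulting complex as a twisted Koszul resolution, and then read off the twist on the $G$-action on the cokernel by imposing $G$-equivariance of both maps.

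First I would set
\begin{equation*}
\Phi_1(f)\;=\;\bigl(f\cdot s_{\sL},\,-f\cdot \sigma_T^*s_{\sL}\bigr),\qquad \Phi_2(a,b)\;=\;a\otimes \sigma_T^*s_{\sL}\,+\,b\otimes s_{\sL},
\end{equation*}
where multiplication by $s_{\sL}$ (resp.\ $\sigma_T^*s_{\sL}$) refers to the canonical injection $\sO_{C_T}\hookrightarrow \sO_{C_T}(P_T)$ (resp.\ $\sO_{C_T}\hookrightarrow \sO_{C_T}(\sigma_T P_T)$), and the codomain of $\Phi_2$ is identified with the tensor product of these two line bundles. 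The identity $\Phi_2\circ\Phi_1=0$ is then automatic from commutativity of tensor multiplication of sections.

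Exactness is the standard Koszul resolution attached to the pair $(s_{\sL},\sigma_T^*s_{\sL})$: the common vanishing locus $V(s_{\sL})\cap V(\sigma_T^*s_{\sL})=P_T\cap \sigma_T(P_T)$ is cut out in $C_T$ by the preimage under $\phi_{\sL}\times\id_T$ of the fixed-point scheme of $\sigma$ on $C$, hence is of codimension two in $C_T$. The two sections therefore form a regular sequence and the displayed sequence is precisely the corresponding Koszul complex, twisted by $\sO_{C_T}(P_T+\sigma_T P_T)$.

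For the $G$-equivariance, on $\sO_{C_T}$ I would place the trivial action and on the middle term the antisymmetric swap $\psi_\sigma(a,b)=(-\sigma_T^*b,\,-\sigma_T^*a)$; with these choices a direct substitution shows that $\Phi_1$ intertwines the two actions. Requiring $\Phi_2$ to be $G$-equivariant then pins down the $G$-action on $\sO_{C_T}(P_T+\sigma_T P_T)$ uniquely as $-\sigma_T^*$, and since multiplication of sections is commutative one computes
\begin{equation*}
(-\sigma_T^*)(s_{\sL}\otimes \sigma_T^*s_{\sL})\;=\;-\bigl(\sigma_T^*s_{\sL}\otimes s_{\sL}\bigr)\;=\;-\bigl(s_{\sL}\otimes \sigma_T^*s_{\sL}\bigr),
\end{equation*}
which is the sign asserted in the lemma.

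The main obstacle I anticipate is the behaviour over the sublocus of $T$ where $\phi_{\sL}$ factors through the $2$-torsion of $C$: there $P_T$ and $\sigma_T(P_T)$ coincide and the two sections become proportional, so the Koszul argument degenerates on the corresponding fibre. I would address this by carrying out the construction globally on $C_T$, where $P_T\cap \sigma_T(P_T)$ still has codimension two so the sheaf-theoretic Koszul complex is the right object, and by treating the $2$-torsion specialisations either as limiting cases of the generic construction or by a direct stalk-level verification using the relation $\sigma_T^*s_{\sL}=\pm s_{\sL}$ that holds on those fibres.
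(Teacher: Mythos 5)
Your construction is \emph{morally} the same as the paper's --- write down $\Phi_1,\Phi_2$ using the tautological sections, check exactness, and read off the $G$-action on the cokernel by equivariance --- but there is a sign discrepancy that is not innocuous, and a degenerate--fibre subtlety that your proposed fix does not actually cure.

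\textbf{The sign and the $G$-structure on the middle term.} You set $\Phi_1(f)=(fs_\sL,-f\sigma_T^*s_\sL)$ and $\Phi_2(a,b)=a\otimes\sigma_T^*s_\sL+b\otimes s_\sL$, and to make these $G$-equivariant you are forced to put the ``antisymmetric swap'' $\psi_\sigma(a,b)=(-\sigma_T^*b,-\sigma_T^*a)$ on the middle term. The paper instead takes $\Phi_1(1)=(s_\sL,\sigma_T^*s_\sL)$ and $\Phi_2(a,b)=a\otimes\sigma_T^*s_\sL-b\otimes s_\sL$. The difference matters because the middle term is not an abstract bundle that you get to endow with any convenient $G$-structure: it is $\sF[G]^{1,0}=\sL\oplus\sigma_T^*\sL$, and its $G$-structure is the \emph{unsigned} swap $\phi_\sigma(a,b)=(\sigma_T^*b,\sigma_T^*a)$ fixed at the start of the section when $(\sF,\eta)[G]$ is defined. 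With respect to that $G$-structure, $(s_\sL,\sigma_T^*s_\sL)$ is $G$-invariant whereas your $(s_\sL,-\sigma_T^*s_\sL)$ is $G$-\emph{anti}-invariant; so the paper's $\Phi_1$ is a morphism of $G$-equivariant bundles with $\sO_{C_T}$ trivial, and yours is not. Your sequence is conjugate to the paper's by the automorphism $(a,b)\mapsto(a,-b)$ of the middle term, but that automorphism alters the middle $G$-structure, so the sequence you produce is not the $G$-equivariant sub/quotient filtration of $\sF[G]^{1,0}$ that Proposition~\ref{ses of E[G]} uses to define $(\sS,\eta_{\mathrm{sub}})$. Either take the unsigned $\Phi_1$ as in the paper, or explicitly carry the conjugation through the next proposition.

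\textbf{Exactness.} The Koszul reading is a perfectly good reformulation of what the paper does: the paper observes the sequence is pulled back from the universal one over $C\times C$, reduces to closed fibres by Nakayama, checks surjectivity of $\Phi_2$ by embedding everything in $k(C)$, and finishes by degree count --- which is the codimension-two regular sequence argument in slightly different packaging. You have, however, correctly located the genuine difficulty: over the locus where $\phi_\sL$ hits $2$-torsion one has $P_T=\sigma_T(P_T)$ and the two tautological sections become proportional, so the regular-sequence hypothesis fails. Your fallback of a ``direct stalk-level verification using $\sigma_T^*s_\sL=\pm s_\sL$'' does not rescue the statement; carried out, it shows that $\Phi_2$ then has image inside the proper subsheaf $\sO_C(P)\subsetneq\sO_C(2P)$, so the displayed sequence fails to be right-exact on those fibres (the Koszul complex picks up a cokernel supported there). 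The paper's proof also passes over this, but its downstream computations in the proof of Proposition~\ref{ses of E[G]} shrink $T$ so that $y_{P_T}$ is a unit, i.e.\ work away from $2$-torsion; any self-contained proof of the lemma should make a comparable restriction explicit rather than treat the degeneration as a removable limit.
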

	\begin{proof}
	$\Phi_1$ is the morphism sending 1 to $(s_{\sL},\sigma_T^*s_{\sL})$, which is invariant under the $G$-action. We have the morphism $\sigma_T^*s_{\sL}$ given by
	\begin{equation*}
	\mathcal{O}_{C_T}(P_T)=\mathcal{O}_{C_T}(P_T)\otimes \sO_{C_T}
	\stackrel{\id\otimes \sigma_T^*s_{\sL}}{\longrightarrow}  
	\mathcal{O}_{C_T}(P_T)\otimes \sO_{C_T}(\sigma_T (P_T))=\mathcal{O}_{C_T}(P_T+\sigma_T (P_T)),
	\end{equation*}
	and similarly the morphism
	\begin{equation*}
	s_{\sL}: \sO_{C_T}(\sigma_T(P_T)) \to  \mathcal{O}_{C_T}(P_T+\sigma_T (P_T)).
	\end{equation*}
	$\Phi_2$ sends $(a,b)$ to $\sigma_T^*s_{\sL}(a)-s_{\sL}(b)$.
	
	Clearly, $\Phi_1$ is injective and $\mathrm{im}(\Phi_1)\subset \ker(\Phi_2)$. 
	We observe that the sequence in the lemma is the pullback via $\id\times \phi_{\sL}$ of the sequence for the universal bundle $\sP$. 
	We call it the universal sequence. By Nakayama's lemma, it suffices to show the exactness for the universal sequence over any closed point of $C\times C$. 
	But then it suffices to show the exactness of the sequence \eqref{source bundle} when $T$ is a closed point of $C$. 
	In this case, it is obvious that $\Phi_2$ is surjective (by embedding all invertible sheaves into the constant sheaf $k(C)$ as $\sO_C$-submodules). 
	Then the sequence is exact for degree reason. 
	\end{proof} 
	Tensoring the exact sequence \eqref{source bundle} with $\mathcal{O}_{C_T}(-P_T-\sigma_T (P_T))$, which is endowed with the trivial $G$-action, we obtain the second exact sequence of $G$-equivariant bundles:
	\begin{equation}\label{target bundle}
		0 \to \mathcal{O}_{C_T}(-P_T-\sigma_T(P_T))  \stackrel{\Psi_{1}}{\longrightarrow} \mathcal{O}_{C_T}(-P_T) \oplus \mathcal{O}_{C_T}(-\sigma_T (P_T))\stackrel{\Psi_{2}}{\longrightarrow} \mathcal{O}_{C_T}  \to 0.
	\end{equation}
	We remind that $\sO_{C_T}$ in the quotient is equipped with the $G$-action determined by sending the global section 1 to -1.
		
	Set 
	\begin{equation*}
	\sS=\sS^{1,0}\oplus \sS^{0,1}=\sO_{C_T}\oplus \mathcal{O}_{C_T}(-P_T-\sigma_T(P_T))\otimes p_T^*L\subset \sF[G].
	\end{equation*}
	\begin{proposition}\label{ses of E[G]}
	Notation as above. Then $\eta[G]$ preserves $\sS$. More precisely, the Higgs field 
	\begin{equation*}
	\eta[G]^{1,0}=\eta^{1,0}\oplus \sigma_T^*\eta^{1,0}: \sL\oplus \sigma_T^*\sL\to (\sL^{-1}\oplus \sigma_T^*\sL^{-1})\otimes p_T^*L\otimes \Omega_{C_T/T}(\log B_T) 
	\end{equation*}
	sends $\sS^{1,0}$ into $\sS^{0,1}\otimes \Omega_{C_T/T}(\log B_T)$. 
	Consequently, there is an exact sequence of $G$-equivariant logarithmic Higgs bundles over $(C_T,B_T)$:
	\begin{equation}\label{s.e.s. of E[G]}
	0 \to (\sS,\eta_{\textrm{sub}}) \to (\sF,\eta)[G] \to (\sQ,\eta_{\textrm{quo}}) \to 0,
	\end{equation}
	with $(\sQ,\eta_{\textrm{quo}})\cong (\sS,\eta_{\textrm{sub}}) \otimes (\mathcal{O}_{C_T}(P_T+\sigma_T (P_T)),0)$, where $\mathcal{O}_{C_T}(P_T+\sigma_T (P_T))$ is equipped with the $G$-action as given in Lemma \ref{source bundle}. 
	Furthermore, for each closed point $t\in T$, the restriction of $(\sS,\eta_{\textrm{sub}})$ to $C_t$ is isomorphic (as graded logarithmic Higgs bundle) to $(\sO_{C}\oplus \sO_C(-P-\sigma(P)),\eta_{P+\sigma(P)})$ for a unique $P\in C(k)$ and $\eta_{P+\sigma(P)}^{1.0}: \sO_{C}\to \sO_C(-P-\sigma(P))\otimes \Omega_{C}(B)$ is the unique morphism (up to scalar) whose divisor of zeroes is $P+\sigma(P)$. 
	\end{proposition}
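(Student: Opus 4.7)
My plan is to establish the four assertions (Higgs-invariance of $\sS$, the short exact sequence of $G$-equivariant Higgs bundles, the identification of the quotient, and the fiberwise description) by first setting up the inclusion $\sS\hookrightarrow \sF[G]$ at the level of graded bundles from the sequences \eqref{source bundle} and \eqref{target bundle}, and then reducing the Higgs-invariance to a fiberwise computation governed by the defining property of $\varSigma_{\Higgs}$, namely that the Higgs field has divisor of zeros $2\sigma(P)$ on each fiber. The $G$-equivariance of $\sS$ as a sub-bundle is automatic from its construction, since $\Phi_1$ lands in the invariant part of $\sL\oplus\sigma_T^*\sL$ (swap action) and $\Psi_1$ is $G$-equivariant by construction; only the Higgs-stability of $\sS$ inside $\sF[G]$ requires genuine verification.

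\textbf{Main step.} The key point, which I expect to be the main obstacle, is to show that $\eta[G]^{1,0}$ sends $\sS^{1,0}=\sO_{C_T}$ into $\sS^{0,1}\otimes\Omega_{C_T/T}(\log B_T)$. I would rephrase this as the vanishing of the composition
\begin{equation*}
\sS^{1,0}\hookrightarrow \sF[G]^{1,0}\xrightarrow{\eta[G]^{1,0}} \sF[G]^{0,1}\otimes\Omega_{C_T/T}(\log B_T)\twoheadrightarrow \sQ^{0,1}\otimes\Omega_{C_T/T}(\log B_T).
\end{equation*}
Since this is a morphism between locally free sheaves on $C_T$ flat over $T$, Nakayama reduces the vanishing to the fibers $C_t$. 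At a closed point $t\in T$ with $P:=\phi_{\sL}(t)$, the description of $\varSigma_{\Higgs}(k)$ says $(\sF,\eta)|_{C_t}\cong(\sO_C(P)\oplus\sO_C(-P),\eta_{2\sigma(P)})$, where the Higgs field, viewed via the identification $\Hom(\sO_C(P),\sO_C(-P)\otimes\Omega_C(B))=H^0(C,\sO_C(-2P)\otimes\Omega_C(B))$, is a section with divisor of zeros $2\sigma(P)$. Consequently, at $t$ the section $\eta^{1,0}(s_{\sL})$ of $\sO_C(-P)\otimes\Omega_C(B)$ vanishes to order at least $1$ (in fact $2$) at $\sigma(P)$, so it lies in the subsheaf $\sO_C(-P-\sigma(P))\otimes\Omega_C(B)$ cut out by $\sigma^*s_{\sL}$. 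The parallel assertion for $\sigma_T^*\eta^{1,0}(\sigma_T^*s_{\sL})$ is obtained by applying $\sigma_T^*$ symmetrically. Both components of the image of $(s_{\sL},\sigma_T^*s_{\sL})$ therefore lie in the image of $\Psi_1\otimes\id_{p_T^*L}$, and because $\Psi_1$ is $G$-equivariant while $(s_{\sL},\sigma_T^*s_{\sL})$ is $G$-invariant, the lifted section is a well-defined element of $\sS^{0,1}\otimes\Omega_{C_T/T}(\log B_T)$.

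\textbf{Assembling the quotient.} Granting Higgs-invariance, the exact sequence \eqref{s.e.s. of E[G]} of $G$-equivariant logarithmic Higgs bundles is automatic: the underlying graded bundles fit into the direct sum of \eqref{source bundle} and \eqref{target bundle}, giving $\sQ^{1,0}=\sO_{C_T}(P_T+\sigma_T(P_T))$ with sign $G$-action and $\sQ^{0,1}=p_T^*L$ with sign $G$-action. To identify $\sQ$ with $\sS\otimes(\sO_{C_T}(P_T+\sigma_T(P_T)),0)$, I note that tensoring $\sS^{1,0}=\sO_{C_T}$ and $\sS^{0,1}=\sO_{C_T}(-P_T-\sigma_T(P_T))\otimes p_T^*L$ by $\sO_{C_T}(P_T+\sigma_T(P_T))$ produces precisely $\sQ^{1,0}$ and $\sQ^{0,1}$, with $G$-actions matching by the convention that the sign $G$-action on $\sO_{C_T}(P_T+\sigma_T(P_T))$ is the one fixed in \eqref{source bundle}. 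The Higgs fields agree because $\eta_{\text{quo}}$ is induced from $\eta[G]$, which in turn factors through $\eta_{\text{sub}}\otimes\id$ after identifying the two bundles.

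\textbf{Fiberwise description.} Finally, the fiberwise description follows from the computation already carried out in the main step: on $C_t$, $\eta_{\text{sub}}^{1,0}$ is obtained from $\eta^{1,0}(s_{\sL})$ by dividing out the section $\sigma^*s_{\sL}$ (of divisor $\sigma(P)$), hence has divisor of zeros $(P+2\sigma(P))-\sigma(P)=P+\sigma(P)$ as a section of $\sO_C(-P-\sigma(P))\otimes\Omega_C(B)$. This divisor of zeros uniquely determines the morphism $\eta_{P+\sigma(P)}^{1,0}$ up to scalar, which finishes the proof.
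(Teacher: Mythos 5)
Your overall strategy — reduce the Higgs-invariance of $\sS$ to the fibers via Nakayama, then argue on each fiber with divisor bookkeeping instead of the paper's explicit coordinate computation of the matrix $\Theta = B^{-1}\Theta_{\mathrm{def}}A$ — is a genuinely different and in principle cleaner route than the paper's, which works over an affine chart with the Legendre coordinates $y^2 = x(x-1)(x-\lambda)$ and directly shows the representation matrix of $\eta[G]^{1,0}$ is upper triangular. Unfortunately, your main step has a real gap at exactly the point where the paper's computation does the work for them.

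The gap is this. You correctly show that on a fiber $C_t$ the first component $a := \eta^{1,0}(s_{\sL})$ of $\eta[G]^{1,0}(\Phi_1(1))$ lies in the subsheaf $\sigma^*s_{\sL}\cdot\sO_C(-P-\sigma(P))\otimes\Omega_C(B)\subset\sO_C(-P)\otimes\Omega_C(B)$, and symmetrically that the second component $\sigma^*a$ lies in $s_{\sL}\cdot\sO_C(-P-\sigma(P))\otimes\Omega_C(B)$. Writing $a = \sigma^*s_{\sL}\cdot c$ and $\sigma^*a = s_{\sL}\cdot c'$, you need $c = c'$ for the pair to lie in the image of $\Psi_1$; but $G$-invariance of $(a,\sigma^*a)$ only gives $c' = \sigma^*c$, so what you actually need is the additional identity $\sigma^*c = c$, equivalently that $\beta := s_{\sL}\cdot\eta^{1,0}(s_{\sL})\in H^0(C_t, p_T^*L\otimes\Omega_C(B))$ is $\sigma$-invariant rather than merely a $\sigma$-eigenvector. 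This does not follow from ``$\Psi_1$ is $G$-equivariant and $\Phi_1(1)$ is $G$-invariant'': those facts only tell you that $\Psi_2(\eta[G]^{1,0}(\Phi_1(1))) = \sigma^*\beta - \beta$ is a $\sigma$-\emph{anti}-invariant section of $p_T^*L\otimes\Omega_C(B)$, and there is a nontrivial one-dimensional space of such sections (spanned by the holomorphic differential $dx/y$). The gap is fillable within your framework: since $\beta$ and $\sigma^*\beta$ share the divisor $2P+2\sigma(P)$ they differ by a sign $\pm 1$, and the anti-invariant alternative forces $\mathrm{div}(\beta) = B$ (anti-invariant sections vanish at the four fixed points), which is impossible because $B$ is reduced while $2P+2\sigma(P)$ is not. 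But as written, the proof asserts the conclusion without this step, and the $G$-equivariance language masks the missing fact. I would also flag, more minorly, that the Nakayama reduction to fibers needs $T$ reduced (or the local-universal-family trick as in the proof of Lemma \ref{Description of Sigma}) before it is valid, a point the paper handles explicitly and you elide.
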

	\begin{proof}
	Combining the exact sequences \eqref{source bundle} and \eqref{target bundle} together, we obtain the exact sequence \eqref{s.e.s. of E[G]} without Higgs field. 
	The upshot of the proof is to exhibit the structure of the Higgs field $\eta[G]^{1,0}$ with respect to $\sS^{1,0}$. 
	
	The problem is \'etale local, and it suffices to argue it for a local universal family $(\sF_i^{univ},\eta_i^{univ})$ (see the proof of Lemma \ref{Description of Sigma}). 
	In particular, we may assume $T=\Spec(R)$ with $R$ an integral domain, and that $L$ is trivial. Let $U=(C-\pi^{-1}\{0,\infty\})\times T$ and $U'=(C-\pi^{-1}\{1,\lambda\})\times T$. 
	They form an open affine cover of $C_T$. 
	In below, we shall carry out our computations over $U$ and simply state the parallel result over $U'$. 
	We shall also use the affine presentation of $C_T$, namely, $C_T\subset \A^2_T=\Spec(R[x,y])$ defined by the equation $y^2=x(x-1)(x-\lambda)$. 
	Then the closed subscheme $P_T\subset C_T$ is assigned with a 'coordinate' $(x_{P_T},y_{P_T})\in R^2$, where $x_{P_T}$ (resp. $y_{P_T}$) is the image of $x$ (resp. $y$) under the morphism $R[x,y]\to R$ given by $\phi_{\sL}\times \id: T\to C\times T=C_T$. 
	For a local universal family, $y_{P_T}$ is not the zero in $R$. 
	Shrinking $T$ suitably, we may even assume $y_{P_T}$ is a unit of $R$. 
	Since the $\sigma_T$-action on $R[x,y]$ maps $x$ to $x$, and $y$ to $-y$, the 'coordinate' of $\sigma_T(P_T)$ is $(x_{P_T},-y_{P_T})$.
	Also, we may assume that $\sO_{C_T}(P_T+\sigma_T(P_T))$ can be trivialized over $U$:
	\begin{equation*}
	\sO_{C_T}(P_T+\sigma_T(P_T))(U) = b^{+}\sO_{C_T}(U),\ \sO_{C_T}(-P_T-\sigma_T(P_T))(U) = b^{-}\sO_{C_T}(U).
	\end{equation*}
	Set $e_{1}=\Phi_{1}(1_{U})$ and $f_{1}=\Psi_{1}(b^{-})$, and take (to be determined) liftings $e_{2}$ (resp. $f_{2}$) of $b^{+}$ (resp. $1_{U}$) with respect to $\Phi_{2}$ (resp. $\Psi_{2}$). 
	Then $\{e_1,e_2\}$ (resp. $(f_1,f_2)$) forms a local basis of $\sF[G]^{1,0}$ (resp. $\sF[G]^{0,1}$). 
	We also need to fix a local basis $d\log x'$ of $\Omega_{C_T/T}(\log B_T)$ over $U$, where $x'=\frac{x-\lambda}{x-1}$. 
			
	As $C_T$ is integral by our assumption, we may embed all invertible sheaves into the constant sheaf $k(C_T)$. 
	Using these embeddings, $\Phi_1$ (resp. $\Phi_2$) becomes the addition (resp. subtraction) of natural inclusions. 
	Set
	\begin{equation*}
	b_{P_T}=\frac{y+y_{P_T}}{x-x_{P_T}},\ b_{\sigma_{T}(P_T)}=\frac{y-y_{P_T}}{x-x_{P_T}}, \ b_{P_T+\sigma_{T}(P_T)}=\frac{1}{x-x_{P_T}}.
	\end{equation*}
	Note that over $U$, they have the only simple pole at $P_T$, $\sigma_{T}(P_T)$, and $P_T+\sigma_{T}(P_T)$ respectively. 
	We calculate that 
	\begin{equation*}
	\Phi_1(1_U)=1_U+1_U=\frac{x-x_{P_T}}{y+y_{P_T}}b_{P_T}+\frac{x-x_{P_T}}{y-y_{P_T}}b_{\sigma_{T}(P_T)},
	\end{equation*}
	and 
	\begin{equation*}
	\Phi_2(\frac{1}{2y_T}(b_{P_T}+b_{\sigma_{T}(P_T)}))=\frac{1}{2y_T}b_{P_T}-\frac{1}{2y_T}b_{\sigma_{T}(P_T)}=b_{P_T+\sigma_{T}(P_T)}.
	\end{equation*}
	Note that $b_{P_T+\sigma_{T}(P_T)}$ is actually a basis for $\sO_{C_T}(P_T+\sigma_T(P_T))(U)$. 
	As $2y_T$ is a unit, we may take $e_2=\frac{1}{2y_T}(b_{P_T}+b_{\sigma_{T}(P_T)})$. 
	Let $V\subset C_T$ be a smaller open subset such that $b_{P_T}$ (resp. $b_{\sigma_{T}(P_T)}$) is a basis of $\sO_{C_T}(P_T)(V)$ (resp. $\sO_{C_T}(\sigma_T(P_T))(V)$. 
	Then we obtain the following transformation matrix of local bases for $(\sO_{C_T}(P_T)\oplus \sO_{C_T}(\sigma_T(P_T)))(V)$:
	\begin{equation}\label{basis transformation}
	\begin{bmatrix}
		e_{1} & e_{2}
	\end{bmatrix} = 
	\begin{bmatrix}
		b_{P_T} & b_{\sigma_T(P_T)}
	\end{bmatrix} \cdot 
	\begin{bmatrix}
		\frac{x-x_{P_T}}{y+y_{P_T}} & \frac{1}{2 y_{P_T}} \\
		\frac{x-x_{P_T}}{y-y_{P_T}} & \frac{1}{2 y_{P_T}}
	\end{bmatrix}(:=A).
	\end{equation}
	Set $\check{b}_{P_T}=b_{P_T}^{-1}$ \etc, and repeat the above argument for the sequence \eqref{target bundle}. We obtain a local lifting $f_2$ of $1_U\in \sO_{C_T}(U)$, and the following transformation matrix of local bases for $(\sO_{C_T}(-P_T)\oplus \sO_{C_T}(-\sigma_T(P_T)))(V)$:
	\begin{equation}\label{second basis transformation}
	\begin{bmatrix}
		f_{1} & f_{2}
	\end{bmatrix} = 
	\begin{bmatrix}
		\check{b}_{P_T} & \check{b}_{\sigma_T(P_T)}
	\end{bmatrix} \cdot 
	\begin{bmatrix}
			(y+y_{P_T}) & \frac{y^{2}-y^{2}_{P_T}}{-2y_{P_T}(x-x_{P_T})} \\
			(y-y_{P_T}) & \frac{y^{2}-y^{2}_{P_T}}{-2y_{P_T}(x-x_{P_T})}
		\end{bmatrix}(:=B).
	\end{equation}
	The representation matrix of $\eta[G]^{1,0}$ with respect to the local bases $\{b_{P_T},\check{b}_{P_T}\}$  (resp. $\{b_{\sigma_T(P_T)},\check{b}_{\sigma_T(P_T)}\}$) of $(\sO_{C_T}(P_T)\oplus \sO_{C_T}(\sigma_T(P_T)))$ (resp. $(\sO_{C_T}(-P_T)\oplus \sO_{C_T}(-\sigma_T(P_T)))$) is by definition given by a matrix of the following form
	\begin{equation*}
		\Theta_{\textrm{def}}=
		\begin{bmatrix}
			(y+y_{P_T})^{2} & 0 \\
			0 & (y-y_{P_T})^{2}
		\end{bmatrix} \otimes u\cdot d\log x',
	\end{equation*}
	where $u$ is some local unit. 
	Now we write
	\begin{equation*}
		\eta[G]|_{U} \begin{bmatrix}
			e_{1} & e_{2}
		\end{bmatrix}=\begin{bmatrix}
			f_{1} & f_{2}
		\end{bmatrix} \cdot \Theta.
	\end{equation*}
	The above calculations lead to $\Theta=B^{-1}\cdot\Theta_{\textrm{def}}\cdot A$, which takes the following form:
	\begin{equation*}
		\Theta=
		\begin{bmatrix}
			(x-x_{P_T}) & \frac{y}{y_{P_T}} \\
			0 & (x-x_{P_T})
		\end{bmatrix}  \otimes u\cdot \dif \log x',
	\end{equation*}
	Over the other open subset $U'$, the representation matrix $\Theta'$ of $\eta[G]|_{U'}$ with respect to analogous bases is of form  
	\begin{equation*}
		\Theta'=
		\begin{bmatrix}
			(x'-x'_{P_T}) & \frac{y'}{y'_{P_T}} \\
			0 & (x'-x'_{P_T})
		\end{bmatrix}   \otimes u'\cdot d\log x,
	\end{equation*}
	where $x'_{P_T}=\frac{x_{P_T}-\lambda}{x_{P_T}-1}$, $y'=\frac{(\lambda-1)y}{(x-1)^{2}}$ and $u'$ is a unit. 
	Now the statement on the Higgs structure becomes more than obvious.
	\end{proof}
	\begin{remark}
	The local units $u,u'$ can be also determined (up to units of $R$), using the compatibility of the two representation matrices. In fact, 
	\begin{equation*}
			u=(x(x_{P_T}-1))^{-1}, \quad u'=-(x'(x'_{P_T}-1))^{-1}.
			\end{equation*}
	They are useful in an explicit calculation of the inverse Cartier transform of $(\sF,\eta)[G]$ in positive characteristic. 		 
	\end{remark}
	Let $\pi_T: C_T\to \P^1_T$ be the natural projection. 
	Set $(\sG,\Theta)=\pi_{T\Par*}(\sF,\eta)[G]$, the parabolic pushforward of $(\sF,\eta)[G]$. 
	It is a $T$-flat family of rank four polystable parabolic Higgs bundles over $(\P^1,D)$ of degree zero. 
	Ignoring its parabolic structure, we obtain a $T$-flat family of rank four logarithmic Higgs bundles over $(\P^1,D)$ viz. $(\sG_0,\Theta)$.  
	\begin{corollary}\label{max destabilizer}
	Notation as above. 
	Then the relative Harder-Narasimhan filtration for $(\sG_0,\Theta)$ exists. 
	Let $(\sE,\Theta)$ be its maximal destabilizer. Then it represents an element in $\sM_{\Higgs}(T)$.
	\end{corollary}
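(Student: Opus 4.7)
The plan is to produce $(\sE, \Theta)$ as the underlying (non-parabolic) logarithmic Higgs bundle of the parabolic pushforward $\pi_{T\Par*}(\sS, \eta_{\textrm{sub}})$, identify it fiberwise as a stable rank two logarithmic Higgs bundle of degree $-1$, and then invoke Nitsure's theorem on relative Harder-Narasimhan filtrations.

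First, apply $\pi_{T\Par*}$ to the short exact sequence \eqref{s.e.s. of E[G]} of Proposition \ref{ses of E[G]}. Since the BIS correspondence is an exact equivalence that behaves well in the relative setting, this yields a short exact sequence
\begin{equation*}
0 \to \pi_{T\Par*}(\sS, \eta_{\textrm{sub}}) \to (\sG, \Theta) \to \pi_{T\Par*}(\sQ, \eta_{\textrm{quo}}) \to 0
\end{equation*}
of parabolic Higgs bundles on $(\P^1_T, D_T)$. Let $(\sE, \Theta)$ denote the underlying logarithmic Higgs bundle of $\pi_{T\Par*}(\sS, \eta_{\textrm{sub}})$; by construction it is a $T$-flat, rank two sub-Higgs-bundle of $(\sG_0, \Theta)$.

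Next, I would compute the fibre at $t \in T$. By the last clause of Proposition \ref{ses of E[G]}, $(\sS, \eta_{\textrm{sub}})|_{C_t} \cong (\sO_C \oplus \sO_C(-P - \sigma(P)), \eta_{P + \sigma(P)})$ for a unique $P = P(t)$. Tracing through the sequences \eqref{source bundle} and \eqref{target bundle}, both summands of $\sS|_{C_t}$ inherit the \emph{trivial} $G$-action, so at the fibre the parabolic pushforward reduces to the ordinary pushforward and yields the graded logarithmic Higgs bundle $(\sO_{\P^1} \oplus \sO_{\P^1}(-\pi(P)), \theta_{\pi(P)})$ on $(\P^1, D)$, of rank two and degree $-1$ --- precisely the stable type discussed in the introduction. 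An entirely parallel computation, using that $(\sQ, \eta_{\textrm{quo}}) \cong (\sS, \eta_{\textrm{sub}}) \otimes (\sO_{C_T}(P_T + \sigma_T(P_T)), 0)$ with the sign $G$-action on the twist factor, shows that $\pi_{T\Par*}(\sQ)|_{\P^1_t}$ is a stable graded logarithmic Higgs bundle of rank two and degree $-3$. (Minor modifications suffice when $\pi(P) \in D$, but the numerical type is unchanged.)

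Assembling these fiberwise data, at every $t \in T$ the exact sequence $0 \to \sE \to \sG_0 \to \sG_0/\sE \to 0$ restricts to the Harder-Narasimhan filtration of $(\sG_0, \Theta)|_{\P^1_t}$, with successive quotients stable of slopes $-1/2$ and $-3/2$. Since the HN type is thus constant on $T$, Nitsure's theorem \cite[Theorem 5]{N11} supplies the relative Harder-Narasimhan filtration; by uniqueness of the maximal destabilizer it must coincide with $\sE$. Therefore $(\sE, \Theta)$ is a $T$-flat family of rank two, degree $-1$ stable logarithmic Higgs bundles on $(\P^1, D)$, \ie an element of $\sM_{\Higgs}(T)$. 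I expect the main technical burden to be the bookkeeping of the $G$-equivariant structures in the fiberwise parabolic pushforward, since flipping the sign character interchanges the roles of sub and quotient; once this is settled, the invocation of Nitsure's theorem is routine.
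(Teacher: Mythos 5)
Your proposal is correct and follows essentially the same route as the paper: push forward the short exact sequence from Proposition \ref{ses of E[G]}, observe that $\sS$ carries the trivial $G$-structure so $\pi_{T\Par*}(\sS,\eta_{\textrm{sub}})$ has trivial parabolic structure and fiberwise equals the degree $-1$ stable Higgs bundle $(\sO_{\P^1}\oplus\sO_{\P^1}(-1),\theta_{\pi(P)})$, identify the quotient as a degree $-3$ stable Higgs bundle via the sign-twisted pushforward, and then invoke constancy of the HN polygon together with Nitsure's theorem. This is exactly the paper's argument, including the identification of $\sE$ as the underlying bundle of $\pi_{T\Par*}(\sS,\eta_{\textrm{sub}})$.
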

	\begin{proof}
	The functor $\pi_{T\Par*}$ is exact. 
	So we obtain from \eqref{s.e.s. of E[G]} the following exact sequence of parabolic Higgs bundles over $(\Pl_T,D_{T})/T$:
	\begin{equation*} \label{ses over P1}
		0 \to \pi_{T\Par*}(\sS,\eta_{\textrm{sub}}) \to  (\sG,\Theta) \to \pi_{T\Par*}(\sQ,\eta_{\textrm{quo}}) \to 0.
	\end{equation*}
	We are going to show that $(\sE,\Theta)=\pi_{T\Par*}(\sS,\eta_{\textrm{sub}})$ as logarithmic Higgs bundle over $(\P^1_T,D_T)/T$, and it represents an element in $\sM_{\Higgs}(T)$. 
	
	First note that $\sS$ is a direct sum of two $G$-equivariant line bundles with trivial $G$-structure. 
	Let $q_T: \P^1_T\to T$ be the natural projection. 
	Then 
	\begin{equation*}
	\pi_{T\Par*}(p_T^*L)=\pi_{T\Par*}(\pi_T^*q_T^*L)=\pi_{T\Par*}(\pi_{T\Par}^*q_T^*L)=q_T^*L.
	\end{equation*}
	It follows that
	\begin{equation*}
	\pi_{T\Par*}(\sS)=\pi_{T\Par*}(\sO_{C_T})\oplus \pi_{T\Par*}(\sO_{C_T}(-P_T-\sigma_T(P_T))\otimes q_T^*L,
	\end{equation*}
	which has the trivial parabolic structure. Let $t$ be a closed poin of $T$. By Proposition \ref{ses of E[G]}, it follows that
	\begin{equation*}
	\pi_{T\Par*}(\sS,\eta_{\textrm{sub}})|_{\P^1_t}\cong \pi_{\Par*}((\sS,\eta_{\textrm{sub}})|_{C_t})\cong \pi_{\Par*}(\sO_{C}\oplus \sO_C(-P-\sigma(P)),\eta_{P+\sigma(P)}),
	\end{equation*}
	for some unique closed point $P\in C(k)$. 
	Let $Q=\pi(P)\in \P^1(k)$. 
	As
	\begin{equation*}
	\pi_{\Par}^*\sO_{\P^1}(-Q)=\pi^*\sO_{\P^1}(-Q)=\sO_C(-P-\sigma(P))
	\end{equation*}
	with the trivial $G$-structure, 
	\begin{equation*}
	\pi_{\Par*}(\sO_{C}\oplus \sO_C(-P-\sigma(P))=\sO_{\P^1}\oplus \sO_{\P^1}(-Q)\cong \sO_{\P^1}\oplus \sO_{\P^1}(-1).
	\end{equation*}
	Moreover, the Higgs field $\pi_{\Par*}\eta_{P+\sigma(P)}$ is easily seen to have a unique simple zero at $Q$. 
	Therefore, $\pi_{T\Par*}(\sS,\eta_{\textrm{sub}})$ represents an element in $\sM_{\Higgs}(T)$. 
	
	On the other hand, by Proposition \ref{ses of E[G]} again, we compute that
	\begin{equation*}
	\begin{array}{rcl}
		\pi_{T\Par*}(\sQ,\eta_{\textrm{quo}})|_{\P^1_t}&\cong &\pi_{T\Par*}((\sS,\eta_{\textrm{sub}})\otimes (\mathcal{O}_{C_T}(P_T+\sigma_T (P_T)),0))|_{\P^1_t}\\
		&\cong&(\sO_{\P^1}\oplus \sO_{\P^1}(-1), \theta_Q)\otimes  (\pi_{\Par*}\mathcal{O}_{C}(P+\sigma (P)),0)\\
		&\cong&(\sO_{\P^1}\oplus \sO_{\P^1}(-1),\theta_Q)\otimes (\pi_{\Par*}(\sO_{C}\otimes \pi_{\Par}^*\sO_{\P^1}(Q)),0)\\
    &\cong &(\sO_{\P^1}\oplus \sO_{\P^1}(-1),\theta_Q)\otimes (\sO_{\P^1}(Q)\otimes \pi_{\Par*}\sO_{C},0) \\
		&\cong &((\sO_{\P^1}(-1)\oplus \sO_{\P^1}(-2))(\frac{1}{2}D),\theta_Q).
	\end{array}
	\end{equation*}
	Therefore, we have the following exact sequence of logarithmic Higgs bundles over $(\P^1,D)/k$:
	\begin{equation*}
		0 \to (\OO_{\Pl} \oplus \OO_{\Pl}(-1),\theta_Q) \to (\sG_0,\Theta)|_{\P^1_t} \to (\OO_{\Pl}\oplus \OO_{\Pl}(-1),\theta_{Q})\otimes (\sO_{\P^1}(-1),0)\to 0.
	\end{equation*}
	Clearly, the HN polygon of the $T$-family $(\sG_0,\Theta)$ keeps constant. 
	So it admits a unique relative HN filtration, which is nothing but $\pi_{T\Par*}(\sS,\eta_{\textrm{sub}})\subset (\sG_0,\Theta)$.
	\end{proof}
	For $T\in (\mathrm{Sch}/k)$, we have a well-defined map of sets
	\begin{equation*}
	\Pi_{\Higgs}(T): \varSigma_{\Higgs}(T)\to \sM_{\Higgs}(T),
	\end{equation*}
	which sends the equivalence class of $(\sF,\eta)$ to the equivalence class of the maximal destabilizer of $(\pi_{T\Par*}(\sF,\eta)[G])_0$. 
	For a morphism of $k$-schemes $f: T\to T'$, we leave the reader to verify the commutativity of the following diagram:
	\begin{equation*}
	\xymatrix{ \varSigma_{\Higgs}(T)\ar[r]^-{\Pi_{\Higgs}(T)}\ar[d]_{\varSigma_{\Higgs}(f)}&\sM_{\Higgs}(T)\ar[d]^{\sM_{\Higgs}(f)}\\
					\varSigma_{\Higgs}(T')\ar[r]^-{\Pi_{\Higgs}(T')}&\sM_{\Higgs}(T').}
	\end{equation*}	
	Summarizing the above discussions, we finally obtain the main result of the section.	 
	\begin{proposition}\label{moduli interpretation of pi}
	Notation as above. 
	The natural transformation $\Pi_{\Higgs}$ from the moduli functor $\varSigma_{\Higgs}$ to the moduli functor $\sM_{\Higgs}$ induces a $k$-morphism 
	\begin{equation*}
	\pi_{\Higgs}: \Sigma_{\Higgs}\to M_{\Higgs}.
	\end{equation*} 
	Under the natural identifications $\Sigma_{\Higgs}\cong C$ and $M_{\Higgs}\cong \P^1$, $\pi_{\Higgs}$ coincides with $\pi$.
	\end{proposition}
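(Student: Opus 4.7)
The plan is to exploit corepresentability: $M_{\Higgs}$ universally corepresents $\sM_{\Higgs}$ and, by Lemma \ref{Description of Sigma}, $\Sigma_{\Higgs}$ corepresents $\varSigma_{\Higgs}$. So once I have checked that the fibrewise recipe $\Pi_{\Higgs}(T)$ supplied by Corollary \ref{max destabilizer} assembles into a genuine natural transformation $\varSigma_{\Higgs}\to \sM_{\Higgs}$, composing it with the moduli morphism $\sM_{\Higgs}\to \underline{M_{\Higgs}}$ and invoking the universal property of the corepresentation by $\Sigma_{\Higgs}$ will produce a unique $k$-morphism $\pi_{\Higgs}\colon \Sigma_{\Higgs}\to M_{\Higgs}$. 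No further construction is needed after that.

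The naturality check decomposes into base-change compatibility of the four operations comprising $\Pi_{\Higgs}$: (i) the doubling $(\sF,\eta)\mapsto (\sF,\eta)[G]$, which commutes with pullback along any $f\colon T\to T'$ since $\sigma_T=\sigma\times \id_T$; (ii) the parabolic pushforward along $\pi_T$, which is compatible with base change because $\pi$ is finite flat and the BIS correspondence works in the relative setting, as noted in \S 2; (iii) restricting to the underlying logarithmic Higgs bundle, trivially; and (iv) extracting the maximal destabilizer of the resulting $T$-flat family. For (iv) I would invoke \cite[Theorem 5]{N11} together with the fact, established in the proof of Corollary \ref{max destabilizer}, that the HN polygon of $(\sG_0,\Theta)$ is fibrewise constant on any $T$-family coming from $\varSigma_{\Higgs}$, so the relative HN filtration exists and commutes with arbitrary base change. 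The equivariance under the equivalence $(\sF,\eta)\sim (\sF,\eta)\otimes (p_T^*L,0)$ is automatic, since twisting by $(p_T^*L,0)$ commutes with (i)--(iii) and merely twists the destabilizer in (iv) by $(q_T^*L,0)$.

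To identify $\pi_{\Higgs}$ with $\pi$ under the identifications $\Sigma_{\Higgs}\cong C$ and $M_{\Higgs}\cong \P^1$, both source and target are reduced and $k$ is algebraically closed, so it suffices to match values on $k$-points. A point $P\in C(k)$ corresponds to the class $[(\sO_C(P)\oplus \sO_C(-P),\eta_{2\sigma(P)})]\in \Sigma_{\Higgs}(k)$; specializing Proposition \ref{ses of E[G]} at a closed point yields $(\sS,\eta_{\textrm{sub}})\cong (\sO_C\oplus \sO_C(-P-\sigma(P)),\eta_{P+\sigma(P)})$, and the explicit parabolic pushforward computed in the proof of Corollary \ref{max destabilizer} then identifies $\pi_{\Par*}(\sS,\eta_{\textrm{sub}})$ with $(\sO_{\P^1}\oplus \sO_{\P^1}(-1),\theta_{\pi(P)})$, whose class in $M_{\Higgs}\cong \P^1$ is precisely $\pi(P)$. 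Hence $\pi_{\Higgs}(P)=\pi(P)$, and by reducedness the two $k$-morphisms coincide. I expect the only delicate step to be the verification of item (iv) in the naturality check, which genuinely relies on the constancy of the HN polygon along $\Sigma_{\Higgs}$; the rest is bookkeeping once Proposition \ref{ses of E[G]} and Corollary \ref{max destabilizer} are in hand.
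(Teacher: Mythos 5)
Your argument is correct and follows essentially the same route as the paper's: corepresentability of both functors yields the unique $k$-morphism $\pi_{\Higgs}$, and the computation in the proof of Corollary \ref{max destabilizer} identifies it with $\pi$ on $k$-points, from which coincidence of morphisms follows by reducedness. The only addition you make is to spell out the base-change compatibility of the four ingredients of $\Pi_{\Higgs}$ (doubling, parabolic pushforward, forgetting the parabolic structure, taking the relative HN destabilizer), which the paper explicitly leaves to the reader; your identification of step (iv), resting on the constancy of the HN polygon, as the only nontrivial point is accurate.
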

	\begin{proof}
	By the corepresentability of moduli functors, there exists a unique morphism $\pi_{\Higgs}:\Sigma_{\Higgs}\to M_{\Higgs}$ of $k$-schemes rendering the commutativity of the following diagram of natural transformations: 
	\begin{equation*}
	\xymatrix{ \varSigma_{\Higgs}\ar[r]^-{\Pi_{\Higgs}}\ar[d]_{\alpha}&\sM_{\Higgs}\ar[d]^{\alpha}\\
					\underline{\Sigma_{\Higgs}}\ar[r]^-{\underline{\pi_{\Higgs}}}&\underline{M_{\Higgs}}.}
	\end{equation*}
	(Underline means the Yoneda embedding). 
	Under the natural identifications, the proof of Corollary \ref{max destabilizer} shows that on the set of $k$-points, $\pi_{\Higgs}$ coincides with $\pi$. 
	Hence they coincide as morphism of $k$-schemes too.  
	\end{proof}

\section{Flow operator over elliptic curve}
In this section, we let $k$ be an algebraically closed field of odd characteristic. The aim of this section is to inverstigate the action of the flow operator $\Gr\circ C^{-1}$ on the moduli space $\Sigma_{\Higgs}$ over $k$. 

First fix some notations. A convenient way to choose a $W_2(k)$-lifting of the pair $(C,B)$ is using a bit deformation theory. Recall that we have fixed a $W_(k)$-lifting  $(\P^1_{W_2(k)},\tilde D)$ of $(\P^1_k,D)$. Equip $\P^1$ (resp. $C$) with the log structure defined by the divisor $D$ (resp. $B$). Then the covering map $\pi: (C,B)\to (\P^1,D)$ is in fact a log \'etale morphism over $k$ (where $k$ as well as $W_2(k)$ are equipped with the trivial log structure). Then by deformation theory there exists a unique log \'etale morphism $\tilde \pi: (\tilde C,\tilde B)\to (\P^1_{W_2(k)},\tilde D)$ over $W_2(k)$ lifting $\pi$. So we have a uniquely defined inverse Cartier transform $C^{-1}=C^{-1}_{(C,B)\subset (\tilde C,\tilde B)}$. As before, $\Gr$ refers to the grading with respect to the HN filtration. The flow operator discussed in this section is the usual one (without the presence of a parabolic structure). However, as we shall immediately see, the investigation into the case for $\Sigma_{\Higgs}$ is considerably more difficult than the one for $M_{\Higgs}$.  

Let $(F,\eta)$ represent an element in $\Sigma_{\Higgs}(k)\subset M^s_{\textrm{gr}}(k)$. Then $(V,\nabla)=C^{-1}(F,\eta)$ is $\nabla$-stable, and with trivial determinant. Therefore $\Gr(V,\nabla)$ represents an element in $M_{\textrm{gr}}(k)$. However, if the class of $\Gr(V,\nabla)$ would lie in the the component $M_0$, then it would never flow back to $\Sigma_{\Higgs}$! The aim of this section is to show that such phenomenon cannot happen. Actually, we shall prove that the flow operator induces an endomorphism on $\Sigma_{\Higgs}$. The method is to explore the functoriality of the construction in the flow operator. 

Let $\sM^s_{\theta,\nil}$ (resp. $\sM^{'s}_{\theta,\nil}$) be the moduli functor of rank two stable nilpotent Higgs bundles over $(C,B)/k$ (resp. $(C',B')$). Let $\sM^s_{\nabla,\nil}$ be the moduli functor of rank two stable connections with nilpotent $p$-curvature over $(C,B)/k$. 
\begin{lemma}\label{equivalence of OV}
Notation as above. Then there is an isomorphism of functors $$C_{OV}^{-1}: \sM^{'s}_{\theta,\nil}\to \sM^s_{\nabla,\nil}.$$ 
\end{lemma}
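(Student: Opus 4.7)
The plan is to promote the Ogus--Vologodsky equivalence of categories to an isomorphism of moduli functors, by checking that it preserves both the stability condition and the scalar ambiguity built into the moduli functors.

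First I would recall the content of \cite{OV}: $C_{OV}^{-1}$ is already an equivalence of categories between nilpotent Higgs modules of exponent $\leq p-1$ on $C'/k$ and flat modules with nilpotent $p$-curvature of exponent $\leq p-1$ on $C/k$, with quasi-inverse the direct Cartier transform $C_{OV}$. The construction is compatible with flat base change on the base, so it extends to a $T$-relative equivalence between the corresponding $T$-flat families for every $T \in (\mathrm{Sch}/k)$, just as the BIS correspondence admits a relative version (compare the paragraph preceding Lemma \ref{parabolic pullback commutes with flow operator}).

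Next I would verify that $C_{OV}^{-1}$ matches the two locally closed conditions cutting out the submoduli functors in the statement. Since $C_{OV}^{-1}$ is an equivalence, rank one Higgs sub-bundles of $(F',\theta)$ biject with rank one $\nabla$-invariant sub-bundles of $C_{OV}^{-1}(F',\theta)$. The map $F_k:C'\to C$ in the defining Cartesian diagram is an isomorphism of abstract schemes, so the degree of the underlying line bundles is preserved on each geometric fibre over $T$; slope stability therefore translates directly between the two sides. Moreover $C_{OV}^{-1}$ is compatible with tensor product and sends the trivial Higgs bundle $(\sO,0)$ to the trivial flat bundle $(\sO,d)$, so the trivial-determinant condition is preserved, and tensoring by $(p_T^*L,0)$ on the Higgs side corresponds to tensoring by $(p_T^*L,\nabla_{\mathrm{can}})$ on the connection side; hence the scalar-twist equivalence relation defining the two moduli functors is preserved by $C_{OV}^{-1}$.

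Combining these, $C_{OV}^{-1}$ sends any $T$-flat family representing a class in $\sM^{'s}_{\theta,\nil}(T)$ to a $T$-flat family representing a class in $\sM^s_{\nabla,\nil}(T)$, the assignment is natural in $T$, and $C_{OV}$ supplies the inverse natural transformation. The main technical obstacle I anticipate is the careful bookkeeping of the relative Ogus--Vologodsky equivalence together with its compatibility with the line-bundle ambiguity $(p_T^*L,0)\leftrightarrow(p_T^*L,\nabla_{\mathrm{can}})$ built into the moduli functors; once these are laid out, preservation of stability and the functorial bijection are formal.
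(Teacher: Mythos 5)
Your plan---to promote the Ogus--Vologodsky equivalence of categories to an isomorphism of moduli functors by checking that it matches the defining conditions on both sides---is the right overall idea, but the proposal passes over the genuine technical obstacle, and that is precisely where the paper's proof concentrates its effort.

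The assertion that the OV/Schepler equivalence ``is compatible with flat base change on the base, so it extends to a $T$-relative equivalence\ldots for every $T\in(\mathrm{Sch}/k)$, just as the BIS correspondence admits a relative version'' is not justified and is the gap. The relative inverse Cartier transform over a base $T$ requires as input a $W_2$-lifting of the log smooth family $(C_T,B_T)/T$; in particular it requires a flat $W_2$-lifting $\tilde T$ of $T$, and an arbitrary finite-type $k$-scheme need not admit one. This is fundamentally unlike the BIS correspondence, which is push/pull along a finite flat covering and needs no lifting data at all, so the analogy is misleading. If you instead want to argue that for the \emph{constant} family $C\times_kT/T$ the exponential-twisting construction works without lifting $T$, because the maps $\zeta_{\tilde F}=\tfrac{\dif\tilde F}{p}$ and the gluing cocycles $h_{ij}$ are $\sO_C$-linear data defined already over $k$ and can be pulled back to $C_T$, that is a plausible route---but it is a real argument that has to be carried out, not a formal base-change remark.

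The paper's proof sidesteps the issue entirely and for that reason needs more machinery than your sketch contains. It uses Langer's theorem to produce a moduli scheme over $W_2(k)$ corepresenting the stable nilpotent Higgs moduli functor on $(\tilde C',\tilde B')/W_2(k)$, whose special fiber is $M^{'s}_{\theta,\nil}$; this gives a canonical $W_2(k)$-lift $\widetilde{M^{'s}_{\theta,\nil}}$. Étale deformation theory then produces $W_2$-lifts $\tilde M_i$ of the members of an étale cover $\coprod_iM_i\to M^{'s}_{\theta,\nil}$ carrying local universal families, and over each $(\tilde C'_{\tilde M_i},\tilde B'_{\tilde M_i})/\tilde M_i$ the relative Cartier and inverse Cartier transforms of Ogus--Vologodsky and Schepler are defined. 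The natural transformation on a general $T$ is then obtained by pulling back $C^{-1}_{\sC_{\log}/\sM_i}(\sE_i^{\mathrm{univ}},\Theta_i^{\mathrm{univ}})$ along $T_i=T\times_{M'^s_{\theta,\nil}}M_i\to M_i$ and gluing, with the quasi-inverse coming from the relative Cartier transform applied the same way. Your bullet points on stability, trivial determinant, and scalar-twist equivalence are needed but comparatively formal; note also that your degree remark via ``$F_k:C'\to C$ is an iso of abstract schemes'' is slightly off: the underlying bundle of $C_{OV}^{-1}(\sE,\Theta)$ is the Frobenius pullback $F^*\sE$, so degrees of subobjects scale by $p$, and it is the uniform scaling of slopes by $p$ (not preservation of degree) that transports (semi)stability.
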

\begin{proof}
Let $M^{'s}_{\theta,\nil}$ be the $k$-scheme corepresenting $\sM^{'s}_{\theta,\nil}$. It has a distinguished $W_2(k)$-lifting $\widetilde{M^{'s}_{\theta,\nil}}$, the moduli scheme corepresenting the moduli functor of rank two stable nilpotent Higgs bundles over $(\tilde C',\tilde B')/W_2(k)$ (\cite[Theorem 1.1]{Lan14}). Let $\coprod_iM_i\to M^{'s}_{\theta,\nil}$ be an \'etale cover such that over each $M_i$, a local universal family $(\sE_i^{\textrm{univ}},\Theta_i^{\textrm{univ}})$ exists (\cite[Theorem 4.7]{Sim94}, \cite[Proposition 7.6, 8.2]{BR85}). By the \'etaleness, there is a distinguished $W_2(k)$-lifting $\tilde M_i$ of $M_i$, induced by the lifting $\widetilde{M^{'s}_{\theta,\nil}}$ of the whole moduli. Then the Frobenius pullback $(C_{M_i},B_{M_i})\times_{M_i,F_{M_i}}M_i$, which is nothing but $(C'_{M_i},B'_{M_i})$, admits a distinguished $W_2(k)$-lifting too, which is the pair
$$
(\tilde C'_{\tilde M_i}:=\tilde C'\times_{W_2(k)}\tilde M_i,\tilde B'_{\tilde M_i}:=\tilde B'\times_{W_2(k))}\tilde M_i).
$$
Set 
$$
(\sC_{\log}/\sM_i:=(C_{M_i},B_{M_i})/M_i,(\tilde C'_{\tilde M_i},\tilde B'_{\tilde M_i})/\tilde M_i).$$ 
Then the relative inverse Cartier transform $C^{-1}_{\sC_{\log}/\sM_i}$ and the relative Cartier transform $C_{\sC_{\log}/\sM_i}$ of Ogus-Vologodsky \cite{OV} and Schepler \cite{Sch08} make an equivalence of categories between the category of $M_i$-flat families of rank two (semi)stable nilpotent Higgs bundles over $(C',B')/k$ and the category of $M_i$-flat families of rank two (semi)stable connections over $(C,B)/k$ with nilpotent $p$-curvature. In particular, $C^{-1}_{\sC_{\log}/\sM_i}$ induces a natural bijection from $\sM^{'s}_{\theta,\nil}(M_i)$ to $\sM^{s}_{\nabla,\nil}(M_i)$. For an arbitrary $T\in (\textrm{Sch}/k)$, we use an descent argument: Let $(\sE,\Theta)\in \sM^{'s}_{\theta,\nil}(T)$. Set $T_i=T\times_{M^{'s}_{\theta,\nil}}M_i$ and $\phi_i: T_i\to M_i$ the induced morphism from the associated moduli map $\phi: T\to M^{'s}_{\theta,\nil}$ to $(\sE,\Theta)$. Then the local flat families $\{\phi_i^*C^{-1}_{\sC_{\log}/\sM_i}(\sE_i^{\textrm{univ}},\Theta_i^{\textrm{univ}})\}_i$ glue to a flat family over $T$. This gives rise to a natural map from to $\sM^{'s}_{\theta,\nil}(T)$ to $\sM^{s}_{\nabla,\nil}(T)$. To show it is a bijection, one repeats the previous argument for the relative Cartier transform, to construct its inverse. It is tedious but straightforward to verify the compatibility of the construction with pullbacks. So we obtain a natural transform $C_{OV}^{-1}: \sM^{'s}_{\theta,\nil}\to \sM^s_{\nabla,\nil}$, which is an isomorphism of moduli functors.    
\end{proof}
By abuse of notation, we denote again by $C_{OV}^{-1}: M^{'s}_{\theta,\nil}\to M^{s}_{\nabla,\nil}$ the induced $k$-isomorphism from the above lemma. For any $T\in (\textrm{Sch}/k)$, we consider the following Cartesian diagram:
$$
 \xymatrix{C'_T\ar[r]^-{F_T}\ar[d]_-{}&C_T\ar[d]^-{}\\
		T\ar[r]^-{F_T}&T,}
  $$
where $F_T: T\to T$ is the absolute Frobenius. For $[(\sE,\Theta)]\in \sM^s_{\theta,\nil}(T)$, $F_T^*(\sE,\Theta)$ represents an element in $\sM^{'s}_{\theta,\nil}(T)$. Clearly, it defines a natural transformation $F: \sM^s_{\theta,\nil}\to \sM^{'s}_{\theta,\nil}$. 
\begin{proposition}
Fix a $W_2(k)$-lifting $(\tilde C,\tilde B)$ of $(C,B)$ as before. Then the composite of natural transformations 
$$
C_{OV}^{-1}\circ F: \sM^s_{\theta,\nil}\to \sM^{s}_{\nabla,\nil}
$$
induces a purely inseparable $k$-morphism 
$$
C^{-1}: M^s_{\theta,\nil}\to M^{s}_{\nabla,\nil}.
$$ 
\end{proposition}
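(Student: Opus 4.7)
The plan is to obtain $C^{-1}$ from the universal property of the corepresenting moduli scheme, and then establish purely inseparability by identifying the morphism on moduli induced by Frobenius pullback with the relative Frobenius $F_{M^s_{\theta,\nil}/k}$.

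First, I would compose $C_{OV}^{-1}\circ F:\sM^s_{\theta,\nil}\to\sM^s_{\nabla,\nil}$ with the canonical natural transformation $\sM^s_{\nabla,\nil}\to\underline{M^s_{\nabla,\nil}}$ to obtain a natural transformation from $\sM^s_{\theta,\nil}$ to a representable functor. By corepresentability of $\sM^s_{\theta,\nil}$ by $M^s_{\theta,\nil}$, this factors uniquely through a $k$-morphism $C^{-1}:M^s_{\theta,\nil}\to M^s_{\nabla,\nil}$. This produces the $k$-morphism whose existence is asserted; it remains to verify pure inseparability.

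Since $C_{OV}^{-1}$ is already an isomorphism of moduli schemes by Lemma \ref{equivalence of OV}, pure inseparability of $C^{-1}$ reduces to pure inseparability of the morphism $F_{\textrm{mod}}:M^s_{\theta,\nil}\to M^{'s}_{\theta,\nil}$ induced by the natural transformation $F$. Functoriality of the moduli construction with respect to the base change $F_k:k\to k$ gives a canonical identification $M^{'s}_{\theta,\nil}\cong (M^s_{\theta,\nil})^{(p)}$; under this identification I expect $F_{\textrm{mod}}$ to coincide with the relative Frobenius $F_{M^s_{\theta,\nil}/k}$. To verify the coincidence I would work with a local universal family $(\sE_i^{\textrm{univ}},\Theta_i^{\textrm{univ}})$ over an \'etale cover $\coprod_iM_i\to M^s_{\theta,\nil}$ as constructed in the proof of Lemma \ref{equivalence of OV}: a Cartesian-square chase identifies $F_{M_i}^{*}(\sE_i^{\textrm{univ}},\Theta_i^{\textrm{univ}})$ over $(C'_{M_i},B'_{M_i})$ with the base change of the universal family along $F_{M_i/k}$, and the associated moduli map is therefore precisely $F_{M_i/k}$. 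These local pieces glue to $F_{M^s_{\theta,\nil}/k}$, which is purely inseparable because $k$ is perfect.

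The main technical obstacle is the identification of $F_{\textrm{mod}}$ with the relative Frobenius; the care required is in tracking the several Cartesian squares involved and distinguishing the absolute Frobenius of $T$, the relative Frobenius of $C_T$ over $T$, and the Frobenius twist of $M^s_{\theta,\nil}$ over $k$. Once this identification is in place, pure inseparability of $C^{-1}$ is immediate, being a composition of the isomorphism $C_{OV}^{-1}$ with the purely inseparable $F_{\textrm{mod}}$; as a consistency check, $C^{-1}$ is then bijective on geometric points, which is what one expects given that $C_{OV}^{-1}$ is a bijection on $k$-points and the absolute Frobenius on $k$ is a bijection.
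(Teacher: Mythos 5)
Your proposal follows essentially the same route as the paper: obtain the morphism from the universal property of the corepresenting scheme, identify the Frobenius-pullback natural transformation $F$ with the relative Frobenius $F_{M^s_{\theta,\nil}/k}$ under the canonical identification $M^{'s}_{\theta,\nil}\cong M^{s}_{\theta,\nil}\times_{k,F_k}k$, and conclude pure inseparability by composing with the $k$-isomorphism $C^{-1}_{OV}$. The additional verification via local universal families is a reasonable (and correct) way of making precise the paper's remark that "one verifies easily" this identification; the only minor slip is attributing pure inseparability of the relative Frobenius to perfectness of $k$ — it is purely inseparable over any base of positive characteristic.
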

\begin{proof}
Let $M^{s}_{\theta,\nil}$ be the moduli space corepresenting $\sM^{s}_{\theta,\nil}$. Note that $M^{'s}_{\theta,\nil}\cong M^{s}_{\theta,\nil}\times_{k,F_k}k$. Because the absolute Frobenius commutes with arbitrary pullback, one verifies easily that the representing morphism to $F$ is naturally identified with the relative Frobenius $F_{M^{s}_{\theta,\nil}/k}: M^{s}_{\theta,\nil}\to M^{'s}_{\theta,\nil}$. By Lemma \ref{equivalence of OV}, $C^{-1}_{OV}$ is an $k$-isomorphism. Hence the composite morphism $C^{-1}$ is purely inseparable. 
\end{proof}

Let $(R,\mathfrak{m})$ be a valuation ring containing $k$ which is isomorphic to its residue field. Let $\xi_1$ (resp. $\xi_0$) be the generic (resp. special) point of $T=\Spec(R)$. Let $\sV$ be a $T$-flat family of rank two vector bundles with trivial determinant over $C$.  
	\begin{lemma}\label{HN is constant}
		Notation as above. The Harder-Narasimhan polygon of $\sV_{\xi_0}$ equals that of $\sV_{\xi_1}$.    
	\end{lemma}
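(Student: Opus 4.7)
The plan is to combine upper semicontinuity of the Harder--Narasimhan polygon with a rigidity argument specific to rank-two trivial-determinant bundles on the elliptic curve $C$. Since $\sV_t$ has rank two and trivial determinant, the HN polygon at a point $t$ is fully encoded in a single integer $d(\sV_t) \in \Z_{\geq 0}$, namely the degree of the maximal destabilizing line subbundle of $\sV_t$ ($d=0$ precisely when $\sV_t$ is semistable). So it suffices to prove $d(\sV_{\xi_0}) = d(\sV_{\xi_1})$.

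First, I would apply the standard theorem of Shatz on specialization in flat families of coherent sheaves: the HN polygon is upper semicontinuous on $T$, so as $\xi_0$ is a specialization of $\xi_1$, one obtains $d(\sV_{\xi_0}) \geq d(\sV_{\xi_1})$.

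For the reverse inequality, I would argue by contradiction. Suppose $d_0 := d(\sV_{\xi_0}) > d_1 := d(\sV_{\xi_1}) \geq 0$. Then $\sV_{\xi_0}$ is unstable, and by Atiyah's classification of rank-two bundles with trivial determinant on an elliptic curve it splits as $L_0 \oplus L_0^{-1}$ with $\deg L_0 = d_0 \geq 1$, where $L_0$ is the unique maximal destabilizer. I would extend $L_0$ to the flat family $\sL_0 := p_C^* L_0$ on $C_T$ and then study the coherent $R$-module $M := (p_T)_* \sH om_{\sO_{C_T}}(\sL_0, \sV)$. By cohomology and base change, $M \otimes_R k$ contains the distinguished section corresponding to the inclusion $L_0 \hookrightarrow \sV_{\xi_0}$ and is therefore non-zero; if $M$ were torsion-free over $R$, then $M \otimes_R K$ would likewise be non-zero, yielding a non-zero morphism $L_0 \to \sV_{\xi_1}$ whose saturation is a line subbundle of $\sV_{\xi_1}$ of degree at least $d_0$, contradicting $d_1 < d_0$.

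The main obstacle is establishing this torsion-freeness, since a priori the distinguished section at the closed fibre could be annihilated by the uniformizer and leave no trace on the generic fibre. To circumvent this, I would invoke the valuative criterion of properness applied to the relative Quot-scheme $\mathrm{Quot}^{-d_0}_{\sV/C_T/T}$ parameterizing line-bundle quotients of $\sV$ of degree $-d_0$: its fibre over $\xi_0$ is non-empty (containing $\sV_{\xi_0} \twoheadrightarrow L_0^{-1}$), and combining this with the severe rigidity of sub-line-bundles of the split bundle $L_0 \oplus L_0^{-1}$, whose degrees can only be $d_0$, $-d_0$, or $\leq -d_0$, should force the image of this Quot-scheme in $T$ to reach $\xi_1$ as well. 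This is the step where the specific geometry of rank-two bundles on the elliptic curve $C$ together with the trivial-determinant hypothesis enters crucially, and it is what I expect to require the most care to make airtight.
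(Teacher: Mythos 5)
Your reduction to the single numerical invariant $d(\sV_t)$ (the degree of the maximal destabilizing line subbundle) and your application of Shatz's upper semicontinuity to obtain $d(\sV_{\xi_0})\geq d(\sV_{\xi_1})$ are both correct, and you correctly diagnose the gap in the $\Hom$-module argument: the base-change map $M\otimes_R k\to H^0(C,L_0^{-1}\otimes\sV_{\xi_0})$ is injective but not in general surjective, so the non-zero section over the closed fibre need not lift; this is exactly the jumping of $h^0$ under specialization and cannot simply be waved away.

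The proposed repair via the Quot-scheme is, however, a wrong approach rather than one that merely requires more care. Properness of $\mathrm{Quot}^{-d_0}_{\sV/C_T/T}\to T$ gives \emph{specialization}: a $K$-point of the Quot-scheme extends, by the valuative criterion, to an $R$-point and hence produces a point over $\xi_0$. It does \emph{not} give generization: non-emptiness of the fibre over $\xi_0$ only says that $\xi_0$ lies in the closed image of the Quot-scheme in $T$, and asserts nothing about $\xi_1$. No rigidity of subbundles of $L_0\oplus L_0^{-1}$ can repair a directional obstruction of this kind. The paper's argument runs in the opposite (correct) direction: it takes a canonical rank-one quotient of $\sV_{\bar\xi_1}$ (of degree $-d_1$ coming from the HN filtration, or a degree-zero quotient in the semistable case), extends it by properness of Quot to a $T$-flat quotient of $\sV$, and restricts to $\xi_0$ to obtain a rank-one subsheaf $L_{\xi_0}\subset\sV_{\xi_0}$ of degree $d_1\geq 0$. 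It then observes that the composite $L_{\xi_0}\to\sV_{\xi_0}\to\sV_{\xi_0}/M_{\xi_0}$ vanishes for degree reasons, so $L_{\xi_0}\subset M_{\xi_0}$, and the two line subbundles are identified. In short, the paper specializes the generic destabilizer and recognizes it in the special fibre; you attempt to generize the special destabilizer, which properness cannot deliver.
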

	\begin{proof}
	Let $K$ be the fractional field of $R$.	Let $\bar \xi_1$ be the geometric generic point of $T$. If $\sV_{\bar \xi_1}$ is semistable, then we may write 
		\begin{equation*}
			0\to \sO_{C_{\bar K}}\otimes L\to \sV_{\bar \xi_1} \to \sO_{C_{\bar K}}\otimes L\to 0,
		\end{equation*}
  where $L$ is torsion of order two. By the properness of Quot-scheme, $\sV_{\xi_0}$ has a locally free quotient of degree zero, and hence is semistable. If $\sV_{\bar \xi_1}$ is unstable, then we may write
		\begin{equation*}
			0\to L_{\bar \xi_1}\to \sV_{\bar \xi_1} \to L_{\bar \xi_1}^{-1}\to 0,
		\end{equation*}
		with $\deg(L_{\bar \xi_1})>0$. Again by the properness of Quot-scheme, it follows that $\sV_{\xi_0}$ contains a sub line bundle $L_{\xi_0}$ of degree equal to $\deg(L_{\bar \xi_1})$. Hence $\sV_{\xi_0}$ is unstable. 
		Let $M_{\xi_0}$ be the maximal destabilizer of $\sV_{\xi_0}$. For the degree reason, the composite 
		\begin{equation*}
			L_{\xi_0}\to \sV_{\xi_0}\to \sV_{\xi_0}/M_{\xi_0}
		\end{equation*}
		has to be zero. Therefore $L_{\xi_0}\subset M_{\xi_0}$. Since both are sub line bundles, it follows that $L_{\xi_0}=M_{\xi_0}$. Summarizing these two cases, one concludes the lemma.
	\end{proof}

Let $\sM^s_{\nabla,\det=\sO}$ be the moduli functor of rank two stable connections over $(C,B)/k$ with trivial determinant. Let $\sM_{\textrm{gr}}$ (resp. $\sM^s_{\textrm{gr}}$) be the moduli functor of rank two semistable (resp. stable) graded Higgs bundles over $(C,B)/k$ with trivial determinant. We construct a natural transformation 
$$
\Gr: \sM^s_{\nabla,\det=\sO}\to \sM_{\textrm{gr}}
$$
as follows: Let $M^s_{\nabla,\det=\sO}$ be the $k$-scheme corepresenting $\sM^s_{\nabla,\det=\sO}$. It is nonsingular. Let $[(V,\nabla)]\in M^s_{\nabla,\det=\sO}(k)$. Ignoring the connection, the rank two bundle $V$ possibly loses the stability. As local universal families for the functor $\sM^s_{\nabla,\det=\sO}$ exist, the schematic Harder-Narasimhan stratification (ignoring connections) on $M^s_{\nabla,\det=\sO}$ exists (\cite{N11}). By Lemma \ref{HN is constant} and \cite[Corollary 6]{N11}, each connected component $M^s_{\nabla,\det=\sO}$ has only one HN strata. Therefore, for any $T\in (\textrm{Sch}/k)$ and any $(\sV,\nabla)$ representing an element in $\sM^s_{\nabla,\det=\sO}(T)$, the relative HN filtration on $\sV$ exists. Taking the grading of $(\sV,\nabla)$ with respect to the relative HN filtration, we obtain $\Gr(\sV,\nabla)$ which represents an element in $\sM_{\textrm{gr}}(T)$. The compatibility of this construction with pullback follows from the uniquenss of the relative HN filtration. By abuse of notation, we denote again by $\Gr: M^s_{\nabla,\det=\sO}\to M_{\textrm{gr}}$ the induced $k$-morphism on moduli spaces from the construction. Assembling the above results, we obtain the following result.

\begin{proposition}\label{morphism for M}
Notation as above. Let $\phi: M^s_{\textrm{gr}}\to M_{\textrm{gr}}$ be the $k$-morphism representing the composite of the following natural transformations
$$
\sM^s_{\textrm{gr}}\to \sM^s_{\theta,\nil,\det=\sO}\stackrel{C^{-1}}{\longrightarrow} \sM^{s}_{\nabla,\nil,\det=\sO}\to \sM^{s}_{\nabla,\det=\sO}\stackrel{\Gr}{\longrightarrow} \sM_{\textrm{gr}},
$$
where $\sM^s_{\theta,\nil,\det=\sO}$ (resp. $\sM^{s}_{\nabla,\nil,\det=\sO}$) is the submoduli functor of $\sM^s_{\theta,\nil}$ (resp. $\sM^{s}_{\nabla,\nil}$) containing objects with trivial determinant. Then $\phi$ is inseparable, and for $[(F,\eta)]\in M^s_{\textrm{gr}}(k)$, it holds that
$$
\phi([F,\eta])=[\Gr\circ C^{-1}(F,\eta)].
$$
\end{proposition}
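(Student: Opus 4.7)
The plan is to patch together the four natural transformations appearing in the composite, then invoke universal corepresentability to extract the morphism $\phi$ on moduli schemes. The first arrow $\sM^s_{\textrm{gr}} \to \sM^s_{\theta,\nil,\det=\sO}$ is purely forgetful: a rank two graded Higgs bundle has Higgs field of nilpotent exponent $\leq 2$, and the trivial-determinant condition is evidently preserved. The second arrow is the restriction to subfunctors of trivial determinant of the natural transformation $C^{-1} = C^{-1}_{OV}\circ F$ constructed in the paragraph immediately preceding Lemma \ref{HN is constant}; preservation of $\det = \sO$ follows from the compatibility of $C^{-1}_{OV}$ with tensor products (hence with determinants) \cite{OV}. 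The third arrow is the obvious forgetful inclusion of a submoduli functor.

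The crucial step is to promote $\Gr$ to a genuine natural transformation $\sM^{s}_{\nabla,\det=\sO} \to \sM_{\textrm{gr}}$, rather than merely a pointwise set map. For $T \in (\textrm{Sch}/k)$ and $(\sV,\nabla) \in \sM^{s}_{\nabla,\det=\sO}(T)$, Lemma \ref{HN is constant} ensures that the Harder--Narasimhan polygon of $\sV$ is constant along every DVR-test curve in $T$; combined with the existence of local universal families for $\sM^{s}_{\nabla,\det=\sO}$ and \cite[Corollary 6]{N11}, each connected component of the moduli scheme $M^{s}_{\nabla,\det=\sO}$ is contained in a single HN stratum. The relative HN filtration of $\sV$ therefore exists globally over $T$, and its associated graded, with the induced quotient Higgs field, furnishes an element of $\sM_{\textrm{gr}}(T)$. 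Compatibility with base change is automatic from the uniqueness of the relative HN filtration.

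With all four natural transformations in hand, their composite is a natural transformation from $\sM^s_{\textrm{gr}}$ to $\sM_{\textrm{gr}}$. By universal corepresentability of source and target moduli functors, this composite is represented by a unique $k$-morphism $\phi: M^s_{\textrm{gr}} \to M_{\textrm{gr}}$, and unwinding the construction gives the claimed formula $\phi([(F,\eta)]) = [\Gr \circ C^{-1}(F,\eta)]$ on $k$-points. Inseparability of $\phi$ follows because the composite factors through the purely inseparable moduli morphism $C^{-1}: M^s_{\theta,\nil,\det=\sO} \to M^{s}_{\nabla,\nil,\det=\sO}$ established in the proposition immediately preceding Lemma \ref{HN is constant}. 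I expect the main obstacle to be the bookkeeping required to verify that $\Gr$ is genuinely functorial in $T$, which is exactly the role played by Lemma \ref{HN is constant} together with the local-universal-family argument; all other ingredients are formal consequences of corepresentability and of constructions already assembled in the preceding section.
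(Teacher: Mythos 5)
Your proposal is correct and follows essentially the same route as the paper. The paper does not display a separate proof environment for this proposition but instead assembles it from the immediately preceding paragraphs: the purely inseparable morphism $C^{-1} = C^{-1}_{OV}\circ F$ (constructed before Lemma \ref{HN is constant}), Lemma \ref{HN is constant}, and the paragraph using \cite[Corollary 6]{N11} to get a single HN stratum per connected component of $M^s_{\nabla,\det=\sO}$, which makes $\Gr$ a genuine natural transformation. Your reconstruction — forgetful first and third arrows, $C^{-1}$ inducing inseparability, $\Gr$ promoted to a natural transformation via constancy of HN polygons plus schematic HN stratification, and corepresentability to extract $\phi$ — matches this verbatim.
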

There is a reasonable way to extend $\phi$ to $M_{\textrm{gr}}$: Suppose a $T$-flat family belonging to $\sM_{\textrm{gr}}(T)$ with $T$ connected has one geometric fiber lying in $M^s_{\textrm{gr}}$. Then the associated moduli map $T\to M_{\textrm{gr}}$ factors through $M^s_{\textrm{gr}}\hookrightarrow M_{\textrm{gr}}$. We shall regard the disjoint component $M_0=M_{\textrm{gr}}-M^s_{\textrm{gr}}$, containing the $S$-equivalence classes of strictly semistable objects, as the parameter space for lower rank objects. 
\begin{definition}\label{flow morphism on the whole moduli}
Notation as above. Define the flow morphism $\phi: M_{\textrm{gr}}\to M_{\textrm{gr}}$ by requiring $\phi|_{M^s_{\textrm{gr}}}$ be the morphism constructed in Proposition \ref{morphism for M}, and $\phi|_{M_0}=\Pic^0(C)\to \Pic^0(C)=M_0$ be the multiplication by $p$.  
\end{definition}
It is vivid to call $M_0$ the \emph{black hole} of the moduli $M_{\textrm{gr}}$: Once a point flows into $M_0$, it will never leave $M_0$. This is clear by the definition of $\phi$, which is a manifestation of the black-hole principle: Strictly semistable objects flow only to strictly semistable objects. One may wonder if $\phi$ maps $M^s_{\textrm{gr}}$ into $M^s_{\textrm{gr}}$. However, this is not the case: In fact, one can show that $\phi$ maps $M_2$ either to itself or to $M_0$ (these logarithmic Higgs bundles are essentially the uniformizing Higgs bundle of $\P^1$ removing four points. See \cite[Proposition 3.1]{LS}). 
\begin{proposition}\label{flow preserves M_1}		
The flow morphism $\phi$ preserves $M_1$. Consequently, an element in $\Sigma_{Higgs}(k)$ will never flow into the strictly semistable locus.   		
\end{proposition}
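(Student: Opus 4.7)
The plan is to combine the morphism property of $\phi$ on the stable locus (Proposition \ref{morphism for M}) with the functoriality of the flow under parabolic pullback (Lemma \ref{parabolic pullback commutes with flow operator}), reducing the assertion to a single explicit computation on a conveniently chosen element of $\Sigma_{\Higgs}$.

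The first step I would take is to verify that $M_1$ is a connected component of $M_{\textrm{gr}}$. It is open in the stable locus $M^s_{\textrm{gr}} = M_1 \sqcup M_2$, which is itself open in $M_{\textrm{gr}}$, because inside $M^s_{\textrm{gr}}$ the part $M_2$ is cut out by the upper-bound condition $\deg L = 2$ on the HN line subbundle and is closed. On the other hand, $\pi_1: M_1 \to \Pic^1(C) \cong C$ realizes $M_1$ as a $\P^1$-bundle over a proper connected curve, so $M_1$ is projective and connected, hence also closed in the separated moduli $M_{\textrm{gr}}$. Since $\phi|_{M_1}$ is a morphism, the image $\phi(M_1)$ is connected, and once a single point $[(F,\eta)] \in M_1$ with $\phi([(F,\eta)]) \in M_1$ is exhibited, the main claim follows.

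To produce such a point, I would pick a two-torsion $Q \in |B|$, so that $\sigma(Q) = Q$, and set $P = \pi(Q) \in |D|$. The unique element of $\Sigma_{\Higgs} \cap \pi_1^{-1}(Q)$ is $(\sO_C(Q) \oplus \sO_C(-Q), \eta_{2Q})$, and by Lemma \ref{four fibers} it coincides with $\pi_{\Par}^*(E_{\frac{1}{2}P}, \theta_P)$ (using $\pi^*P = 2Q$ since $P$ is a branch point). Because the parabolic weight $\frac{1}{2}$ at $P$ pulls back to $2 \cdot \frac{1}{2} \equiv 0 \pmod{1}$, the parabolic structure becomes trivial on $C$, so the non-parabolic flow $\phi$ of Proposition \ref{morphism for M} agrees on this object with the parabolic flow. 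Lemma \ref{parabolic pullback commutes with flow operator} together with Proposition \ref{identification of self-maps} (using $\langle p/2 \rangle = 1/2$ for odd $p$) then yields
\[
\phi\bigl([\pi_{\Par}^*(E_{\frac{1}{2}P}, \theta_P)]\bigr) = [\pi_{\Par}^*(E_{\frac{1}{2}P}, \theta_{P'})]
\]
for some $P' \in \P^1$, which by Lemma \ref{four fibers} lies again in $\pi_1^{-1}(Q) \subseteq M_1$. The second assertion follows immediately: since $\Sigma_{\Higgs} \subseteq M_1$ and $\phi(M_1) \subseteq M_1$, iterated flows never enter the strictly semistable locus $M_0$.

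The main subtlety to handle carefully is the identification of the non-parabolic flow $\phi$ (defined via the HN grading of the Ogus-Vologodsky transform of an ordinary log Higgs bundle) with the parabolic version to which Lemma \ref{parabolic pullback commutes with flow operator} literally applies; this reconciliation is clean precisely because the pullback object has trivial parabolic structure. Everything else is a connectedness argument combined with the fact that $\varphi_{\lambda,p}$ is a self-map of $M_{\frac{1}{2}P}$.
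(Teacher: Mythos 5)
Your argument is correct and essentially matches the paper's proof: both use the disjoint-component structure of $M_{\textrm{gr}}$ together with the irreducibility of $M_1$ to reduce to exhibiting a single point whose $\phi$-image lies in $M_1$, and both supply such a point via Lemma \ref{parabolic pullback commutes with flow operator} applied over a distinguished fiber of $\pi_1$ (using Lemma \ref{four fibers} to identify $\pi_1^{-1}(Q)$ with $M_{\frac{1}{2}P}$ via parabolic pullback). The only, and harmless, difference is that the paper shows $\phi$ preserves the entire fiber $\pi_1^{-1}(Q)$, whereas you check just the one intersection point with $\Sigma_{\Higgs}$; that simplification is perfectly adequate for the connectedness argument.
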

\begin{proof}
Since $M_1$ is irreducible, it follows that $\phi(M_1)$ must be irreducible. Recall that in Lemma \ref{four fibers}, we have constructed four distinguished fibers of the ruled surface $\pi_1: M_1\to C$. Pick any $Q\in |B|$. It suffices to show that $\phi$ preserves the fiber $\pi_1^{-1}(Q)$. 

Set $P=\pi(Q)$. Let $[(F,\eta)]\in \pi_1^{-1}(Q)$. Then $(E,\theta)=\pi_{\Par*}(F,\eta)$ represents an element in $M_{\frac{1}{2}P}$ after Lemma \ref{four fibers}. Because of our choice of $W_2(k)$-lifting for $(C,B)$, Lemma \ref{parabolic pullback commutes with flow operator} implies that
$$
\Gr\circ C^{-1}\pi_{\Par}^*(E,\theta)\cong \pi_{\Par}^*\Gr\circ C^{-1}(E,\theta).
$$
By the discussion at the end of \S3, we have $[\Gr\circ C^{-1}(E,\theta)]\in M_{\frac{1}{2}P}$. Then the last isomorphism shows that $\phi([(F,\eta)])\in \pi_1^{-1}(Q)$, as desired.
\end{proof}
	
We can even find some fixed points on the four distinguished fibers. For that, we label four two-torsion points by $\{Q_i\}_{1\leq i\leq 4}$ and set $Q_1$ to be the origin. Note that $(\sO_{C}(Q_i)\oplus \sO_{C}(-Q_i),\theta_{2[Q_i+Q_j]}))$ represents an intersection point of $\pi_1^{-1}(Q_i)$ with $\Sigma_{\mathrm{Higgs}}^{Q_j}$ (see Lemma \ref{four sections}). 
\begin{proposition}\label{intersection points are fixed}
		Notation as Proposition \ref{flow preserves M_1}. Then $\phi$ fixes the intersection points $\pi_1^{-1}(Q_i)\cap \Sigma_{\mathrm{Higgs}}^{Q_j}$ for each $i,j$.
	\end{proposition}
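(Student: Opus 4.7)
The plan is to transport the question from $\pi_1^{-1}(Q_i)$ to the $\P^1$-side moduli via the identifications of \S3--\S4, and then apply Sun--Yang--Zuo's flow $\varphi_{\lambda,p}$. Set $P_i=\pi(Q_i)\in D$ and $R_{ij}=\pi([Q_i+Q_j])\in D$; the latter lies in $D$ because $[Q_i+Q_j]$ is a $2$-torsion point. By the explicit parabolic pushforward computed in the proof of Lemma \ref{four fibers}, the intersection point
$$
[(\sO_C(Q_i)\oplus\sO_C(-Q_i),\eta_{2[Q_i+Q_j]})]\in\pi_1^{-1}(Q_i)\cap\Sigma^{Q_j}_{\Higgs}
$$
corresponds under the isomorphism $\pi_{\Par}^*\colon M_{\frac{1}{2}P_i}\xrightarrow{\cong}\pi_1^{-1}(Q_i)$ to the class $[(E_{\frac{1}{2}P_i},\theta_{R_{ij}})]\in M_{\frac{1}{2}P_i}$, a parabolic Higgs bundle whose Higgs zero is the branch point $R_{ij}$.

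Since $\phi$ preserves the fiber $\pi_1^{-1}(Q_i)$ by Proposition \ref{flow preserves M_1}, and $\pi_{\Par}^*$ commutes with the Higgs--de Rham flow by Lemma \ref{parabolic pullback commutes with flow operator}, the restriction $\phi|_{\pi_1^{-1}(Q_i)}$ is conjugate under $\pi_{\Par}^*$ to the parabolic flow operator on $M_{\frac{1}{2}P_i}$. By Proposition \ref{identification of self-maps}, this parabolic flow is further identified with $\varphi_{\lambda,p}$ on $M_{\Higgs}\cong\P^1$ under the natural identification $M_{\frac{1}{2}P_i}\cong M_{\Higgs}$, and the point $[(E_{\frac{1}{2}P_i},\theta_{R_{ij}})]$ is carried to $R_{ij}\in\P^1$. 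The whole statement therefore reduces to showing that $\varphi_{\lambda,p}$ fixes every branch point $R\in D$ pointwise.

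To establish this last claim, compute $\Gr\circ C^{-1}$ on $(\sO_{\P^1}\oplus\sO_{\P^1}(-1),\theta_R)$ with $R\in D$. The key local feature is that $\theta$ vanishes at the log pole $R$, so the Higgs field factors through $\sO_{\P^1}(-1)\otimes\Omega_{\P^1}(\log(D-R))$; equivalently, $\res_R\theta=0$. This vanishing constrains the residue of the resulting connection $C^{-1}(\theta_R)$ at $R$, and thus pins down the Harder--Narasimhan filtration of its underlying bundle. After the $\sO(m)$-twist of \eqref{twist by O(m)}, one reads off that the zero of the graded Higgs field lands again at $R$, so that $\varphi_{\lambda,p}(R)=R$. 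The main technical obstacle is this explicit local analysis at $R\in D$; it can be carried out either by specializing the matrix formulas of Proposition \ref{ses of E[G]} to the present configuration, or by applying the Ogus--Vologodsky formula for a rank two nilpotent log Higgs bundle on $(\P^1,D)$.
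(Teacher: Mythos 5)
Your first paragraph — reducing the claim via Lemma \ref{four fibers}, Lemma \ref{parabolic pullback commutes with flow operator}, and Proposition \ref{identification of self-maps} to showing that $\varphi_{\lambda,p}$ fixes every branch point $R\in D$ — is correct, and matches the paper's own use of those identifications. From there, however, you take a genuinely different route from the paper. The paper proves the reduced statement \emph{globally}: after a base change by an automorphism of $\P^1$, the parabolic Kodaira--Spencer system of the Legendre family (with parabolic weight $\frac{1}{2}$ at its quasi-semistable fiber) is exactly the point $[(E_{\frac{1}{2}P_i},\theta_{R_{ij}})]\in M_{\frac{1}{2}P_i}$; it is therefore motivic, hence periodic by Theorem \ref{Higgs periodicity theorem}, and since the only nontrivial parabolic weight has denominator $2$ the period is $1$. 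Your plan is instead \emph{local}, built on the observation that $\res_R\theta_R=0$. The local route is attractive because it is elementary and does not invoke the Higgs periodicity theorem; the paper's route is attractive because it sidesteps all local analysis.

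The place where your plan needs more care is the pivotal implication from $\res_R\theta_R=0$ to $\res_R\nabla=0$ for $(V,\nabla)=C^{-1}(E,\theta_R)$. This is not covered by any lemma in the paper: Lemma \ref{parabolic pullback commutes with flow operator} concerns log \'etale covers, not inclusions of log structures on a fixed curve. What you actually need is the statement that the log inverse Cartier transform is compatible with dropping the component $R$ from $D$ -- i.e., that $(E,\theta_R)$, viewed as a log Higgs bundle on $(\P^1,D-R)$ (which is legal precisely because $\res_R\theta_R=0$), has the same inverse Cartier whether computed relative to $(\tilde\P^1,\tilde D)$ or to $(\tilde\P^1,\tilde D-\tilde R)$. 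This is believable -- any log Frobenius lifting of $(\tilde\P^1,\tilde D)$ is also one of $(\tilde\P^1,\tilde D-\tilde R)$, and the exponential-twisting local formulas then literally agree -- but you should state and prove it, or locate a reference, rather than absorb it into ``constrains the residue.'' Also, ``pins down the Harder--Narasimhan filtration'' overstates what the residue gives you: the HN type is already supplied by \eqref{twist by O(m)}, and the residue argument only locates the zero of the graded Higgs field. Once the compatibility lemma is in place, your conclusion that the zero lands at $R$ and hence $\varphi_{\lambda,p}(R)=R$ is correct.
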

	\begin{proof}
		Recall that the Legendre family, with affine equation $y^2=x(x-1)(x-t)$, has two semistable fibers at $t=0,1$, one quasi-semistable fiber at $t=\infty$ and is smooth elsewhere. Its associated logarithmic Kodaira-Spencer bundle is of form
		\begin{equation*}
			\sO_{\Pl}\stackrel{\theta_{\lambda}}{\longrightarrow}\sO_{\Pl}(-1)\otimes \Omega_{\P^1}(D).
		\end{equation*}
		Here we add artificially a zero on the Higgs field at $\lambda$, because we add one more pole $\lambda$ in the logarithmic differential form. Let $P_i$ be the image of $Q_i$ under the projection $\pi$. Using a simple automorphism of $\P^1$ as base change, we pullback the Legendre family to a family which has the unique quasi-semistable (but not semistable) fiber at $Q_i$ and whose associated Kodaira-Spencer map has one zero at $[Q_i+Q_j]$ with $j\neq 1$. 
		
		Consider the associated canonical parabolic Kodaira-Spencer system to the new family. It represent an element in $M_{\frac{1}{2}P_i}$. By the Higgs periodicity theorem (Theorem \ref{Higgs periodicity theorem}), it is periodic. 
		Since the denominator of a nontrivial parabolic weight is two, it is actually one-periodic. Therefore, its parabolic pullback, which represents an intersection point in the proposition, is also one-periodic. In other words, its isomorphism class is fixed by $\phi$. 
		For the remaining intersection points contained in $\Sigma_{\mathrm{Higgs}}$, one may artificially put the parabolic structure at the same point as the zero of the Kodaira-Spencer map. 
		By Proposition \ref{identification of self-maps}, they are also one-periodic.           
	\end{proof}
	
Pick any $P\in C(k)$ and form the graded logarithmic Higgs bundle $(F,\eta)$
$$
\eta_{2\sigma(P)}: \sO_C(P)\to \sO_{C}(-P)\otimes \Omega_C(B).
$$
Let $(F',\eta')=\Gr\circ C^{-1}(F,\eta)$. By Proposition \ref{flow preserves M_1}, there exists some $P'\in C(k)$ and an effective Weil divisor $R'+S'$ of length two satisfying $[R'+S']=[2\sigma(P')]$, such that $(F',\eta')$ is isomorphic to 
$$
\eta_{R'+S'}: \sO_C(P')\to \sO_{C}(-P')\otimes \Omega_C(B).
$$
We are going to show $R'=S'$. The geometry of the flow morphism acting on the ruled surface $M_1$ plays an important role. 

Set $(V,\nabla)=C^{-1}(F,\eta)$. Then $V\cong F'$. Clearly, 
$$
(V,\nabla)[G]:=(V,\nabla)\oplus \sigma^*(V,\nabla)\cong C^{-1}((F,\eta)[G]).
$$
Since $C^{-1}$ is an exact functor, it follows from Proposition \ref{ses of E[G]} that we have a short exact sequence of logarithmic connections:
\begin{equation}\label{s.e.s. of V[G]}
	0 \to (V_{1}, \nabla_{1}) \to (V, \nabla)[G] \to (V_{2}, \nabla_{2}) \to 0.
	\end{equation}
Here 
\begin{equation*}
	\begin{array}{rcl}
		(V_1,\nabla_1)&\cong &C^{-1}\pi_{\Par}^*(\sO\oplus \sO(-1),\theta_{x})\\
		&\cong& \pi_{\Par}^*C^{-1}(\sO\oplus \sO(-1),\theta_{x}) \\
		&\cong& \pi^*C^{-1}(\sO\oplus \sO(-1),\theta_{x}).
	\end{array}
	\end{equation*}
In above, $x=\pi(P)$, and the last isomorphism holds because the parabolic structure on $\sS$ in Proposition \ref{ses of E[G]} is trivial. Then
$$
(V_2,\nabla_2)\cong \pi^*C^{-1}(\sO\oplus \sO(-1),\theta_{x})\otimes (\pi^*\sO(p),\nabla_{can}).
$$
The bundles $V[G],V_1,V_2$ are all unstable (and hence decomposable). Let $HN_{i}$ (resp. $HN$) be the maximal destabilizer of $V_{i}$ (resp. $V[G]$).  
\begin{lemma}\label{compatibility of HN}
	Notation as above. Then 
$$
HN\cap V_1=HN_1;\quad HN/HN_1=HN_2.
$$
\end{lemma}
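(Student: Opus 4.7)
The statement decomposes into two claims: (i) $HN\cap V_1=HN_1$ and (ii) the induced map from $HN/HN_1$ into $V_2$ identifies it with $HN_2$. My overall strategy is a degree squeeze argument combined with the uniqueness of maximal destabilizers, together with a separate case analysis to pin down the rank of the intersection $HN\cap V_1$.

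First, I would assemble the degree bookkeeping. Since $C^{-1}$ multiplies the degree of the underlying bundle by $p$, the Sun-Yang-Zuo identity
\[
\Gr\circ C^{-1}(\sO\oplus\sO(-1),\theta_x)\cong (\sO(m)\oplus\sO(m-1),\theta_{x'}),\qquad m=\tfrac{1-p}{2},
\]
yields $\deg V_1=-2p$, $\deg V_2=2p$, $\deg HN_1=1-p$, and $\deg HN_2=p+1$. Since $V\in M_1$ has trivial determinant with HN sub $\sO(P')$ of degree $1$, one gets $\deg HN=2$ and $\deg V[G]=0$. The key numerical identity is
\[
\deg HN_1+\deg HN_2=(1-p)+(p+1)=2=\deg HN,
\]
consistent with the asserted extension $0\to HN_1\to HN\to HN_2\to 0$.

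Next, I would pin down $\rank(HN\cap V_1)$ to be $1$. Rank $2$ is excluded as follows: if $\rank(HN\cap V_1)=2$, then the induced map $HN\to V_2$ has generically zero rank, hence factors through the torsion subsheaf of $V_2$, which is zero since $V_2$ is locally free; therefore $HN\subset V_1$, forcing $2=\deg HN\le\deg V_1=-2p$, a contradiction. Rank $0$ is the subtler case: it would produce an embedding $HN\oplus V_1\hookrightarrow V[G]$ with torsion cokernel of length $2p-2$, and eliminating it rests on the explicit diagonal shape of $V_1\hookrightarrow V[G]$ coming from the embedding $\sO_C\xrightarrow{(s,\sigma^*s)}\sO_C(P)\oplus\sO_C(\sigma P)$ in Proposition \ref{ses of E[G]} and the $G$-equivariance of the HN filtration of $V[G]$ (which follows from its uniqueness). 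This is the main obstacle: degree inequalities alone are too weak, and one has to exploit the specific BIS description of $V_1$ as a parabolic pullback from $\P^1$.

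Once $\rank(HN\cap V_1)=1$, the squeeze is immediate. The upper bound $\deg(HN\cap V_1)\le\mu_{\max}(V_1)=1-p$ follows from $HN_1$ being the maximal destabilizer of $V_1$. For the lower bound, the quotient $HN/(HN\cap V_1)$ embeds as a rank-$1$ subsheaf of $V_2$, so $\deg(HN/(HN\cap V_1))\le\mu_{\max}(V_2)=p+1$, whence $\deg(HN\cap V_1)\ge\deg HN-(p+1)=1-p$. Equality forces $HN\cap V_1$ to be a sub line bundle of $V_1$ realising its maximal slope, and by uniqueness of the maximal destabiliser in rank one this is $HN_1$. The parallel uniqueness statement in $V_2$ identifies the image $HN/HN_1\hookrightarrow V_2$, which now has degree $p+1=\mu_{\max}(V_2)$, with $HN_2$, completing the proof.
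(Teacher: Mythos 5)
Your strategy is essentially the paper's: compute the relevant degrees, show $HN\cap V_1$ is a line bundle, and then identify it with $HN_1$. For the final identification your squeeze argument is cleaner than the paper's. You bound $\deg(HN\cap V_1)$ from above by $\mu_{\max}(V_1)=1-p$ and from below by $\deg HN-\mu_{\max}(V_2)=2-(p+1)=1-p$, whence equality and uniqueness of the maximal destabilizer in each of $V_1$ and $V_2$ finish the job. The paper instead fixes a complement $HN^\perp$, obtains $V_1\cong L_1\oplus L_2$ with $L_1=HN\cap V_1$ and $L_2=HN^\perp\cap V_1$, matches $\{\deg L_1,\deg L_2\}=\{1-p,-1-p\}$, and rules out $L_1=HN_1^\perp$ by varying the splitting and noting the intersection of all complements is zero. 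Your version avoids choosing any splitting and is shorter.

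The genuine gap is the exclusion of $\rank(HN\cap V_1)=0$, which you correctly flag as the subtle point but do not close. The paper disposes of it in one line, namely ``Since $\deg V[G]=0\neq\deg V_1+\deg HN$, it follows $\rank(V_1\cap HN)\neq 0$.'' Taken literally this is not a valid degree argument: the direct sum of two saturated subsheaves of complementary ranks with trivial intersection need not be the whole bundle, so a degree mismatch does not by itself force a nonzero intersection. (Already on $\P^1$, the two copies of $\sO_{\P^1}(-1)$ inside $\sO_{\P^1}^{\oplus 2}$ embedded by the nowhere-vanishing sections $(x,y)$ and $(x,-y)$ are saturated, meet in zero, and have degree sum $-2\neq 0$.) So your instinct that degree bookkeeping alone is insufficient here is sound, and the extra input you point to, namely the explicit diagonal shape of the embedding $V_1\hookrightarrow V[G]$ inherited from Proposition \ref{ses of E[G]} via $C^{-1}$ together with the $G$-equivariance of the HN filtration, is the right kind of ingredient. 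But your proposal does not carry this out, and without it the rank-$0$ case is unaddressed, so the proof is incomplete at precisely the step where the paper's own justification is weakest.
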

\begin{proof}
Consider first the intersection $V_1\cap HN\subset V[G]$. It is saturated because both $HN_1$ and $V_1$ are saturated. Note that the degrees of $ HN $ and $ V_{1} $ are $ 2 $ and $ -2p $ respectively. So $\rank (V_1\cap HN)<2$. Since $\deg V{G} =0\neq \deg V_1+\deg HN$, it follows $\rank (V_1\cap HN) \neq 0$. Consequently, $ V_1 \cap HN $ is a sub line bundle and we denote it by $ L_{1} $. Fix a splitting $V[G] \cong HN \oplus HN^{\perp}$ of the short exact sequence
$$
0\to HN\to V[G]\to V[G]/HN\to 0.
$$
Running the same argument for $V_1\cap HN^{\perp}$, one gets a second sub line bundle $L_{2}$ of $V_1$. Clearly $V_1 \cong L_1\oplus L_2 $. Juxtaposing $L_1 \oplus L_2 \cong HN_{1}\oplus HN_{1}^{\perp}$, we find that $\deg L_1\neq \deg L_2$, and moreover
$$
\max \left\lbrace \deg L_1, \deg L_2 \right\rbrace =\deg HN_1=-p+1, \min \left\lbrace \deg L_1, \deg L_2 \right\rbrace =\deg HN_1^{\perp}=-p-1.
$$ 
We claim $L_{1} = HN_{1}$. Consider the set $\mathbb S$ of all subbundles $H$ in $V{G}$ which split the above exact sequence. It is not difficult to see that the subbundle $\cap_{H\in \mathbb S}H$ is simply zero. It follows $\cap_{H\in \mathbb S}(H \cap V_{1})=0$. In other words, all sub line bundles inside $HN$ which split the following exact sequence
$$
0\to L_1\to HN\to HN/L_1\to 0.
$$
intersect to zero. It follows that $L_2$ cannot be the maximal destabilizer. Thus the claim is proved. By taking the quotient, we obtain a sub line bundle $HN/HN_1\subset V[G]/HN_1=V_2$. It is of degree $(p+1)$, hence equals the maximal destabilizer of $V_{2}$. This completes the proof. 
\end{proof}
\begin{proposition}\label{double zero}
Let $(F,\eta)$ and $(F',\eta')$ be as above. Then
$$
[(F',\eta')]\in \bigcup_{i,j}\pi_1^{-1}(Q_i)\cup \Sigma^{Q_j}_{\mathrm{Higgs}}.
$$
\end{proposition}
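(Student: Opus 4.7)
The plan is to exploit the short exact sequence \eqref{s.e.s. of V[G]} and promote it to one of graded Higgs bundles via the compatibility of HN filtrations in Lemma \ref{compatibility of HN}. This gives an exact sequence of $G$-equivariant graded logarithmic Higgs bundles on $C$:
$$0 \to \Gr(V_1,\nabla_1) \to \Gr(V[G],\nabla[G]) \to \Gr(V_2,\nabla_2) \to 0.$$
The outer terms admit explicit descriptions: using the identification $(V_1,\nabla_1)\cong \pi^*C^{-1}(\sO_{\Pl}\oplus\sO_{\Pl}(-1),\theta_x)$ and the commutativity of $C^{-1}$ with $\pi^*$ (Lemma \ref{parabolic pullback commutes with flow operator}), one computes
$$\Gr(V_1,\nabla_1) \cong \pi^*\bigl((\sO_{\Pl}\oplus \sO_{\Pl}(-1),\theta_{x'})\otimes \sO_{\Pl}(\tfrac{1-p}{2})\bigr),$$
where $x'\in \Pl$ is the unique simple zero of the induced Higgs field on $\Pl$, together with $\Gr(V_2,\nabla_2)\cong \Gr(V_1,\nabla_1)\otimes (\pi^*\sO_{\Pl}(p),0)$.

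Writing $L=\sO_C(P')$ so that $\Gr(V,\nabla)=(F',\eta')=(L\oplus L^{-1},\eta')$, the central term is $\Gr(V[G],\nabla[G])=(F',\eta')\oplus\sigma^*(F',\eta')$, with underlying bundle $(L\oplus\sigma^*L)\oplus(L^{-1}\oplus\sigma^*L^{-1})$ and diagonal Higgs field $\eta'\oplus\sigma^*\eta'$. The $G$-equivariance of the inclusion $\Gr(V_1)\hookrightarrow \Gr(V[G])$, combined with the triviality of the $G$-action on $\Gr(V_1)$, forces the inclusion on $(1,0)$-parts to factor through the $G$-invariant part of $L\oplus \sigma^*L$; hence it takes the form $(a,\sigma^*a):\pi^*\sO((1-p)/2)\to L\oplus\sigma^*L$ for some morphism $a$, and an analogous $(c,\sigma^*c)$ appears on the $(0,1)$ part. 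Commutativity of this inclusion with the Higgs fields then yields the identity of sections
$$a\cdot \eta' \,=\, c\cdot \pi^*\theta_{x'},$$
which translates into the divisor equation
$$D_a+Z \,=\, D_c+\pi^*(x') \qquad \text{on } C,$$
where $Z=\mathrm{div}(\eta')$ is the degree-$2$ zero divisor of $\eta'$, and $D_a,D_c$ are effective of degree $p$.

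The hard part will be extracting the geometric conclusion from this divisor identity. The argument proceeds by cases on whether $P'\in |B|$. If $P'$ is a two-torsion point then $L^{\otimes 2}\cong \sO_C$ and $(F',\eta')$ automatically lies in the special fiber $\pi_1^{-1}(Q_i)$ over the corresponding $Q_i\in|B|$. Otherwise $L\not\cong \sigma^*L$, in which case standard vanishing arguments on the elliptic curve constrain $a$ and $c$. Combining the divisor identity with its image under $\sigma$, and exploiting the $\sigma$-symmetry of $\pi^*(x')$, I would remove the common part of $D_a$ and $D_c$ and argue that the residual constraint forces $Z=2R'$ for some $R'\in C(k)$. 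By Lemma \ref{four sections} this places $(F',\eta')$ in one of the sections $\Sigma^{Q_j}_{\Higgs}$. The delicate step is the final divisor bookkeeping: controlling how $D_a$ and $D_c$ interact with the $\sigma$-symmetric divisor $\pi^*(x')$ under the involution in order to pin down the shape of $Z$.
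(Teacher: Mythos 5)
Your starting point is the same as the paper's: apply the grading functor to the short exact sequence \eqref{s.e.s. of V[G]}, using Lemma \ref{compatibility of HN} to get an exact sequence of graded Higgs bundles whose outer terms are $\pi^*\bigl((E,\theta_{x'})\otimes \sO(-m)\bigr)$ and $\pi^*\bigl((E,\theta_{x'})\otimes \sO(m+1)\bigr)$ with $m=\tfrac{p-1}{2}$; your computation of these outer terms is correct. But from there you take a detour that does not close. You try to pin down the inclusion morphism itself (writing it as $(a,\sigma^*a)$ on the $(1,0)$-part and $(c,\sigma^*c)$ on the $(0,1)$-part) and then read off the single relation $a\cdot\eta'=c\cdot\pi^*\theta_{x'}$, hence the divisor identity $D_a+Z=D_c+\pi^*(x')$ with unknown effective divisors $D_a,D_c$ of degree $p$. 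The presence of these unknowns is a genuine problem: they live in different linear systems (roughly $|{\sO_C(P')\otimes\pi^*\sO((p-1)/2)}|$ and $|{\sO_C(-P')\otimes\pi^*\sO((p+1)/2)}|$), so you cannot expect them to cancel or be $\sigma$-symmetric, and neither adding nor subtracting the $\sigma$-conjugate of your identity removes them. You acknowledge this ``delicate step'' of ``divisor bookkeeping'' as unfinished, and I do not see how to complete it along these lines. (Also, the phrase ``factor through the $G$-invariant part of $L\oplus\sigma^*L$'' is not quite the right statement of the $G$-equivariance constraint; what one can say is that the inclusion takes the shape $(a,\sigma^*a)$ up to a sign, but this by itself is not enough.)

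The paper avoids the unknowns entirely by passing to determinants of the whole exact sequence. From the commutative ladder of $(1,0)\to(0,1)\otimes\Omega_C(B)$ maps, one gets a commutative square between $\det(HN)\cong HN_1\otimes HN_2$ and $\det(HN^\perp\otimes\Omega_C(B))\cong HN_1^\perp\otimes HN_2^\perp\otimes\Omega_C(B)^{\otimes 2}$, identifying $\det(\eta'\oplus\sigma^*\eta')$ with $(\pi^*\theta_{x'})^{\otimes 2}$. Taking divisors yields the clean relation
$$
R'+S'+\sigma(R')+\sigma(S')\;=\;\operatorname{div}\bigl(\det(\eta'\oplus\sigma^*\eta')\bigr)\;=\;\operatorname{div}\bigl((\pi^*\theta_{x'})^{\otimes 2}\bigr)\;=\;2\pi^*(x'),
$$
with no auxiliary divisors at all. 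Comparing $\pi$-images of the supports then forces $R'=S'$ or $R'=\sigma(S')$, and the two cases land in $\cup_j\Sigma^{Q_j}_{\Higgs}$ or $\cup_i\pi_1^{-1}(Q_i)$ respectively, exactly as required. So the fix for your argument is short: do not try to describe the sub-object morphisms themselves; instead take the determinant of the exact sequence of Higgs fields, which is exactly the device that makes the unknowns $D_a,D_c$ disappear.
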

\begin{proof}
Thanks to Lemma \ref{parabolic pullback commutes with flow operator} and Lemma \ref{compatibility of HN}, taking the grading functor to \ref{s.e.s. of V[G]} yields the following short exact sequence:
\begin{equation*}\label{the short exact sequence after flow operator}
			0 \to \pi^{*}((E, \theta_{x'})\otimes (\sO(-m),0)) \to (F',\eta')[G] 
			\to \pi^{*}((E, \theta_{x'})\otimes (\sO(m+1),0)) \to 0,
\end{equation*}
where $E=\sO\oplus \sO(-1)$, $x'=\varphi_{\lambda,p}(x)$ and $m=\frac{p-1}{2}$.

Consider the $(1,0)$-part of the Higgs structure in the above exact sequence, which may be written as follows:
		\begin{equation*}
			\begin{tikzcd}
				0 \arrow[r] & HN_{1} \arrow[r]\arrow[d,"\pi^{*}\theta_{x'}"]& HN \arrow[r]\arrow[d,"\eta' \oplus \sigma^*{\eta'}"] &HN_{2} \arrow[r]\arrow[d,"\pi^{*}\theta_{x'}"]& 0 \\
				0 \arrow[r]& HN_{1}^{\perp} \otimes \Omega_{C}(B) \arrow[r]& HN^{\perp} \otimes\Omega_{C}(B) \arrow[r]&HN_{2}^{\perp}\otimes\Omega_{C}(B)\arrow[r] & 0.
			\end{tikzcd}
		\end{equation*}
		So we obtain the following commutative diagram:
		\begin{equation*}
			\begin{tikzcd}
				HN_{1} \otimes HN_{2} \arrow[r,"\cong"]\arrow[d,"(\pi^{*}\theta_{x'})^{\otimes 2}"'] & \det HN \arrow[d,"\det(\eta' \oplus \sigma^*{\eta'})"] \\
				HN_{1}^{\perp} \otimes HN_{2}^{\perp} \otimes (\Omega_{C}( B))^{\otimes 2} \arrow[r,"\cong"] & \det( HN^{\perp}  \otimes (\Omega_{C}(B))).
			\end{tikzcd}
		\end{equation*}
		We deduce then the following equality of Weil divisors:
$$
2\pi^*x'=\mathrm{div}((\pi^{*}\theta_{x'})^{\otimes 2})=\mathrm{div}(\det(\eta' \oplus \sigma^*{\eta'}))=R'+S'+\sigma(R')+\sigma(S').
$$
Comparing the images of their supports under $\pi$, we immediately conclude that either  $R'=\sigma(S')$ or $R'=S'$ holds (or both hold). In the former case, we have
$$
[2\sigma(P')]=[R'+S']=[\sigma(S')+S'],
$$
which is the origin. So $P'$ is a two torsion point. Then $[(F',\eta')]\in \cup_i\pi_1^{-1}(Q_i)$ in this case. In the latter case, $\eta'^{1,0}$ vanishes only at one point. So by Lemma \ref{four sections}, $[(F',\eta')]\in \cup_j\Sigma_{\Higgs}^{Q_j}$. The proof is completed.  
\end{proof}
Finally, we can prove the main result of the section. The following picture gives a good orientation of the situation. 

\begin{center}
		\begin{tikzpicture}
		
		\draw [line width=2.5pt](-1.2,0) -- (1.8,0);
		\node[below] at (-1.6,0) {$ \Sigma_{\Higgs} $};
		\draw [line width=1pt](-1.2,0.5) -- (1.8,0.5);
		\draw [line width=1pt](-1.2,-0.5) -- (1.8,-0.5);
		\draw [line width=1pt](-1.2,1) -- (1.8,1);
		\draw [line width=1pt](0,1.5) -- (0,-1);
		\draw [line width=1pt](0.6,1.5) -- (0.6,-1);
		\draw [line width=1pt](-0.6,1.5) -- (-0.6,-1);
		\draw [line width=1pt](1.2,1.5) -- (1.2,-1);
		\fill (0,0) circle (.1);
		\fill (-0.6,0) circle (.1); 
		\fill (0.6,0) circle (.1); 
		\fill (1.2,0) circle (.1);

		\draw [line width=2.5pt](3.3,0) -- (6.3,0);
		\node[below] at (6.7,0) {$ \Sigma_{\Higgs} $};
		\draw [line width=1pt](3.3,0.5) -- (6.3,0.5);
		\draw [line width=1pt](3.3,-0.5) -- (6.3,-0.5);
		\draw [line width=1pt](3.3,1) -- (6.3,1);
		\draw [line width=1pt](4.5,1.5) -- (4.5,-1);
		\draw [line width=1pt](5.1,1.5) -- (5.1,-1);
		\draw [line width=1pt](3.9,1.5) -- (3.9,-1);
		\draw [line width=1pt](5.7,1.5) -- (5.7,-1);
		\fill (4.5,0) circle (.1);
		\fill (5.1,0) circle (.1); 
		\fill (3.9,0) circle (.1); 
		\fill (5.7,0) circle (.1); 
		
		\node [above] at (2.55,0.25) {$\phi$};
		\draw [->] (2.05,0.25) -- (3.05,0.25);
		\end{tikzpicture}
	\end{center}

\begin{corollary}\label{flow morphsim preserving Sigma}
The flow morphism $\phi$ preserves $\Sigma_{\mathrm{Higgs}}$.     
\end{corollary}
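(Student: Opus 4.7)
The plan is to combine Propositions \ref{flow preserves M_1}, \ref{double zero}, and \ref{intersection points are fixed}. By Proposition \ref{flow preserves M_1}, $\phi$ sends $\Sigma_{\Higgs}$ into $M_1$; by Proposition \ref{double zero}, the image lies pointwise in the reducible closed subset
\[
Z := \Big(\bigcup_{i=1}^{4}\pi_1^{-1}(Q_i)\Big) \cup \Big(\bigcup_{j=1}^{4}\Sigma^{Q_j}_{\Higgs}\Big) \subset M_1,
\]
whose eight irreducible components are the four distinguished fibers and the four sections of $\pi_1$. Since $\Sigma_{\Higgs} \cong C$ is irreducible and $\phi$ is a morphism of projective $k$-schemes, $\phi(\Sigma_{\Higgs})$ is an irreducible closed subset of $Z$, and hence is contained in exactly one of these eight components. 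Identifying which component is all that remains.

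To pin it down, I will exploit the fixed points supplied by Proposition \ref{intersection points are fixed}: every one of the sixteen intersection points $x_{ij} := \pi_1^{-1}(Q_i) \cap \Sigma^{Q_j}_{\Higgs}$ is fixed by $\phi$. With the convention that $\Sigma_{\Higgs} = \Sigma^{Q_1}_{\Higgs}$, where $Q_1$ is the origin, the four points $x_{i1}$ ($i=1,\dots,4$) all lie in $\Sigma_{\Higgs}$, are fixed by $\phi$, and therefore belong to $\phi(\Sigma_{\Higgs})$. These four fixed points are distributed one per fiber, so no single fiber $\pi_1^{-1}(Q_i)$ can contain all of them; this rules out the four fiber components of $Z$. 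Among the section components, an intersection point $x_{ij}$ lies on $\Sigma^{Q_j}_{\Higgs}$ only for that specific $j$, so the only section that contains all four points $x_{i1}$ is $\Sigma^{Q_1}_{\Higgs} = \Sigma_{\Higgs}$. This forces $\phi(\Sigma_{\Higgs}) \subset \Sigma_{\Higgs}$, as desired.

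I do not foresee a substantive obstacle at this stage, because the heavy lifting is already packaged into the preceding results: the Harder-Narasimhan analysis of the sequence \eqref{s.e.s. of V[G]} together with the determinantal identity $2\pi^{*}x' = R'+S'+\sigma(R')+\sigma(S')$ supplies Proposition \ref{double zero}, and the Higgs periodicity theorem applied to the Legendre family supplies Proposition \ref{intersection points are fixed}. What remains for the present step is only the elementary observation that an irreducible subvariety of a reducible closed subset lies in a single component, together with the combinatorial bookkeeping that each intersection point $x_{ij}$ determines its fiber and its section uniquely.
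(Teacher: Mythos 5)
Your proof is correct, and it reaches the conclusion by a route that is genuinely different from (and somewhat leaner than) the one in the paper. The paper first applies the rigidity lemma for $\P^1$-bundles to produce a morphism $\bar\phi\colon C\to C$ on the base of $\pi_1$, uses Proposition \ref{intersection points are fixed} to show $\bar\phi$ is non-constant (hence $\phi(\Sigma_{\Higgs})$ is not a fiber), then passes to an open dense $\Sigma_{\Higgs}^0$ whose image avoids the distinguished fibers, invokes Proposition \ref{double zero} and irreducibility to place $\phi(\Sigma_{\Higgs}^0)$ in a single section, takes Zariski closure, and finally pins down the section using the fixed points. Your argument short-circuits the rigidity-lemma step: since $\Sigma_{\Higgs}\cong C$ is proper and $M_1$ is separated, $\phi(\Sigma_{\Higgs})$ is a closed irreducible subset of the union $Z$ of the four fibers and four sections, hence contained in a single irreducible component of $Z$; the four fixed points $x_{i1}\in\Sigma_{\Higgs}\cap\phi(\Sigma_{\Higgs})$, which lie in four distinct fibers and lie on the section $\Sigma^{Q_j}_{\Higgs}$ only for $j=1$, then uniquely identify that component as $\Sigma_{\Higgs}$ itself. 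Both proofs consume exactly the same inputs (Propositions \ref{flow preserves M_1}, \ref{double zero}, and \ref{intersection points are fixed}); yours trades the rigidity-lemma detour for a direct ``irreducible closed subset of a finite union lies in one component'' argument, which is more elementary and shorter, at the small cost of invoking properness of $\Sigma_{\Higgs}$ explicitly to guarantee the image is closed. One cosmetic quibble: an irreducible closed subset of $Z$ lies in \emph{at least} one, not necessarily \emph{exactly} one, irreducible component; this does not affect the rest of your argument since the four fixed points single out a unique candidate in any case.
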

\begin{proof}
For any closed point $P\in C$, consider the composite 
$$
\P^1\cong \pi_1^{-1}(P)\to M_1\stackrel{\phi}{\to} M_1\stackrel{\pi_1}{\to} C.
$$
It has to be constant. Hence by rigidity lemma (\cite[Section 4, page 43]{Mum70}), $\phi$ induces a morphism $\bar \phi: C\to C$ making the following diagram commute:
$$
 \xymatrix{M_1\ar[r]^-{\phi}\ar[d]_-{\pi_1}&M_1\ar[d]^-{\pi_1}\\
		C\ar[r]^-{\bar \phi}&C.}
  $$
By Proposition \ref{intersection points are fixed}, $\bar \phi$ is non-constant. Therefore, the composite  
$$
\Sigma_{\Higgs}\hookrightarrow M_1\stackrel{\pi_1}{\to} C\stackrel{\bar \phi}{\to} C
$$
is non-constant. It follows that $\phi(\Sigma_{\Higgs})$, which is an irreducible curve in $M_1$, cannot be a closed fiber. So $\phi^{-1}(\phi(\Sigma_{\Higgs})\cap \cup_i\pi_1^{-1}(Q_i))$ is a proper closed subset of $\Sigma_{\Higgs}$. Let $\Sigma_{\Higgs}^0$ be its complement, which is open and hence irreducible. So there is some $Q_{j_0}$ such that $\phi(\Sigma_{\Higgs}^0)\subset \Sigma_{\Higgs}^{Q_{j_0}}$. Since $\phi$ is continuous in Zariski topology, it follows that $\phi(\Sigma_{\Higgs})\subset \Sigma_{\Higgs}^{Q_{j_0}}$. Using Proposition \ref{intersection points are fixed} again, we know that $Q_{j_0}$ must be the origin $Q_1$. This completes the whole proof.
\end{proof}

 \section{Proof of Sun-Yang-Zuo conjecture}
 In this section, we prove Theorem \ref{main result}. The major input, obtained from the discussions in \S3-5, is a moduli interpretation of the diagram \ref{conjecture}. 
\begin{proposition}\label{decent}
Notations as in \S3-\S5. Write the restriction of the flow morphism $\phi$ to $\Sigma_{\Higgs}$ by $\phi_{\lambda,p}$. Then the following diagram commutes	
		\begin{equation}\label{commutitivity of flow operator}
			\begin{tikzcd}
				\Sigma_{\mathrm{Higgs}} \arrow[r,"\phi_{\lambda,p}"] 
				\arrow[d,"\pi_{\mathrm{Higgs}}"'] & \Sigma_{\mathrm{Higgs}} 
				\arrow[d,"\pi_{\mathrm{Higgs}}"] \\
				M_{\mathrm{Higgs}} \arrow[r,"\varphi_{\lambda,p}"] & M_{\mathrm{Higgs}},
			\end{tikzcd}  
		\end{equation}	
  where $\varphi_{\lambda,p}$ is the flow morphism of Sun-Yang-Zuo.
	\end{proposition}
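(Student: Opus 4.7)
The plan is to derive the commutativity from the functoriality of the flow operator---specifically Lemma~\ref{parabolic pullback commutes with flow operator} and the BIS correspondence---by chasing natural transformations through the moduli interpretation built up in \S3--\S5.

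By the universal corepresentability of the moduli, it suffices to verify the identity on $k$-points. Fix $[(F,\eta)]\in \Sigma_{\Higgs}(k)$ and set $(F',\eta'):=\Gr\circ C^{-1}(F,\eta)$, which represents $\phi_{\lambda,p}([F,\eta])$ by Corollary~\ref{flow morphsim preserving Sigma}; let $(\sE,\Theta)$ be the ordinary Harder--Narasimhan maximal destabilizer of $\pi_{\Par*}((F,\eta)[G])$, representing $\pi_{\Higgs}([F,\eta])$ by Corollary~\ref{max destabilizer}. The goal reduces to identifying $\pi_{\Higgs}([F',\eta'])$ with $\varphi_{\lambda,p}([\sE,\Theta])$.

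Two functorialities do the work. First, $C^{-1}$ commutes with $\sigma^{*}$ by naturality and the uniqueness of the lifted involution $\tilde\sigma$; combined with the $G$-equivariance of the HN filtration (itself a consequence of uniqueness), this yields $\Gr\circ C^{-1}((F,\eta)[G])\cong (F',\eta')[G]$ as $G$-equivariant graded Higgs bundles on $(C,B)$. Second, Lemma~\ref{parabolic pullback commutes with flow operator} asserts that the flow operator commutes with $\pi_{\Par}^{*}$; since $\pi_{\Par*}$ and $\pi_{\Par}^{*}$ are inverse equivalences by the BIS correspondence, conjugating the lemma gives
\[
\pi_{\Par*}\circ \Gr\circ C^{-1}\cong \Gr\circ C^{-1}\circ \pi_{\Par*}.
\]
Applying this to $(F,\eta)[G]$ produces $\pi_{\Par*}((F',\eta')[G])\cong \Gr\circ C^{-1}\bigl(\pi_{\Par*}((F,\eta)[G])\bigr)$.

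To conclude, I would extract the ordinary HN maximal destabilizer from both sides: since $C^{-1}$ is an equivalence preserving ranks and rescaling slopes by $p$, and $\Gr$ merely splits the HN filtration, the maximal destabilizer itself commutes with $\Gr\circ C^{-1}$. The left-hand side then yields $\pi_{\Higgs}(\phi_{\lambda,p}([F,\eta]))$ and the right-hand side yields $\Gr\circ C^{-1}(\sE,\Theta)$, which by Proposition~\ref{identification of self-maps} coincides with $\varphi_{\lambda,p}(\pi_{\Higgs}([F,\eta]))$ under the identification $M_{\Higgs}\cong M_{\frac{1}{2}P}$. The main obstacle is precisely this final step: the ordinary HN filtration of $\pi_{\Par*}((F,\eta)[G])$ used in Corollary~\ref{max destabilizer} must be shown to be compatible with the parabolic Simpson filtration governing $\Gr\circ C^{-1}$ on $(\P^{1},D)$, an assertion in the spirit of (a parabolic analog of) Lemma~\ref{compatibility of HN}.
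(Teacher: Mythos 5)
Your proposal takes a genuinely different route from the paper: you push $(F,\eta)[G]$ down to $\P^1$ first and then try to commute the flow operator on $(\P^1,D)$ past the maximal destabilizer, whereas the paper works almost entirely over $C$ and descends at the very end by a divisor computation. More precisely, the paper's proof is short: by Corollary~\ref{max destabilizer}, $\pi_{\Higgs}([F(P)])=[(E,\theta_{\pi(P)})]$; by Corollary~\ref{flow morphsim preserving Sigma} the flow keeps one in $\Sigma_{\Higgs}$, so $\Gr\circ C^{-1}(F(P),\eta_{2\sigma(P)})=(F(P'),\eta_{2\sigma(P')})$; and the divisor identity $2\pi^{*}x'=R'+S'+\sigma(R')+\sigma(S')$ read off from Proposition~\ref{double zero} (where $x'=\varphi_{\lambda,p}(\pi(P))$) together with $R'=S'=\sigma(P')$ gives $\pi(P')=x'$. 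All the hard work---Lemma~\ref{compatibility of HN} and the graded sequence of $(F',\eta')[G]$---is packaged in Proposition~\ref{double zero}, which is over $C$, not $\P^1$.

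There is a genuine gap in your argument, and you diagnose its location correctly but then do not close it. The assertion in your penultimate sentence, ``since $C^{-1}$ is an equivalence \dots and $\Gr$ merely splits the HN filtration, the maximal destabilizer itself commutes with $\Gr\circ C^{-1}$,'' is false in general: $C^{-1}$ sends a Higgs bundle to a flat bundle, and the HN type of the \emph{underlying} bundle of $C^{-1}(E,\theta)$ is in no way a simple rescaling of the HN type of $E$ (e.g.\ $C^{-1}$ of the unstable uniformizing Higgs bundle can have stable underlying bundle). So the maximal destabilizer does not commute with $\Gr\circ C^{-1}$ without an argument, and in fact the argument is exactly the point of Lemma~\ref{compatibility of HN} in the rank-four setting over $C$. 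Your final sentence then acknowledges that a ``parabolic analog of Lemma~\ref{compatibility of HN}'' over $\P^1$ is needed; this is precisely what is missing. Proving such a lemma for $\pi_{\Par*}((F,\eta)[G])$ directly on $\P^1$ is feasible by the same degree bookkeeping as in Lemma~\ref{compatibility of HN}, but it is not a formal consequence of the equivalence properties you cite. The paper avoids this by proving the compatibility over $C$ (where the s.e.s.\ \ref{s.e.s. of V[G]} and the explicit degrees $2$ and $-2p$ make the argument clean), and then transporting the conclusion to $\P^1$ through the determinant-of-Higgs-field computation in Proposition~\ref{double zero} rather than through the maximal destabilizer of the pushforward.
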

	 \begin{proof}
For $[(F(P)=\sO_C(P)\oplus \sO_C(-P),\eta_{2\sigma(P)})] \in \Sigma_{\mathrm{Higgs}}(k)$, the proof of Corollary \ref{max destabilizer} tells us that $\pi_{\Higgs}([(F(P),\eta_{2\sigma(P)})] )$ is represented by
$(E,\theta_{x})$ with $x=\pi(P)$. Put $\varphi_{\lambda,p}([E,\theta_x])=[(E,\theta_{x'})]$. Write 
$$
(F(P')=\sO_C(P')\oplus \sO_C(-P'),\eta'):=\Gr\circ C^{-1}(F(P),\eta_{2\sigma(P)}).
$$
By Corollary \ref{flow morphsim preserving Sigma}, $\eta'=\eta_{2\sigma(P')}$. Now reading off the proof of Proposition \ref{double zero}, we actually obtain
$$
2\pi^*x'=2R'=2\sigma(P').
$$
Therefore $\pi^*x'=\sigma(P')$, and hence $x'=\pi(P')$. Finally, we get
$$
\pi_{\Higgs}([(F(P'),\eta_{2\sigma(P')})])=[(E,\theta_{x'})],
$$
as desired.
\end{proof}	
 
\begin{lemma}\label{arithmetic lemma}
Let $k$ be an algebraically closed field of odd characteristic $p$, and $C$ an elliptic curve over $k$. Let $\psi:C \rightarrow C $ be an isogeny satisfying:
		\begin{itemize}
			\item [(i)] $ \psi $ is inseparable of degree $ p^{2} $;
			\item [(ii)] $ \psi $ fixes $ 2 $-torsion points;
			\item [(iii)] $ \psi $ sends $ p $-torsion points to the origin.
		\end{itemize}
		Then $ \psi$ is the multiplication map by $p$ up to sign.
	\end{lemma}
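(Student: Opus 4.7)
The plan is to factor $\psi$ as $\beta\circ[p]$ with $\beta\in\Aut(C)$, and then use the $2$-torsion condition to pin $\beta$ down to $\pm 1$. Since $O\in C[2]$, condition (ii) forces $\psi(O)=O$; by the rigidity lemma for abelian varieties, $\psi$ is then a group homomorphism, so $\psi\in\End(C)$.

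Next I want to show $\ker\psi=C[p]$ as finite group schemes, which allows me to write $\psi=\beta\circ[p]$ for some $\beta\in\End(C)$ of degree one, hence an element of $\Aut(C)$. Let $d_s\mid p^2$ denote the separable degree of $\psi$. Since $\psi$ is inseparable, $d_s\in\{1,p\}$. In the ordinary case, $C$ has a unique \'etale subgroup scheme of order $p$ (the \'etale part of $C[p]$) and a unique connected subgroup scheme of order $p$ (namely $\mu_p$, the kernel of Frobenius, via the height-one formal group). If $d_s=1$, $\psi$ would factor as an isomorphism composed with $F^2$, whence $\ker\psi$ would be connected, contradicting condition (iii), which asserts that the \'etale points $C[p](k)\cong\Z/p$ lie in $\ker\psi(k)$. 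So $d_s=p$, and by uniqueness of the order-$p$ subgroup schemes of each type we get $\ker\psi=C[p]$ scheme-theoretically. In the supersingular case, $C$ admits no \'etale $p$-torsion, so necessarily $d_s=1$ and $\psi$ is purely inseparable of degree $p^2$; it factors as $\psi=\beta'\circ F^2$ for a $k$-isomorphism $\beta'$, and since $F^2=[p]$ on a supersingular elliptic curve (under the canonical identification $C^{(p^2)}\cong C$), we again obtain $\psi=\beta\circ[p]$. A degree count gives $\deg\beta=1$, so $\beta\in\Aut(C)$.

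Now for $P\in C[2]$, condition (ii) gives $\psi(P)=P$, and $[p]P=P$ because $p$ is odd, so $\beta$ fixes $C[2]$ pointwise. To finish, I classify automorphisms of $C$ fixing $C[2]$: for generic $j$-invariant $\Aut(C)=\{\pm 1\}$ and both elements fix $C[2]$ (since $-P=P$ on $2$-torsion); for $j=1728$ the order-$4$ automorphism $(x,y)\mapsto(-x,\sqrt{-1}\,y)$ of $y^2=x^3-x$ swaps the two roots $(\pm 1,0)$; for $j=0$ the order-$6$ automorphism $(x,y)\mapsto(\zeta_3 x,-y)$ of $y^2=x^3+1$ cyclically permutes the three nontrivial $2$-torsion points $(-1,0),(-\zeta_3,0),(-\zeta_3^2,0)$. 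Hence in every case only $\pm 1$ fix $C[2]$ pointwise, giving $\beta=\pm 1$ and $\psi=\pm[p]$.

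The most delicate step is the factorization in the supersingular case: since $C[p](k)=\{O\}$, condition (iii) carries no information on $k$-points there, so the argument must go through the structure theory of inseparable isogenies (separable degree $=$ order of the \'etale part of the kernel) together with the identity $[p]=F^2$ special to the supersingular locus, rather than through condition (iii) directly. The classification of $\Aut(C)$ fixing $C[2]$ in the two special $j$-invariants is a hands-on check but is otherwise routine.
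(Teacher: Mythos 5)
Your proof is correct, and in the supersingular case it takes a genuinely different (and arguably cleaner) route than the paper. The paper handles the supersingular case via the quaternion algebra $\sD$ ramified at $p$ and $\infty$: it sets $u=\psi/p\in\sD$, shows $u$ lies in the maximal order $\sO=\End_k(C)$ by a place-by-place check (using that $u\bar u=1$ and that $\sO_p$ is the unique maximal order of $\sD_p$), and concludes $u\in\sO^\times=\Aut(C)$. You instead observe that for supersingular $C$ every $p$-power-order subgroup scheme is connected, so $\psi$ and $[p]$ are both purely inseparable of degree $p^2$; both therefore factor through $F^2\colon C\to C^{(p^2)}$, and since $j(C)\in\F_{p^2}$ guarantees $C^{(p^2)}\cong C$, they differ by an automorphism of $C$. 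This avoids the arithmetic of quaternion orders entirely. (One small imprecision: there is no genuinely \emph{canonical} identification $C^{(p^2)}\cong C$, but none is needed---any two purely inseparable isogenies $C\to C$ of the same degree differ by an automorphism because both factor through the same Frobenius twist.) Your ordinary case is essentially the paper's argument (identify $\ker\psi=C[p]$ scheme-theoretically using the separable degree and uniqueness of the \'etale and connected order-$p$ subgroups, then factor $\psi=\beta\circ[p]$). For the final step the paper merely asserts that only $\pm1$ fix $C[2]$ pointwise; your hands-on check by $j$-invariant is fine for $j\neq 0,1728$ and in characteristic $\geq 5$, but be aware it does not quite cover the supersingular curve in characteristic $3$, where $j=0=1728$ and $|\Aut(C)|=12$. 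A uniform way to close that gap is the trace computation: an automorphism $u$ satisfies $u\bar u=1$ and $\tr(u)\in\{-2,-1,0,1,2\}$, so $\deg(u-1)=2-\tr(u)\in\{0,1,2,3,4\}$; for $C[2]\subset\ker(u-1)$ one needs $\deg(u-1)\geq 4$ or $u=1$, forcing $\tr(u)=\pm 2$, i.e.\ $u=\pm 1$. Finally, your opening remark about rigidity is redundant, since an isogeny is by definition already a homomorphism.
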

	
 \begin{proof}
Let $[p]:C\to C$ be the multiplication by $p$ map. It is purely inseparable when $C$ is supersingular. In this case, we let $\sD$ be the quaternion algebra over $\Q$ ramified at $ p,\infty $. Then $\mathrm{End}_k(C) = \OO $ is a maximal order of $\mathcal{D}$. Let $u=\psi/p \in \mathcal{D} $. Claim that $u \in \OO^{\times}$. It suffices to show $u \in \OO $. This is because 
$$ 
\overline{u}=\mathrm{tr}(u)-u \in \OO, \quad u\cdot \overline{u}=\frac{\psi\cdot \overline{\psi}}{p^{2}}=1. 
$$ 
Let $ \OO_{l}=\OO \otimes_{\mathbb{Z}} \mathbb{Z}_{l} $, where $ l $ is a prime number. It is sufficient to show $ u \in \OO_{l} $ for all $ l $. For prime $ l \neq p $, then $ p \in \OO_{l}^{\times} $, hence $ u=\psi/p $ lies in $ \OO_{l} $ trivially. For $ l=p $, since $ \mathcal{D} $ is ramified at $p$, it follows that $\OO_{p}$ is the unique maximal order in $ \mathcal{D}_{p} $, which is characterized by 
$$ 
\OO_{p}=\left\lbrace x \in \mathcal{D}_{p}^{\times}| x \cdot \overline{x} \in \mathbb{Z}_{p}\right\rbrace \cup \left\lbrace 0 \right\rbrace.
$$ 
But since $ u\cdot \overline{u}=1$, we infer that $ u \in \OO_{p} $. This proves our claim. So we obtain that $\psi=u\circ [p]$ for some automorphism $u$. When $C$ is ordinary, $[p]=F\circ V=V\circ F$, where $F$ (the relative Frobenius) is inseparable of degree $p$ and $V$ (the Verschiebung) is separable of degree $p$. By condition (i), $\psi$ factors as  $\psi=F\circ \bar \psi$ for isogeny $\bar \psi$. Because of condition (iii), $\psi$ cannot be inseparable. By the same condition, there exists an automorphism $u$ of $C$ such that $\bar \psi=u\circ \bar \psi$ (see \cite[Theorem 4, \S7]{Mum70}). Composing with $F$ on both sides, we also obtain $\psi=u\circ [p]$ for some automorphism $u$. Since $[p]$ fixes all two torsion points, condition (ii) on $\psi$ implies that $u$ fixes all two torsion points too. Now as $p\geq 3$, one checks immediately that the only automorphism fixing all two torsion points is $\pm 1$ ($-1$ refers to the canonical involution). The lemma is proved.
\end{proof}
We proceed to the proof of Theorem \ref{main result}
\begin{proof}
Clearly, the theorem follows if one shows $\phi_{\lambda,p}$ satisfies all three conditions of $\psi$ in Lemma \ref{arithmetic lemma}. By Proposition \ref{morphism for M}, it is inseparable. This can also be seen by the commutativity of the diagram \ref{commutitivity of flow operator} and the fact that $\varphi_{\lambda,p}$ is inseparable as proved by Sun-Yang-Zuo. In fact, by \cite[\S4.3]{SYZ}, one can even infer that $\deg(\phi_{\lambda,p})=p^2$. This is condition (i). Condition (ii) is proved in Proposition \ref{intersection points are fixed}. For condition (iii),  we only have to check $ \phi_{\lambda,p}$ is not purely inseparable in the ordinary case. But then, it is equivalent to show $\varphi_{\lambda,p}$ of Sun-Yang-Zuo is not purely inseparable in this case. We shall prove it in the lemma below. Therefore, we have completed the proof of Theorem \ref{main result}. 
\end{proof}
	 
In \cite[page 42]{SYZ}, $\varphi_{\lambda,p}$ has an explicit formula in affine coordinate:
\begin{equation*}
		\varphi_{\lambda,p}(x) = \frac{x^{p}}{\lambda^{p-1}}\cdot (\frac{f_{\lambda}(x^{p})}{g_{\lambda}(x^{p})})^2 ,
\end{equation*}
where $f_{\lambda}$ is a monic polynomial of degree $\frac{p-1}{2}$, $g_{\lambda}$ is of degree $\frac{p-1}{2}$ too but with leading coefficient $\det A(\lambda)$ given by
\begin{equation*}
			A(\lambda)=\begin{bmatrix}
				\frac{\lambda-{\lambda}^{p}}{1} & \frac{{\lambda}^{2}-{\lambda}^{p}}{2} & \cdots & \frac{{\lambda}^{\frac{p-1}{2}}-{\lambda}^{p}}{\frac{p-1}{2}} \\
				\frac{{\lambda}^{2}-{\lambda}^{p}}{2} & \frac{{\lambda}^{3}-{\lambda}^{p}}{3} & \ddots & \vdots\\
				\vdots & \ddots & & \\
				\frac{{\lambda}^{\frac{p-1}{2}}-{\lambda}^{p}}{\frac{p-1}{2}} & \cdots & \frac{{\lambda}^{p-3}-{\lambda}^{p}}{p-3} & \frac{{\lambda}^{p-2}-{\lambda}^{p}}{p-2}
			\end{bmatrix}_{\frac{p-1}{2} \times \frac{p-1}{2}}
		\end{equation*}
\begin{lemma}
Suppose $C_{\lambda}$ is ordinary. Then $\varphi_{\lambda,p}$ is not purely inseparable. 
\end{lemma}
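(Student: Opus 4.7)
The plan is a direct pole-order calculation on the explicit rational function for $\varphi_{\lambda,p}$. Since $\varphi_{\lambda,p}\colon \Pl_k\to \Pl_k$ has total degree $p^2$, it is purely inseparable iff $k(\varphi_{\lambda,p}(x))=k(x^{p^2})$ inside $k(x)$, which is equivalent to $\varphi_{\lambda,p}(x)=h(x^{p^2})$ for some M\"obius $h\in k(u)$. From the formula one has $\varphi_{\lambda,p}(x)=\psi(x^{p})$ with
\[
\psi(y)=\frac{y\,f_{\lambda}(y)^{2}}{\lambda^{p-1}\,g_{\lambda}(y)^{2}}\in k(y).
\]
As $k$ is perfect, $k(x^{p})^{p}=k(x^{p^{2}})$, so the reformulation to check is $\psi(y)\in k(y)^{p}$. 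For rational functions on $\Pl_k$, membership in $k(y)^{p}$ is equivalent to every order $\mathrm{ord}_{P}\psi$ being divisible by $p$.

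I would then compute $\mathrm{ord}_{\infty}\psi$ and use the ordinary hypothesis to obtain an immediate contradiction. Writing $m=(p-1)/2$, the numerator $y\,f_{\lambda}(y)^{2}$ is a polynomial of degree $1+2m=p$, contributing $-p$ at infinity. This is precisely where ordinariness enters: by hypothesis $\det A(\lambda)\neq 0$, so $g_{\lambda}$ has degree exactly $m$, whence $g_{\lambda}(y)^{2}$ has degree $2m=p-1$, contributing $-(p-1)$. Therefore $\mathrm{ord}_{\infty}\psi=-p+(p-1)=-1$. Since $p\geq 3$, $p\nmid 1$, contradicting $\psi\in k(y)^{p}$. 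Hence $\varphi_{\lambda,p}$ cannot be purely inseparable.

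There is essentially no obstacle beyond setting up the equivalence \emph{pure inseparability of $\varphi_{\lambda,p}$} $\iff$ \emph{all orders of $\psi$ divisible by $p$}; the ordinary hypothesis then kills the conclusion through the single leading coefficient $\det A(\lambda)$. As a sanity check, in the supersingular case $\det A(\lambda)=0$ the degree of $g_{\lambda}$ drops strictly below $m$, the order $\mathrm{ord}_{\infty}\psi$ shifts to a value divisible by $p$, and the analogous obstruction disappears, consistent with $[p]$ being purely inseparable on a supersingular $C_{\lambda}$ as used in Lemma \ref{arithmetic lemma}.
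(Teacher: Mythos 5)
Your reduction is correct, and in fact it supplies the step the paper leaves implicit. Writing $\varphi_{\lambda,p}=\psi\circ F$ with $F(x)=x^p$ and $\psi(y)=y f_\lambda(y)^2/(\lambda^{p-1}g_\lambda(y)^2)$, pure inseparability of $\varphi_{\lambda,p}$ is indeed equivalent to $\psi\in k(y^p)=k(y)^p$ (as $k$ is algebraically closed, hence perfect, and $\deg\psi=p$), which in turn is equivalent to all orders of $\psi$ being divisible by $p$. And $\mathrm{ord}_\infty\psi=\deg(\text{denom})-\deg(\text{num})$, so once $\deg g_\lambda=(p-1)/2$ you get $\mathrm{ord}_\infty\psi=(p-1)-p=-1$, which is not divisible by $p\geq 3$. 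This is a clean and correct explication of the sentence ``The lemma follows'' in the paper.

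The gap is the line ``by hypothesis $\det A(\lambda)\neq 0$.'' That is \emph{not} the hypothesis. The hypothesis is that $C_\lambda$ is ordinary, and the implication $C_\lambda$ ordinary $\Rightarrow \det A(\lambda)\neq 0$ is precisely the arithmetic content of the lemma — it is what almost the entire proof in the paper is devoted to. Concretely, the paper establishes the closed formula
\[
\det A(\lambda)=c\cdot\lambda^{(\frac{p-1}{2})^2}\cdot(1-\lambda)^{(\frac{p-1}{2})^2}\cdot H_p(\lambda)
\]
with $c\neq 0$ and $H_p$ the Hasse polynomial (whose nonvanishing characterizes ordinariness). The proof of this formula is not a triviality: it first uses the supersingular case — where $\phi_{\lambda,p}$ is purely inseparable because the $p$-divisible group is connected, giving $\det A(\lambda_0)=0$ via the contrapositive of your pole-order reduction — to deduce $H_p\mid\det A$; it then bounds $\deg\det A\leq p(p-1)/2$; and it computes the vanishing orders at $\lambda=0$ and $\lambda=1$ (the latter via the rank-one factorization of $B'(t)$ and a Cauchy determinant). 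Only then do the degrees match up and force the formula. Your ``sanity check'' at the end, invoking $\det A(\lambda)=0$ in the supersingular case, is exactly this nontrivial input being taken for granted. As written, the argument reduces to ``if $\det A(\lambda)\neq 0$ then not purely inseparable; and $\det A(\lambda)\neq 0$ by hypothesis,'' which does not connect to the actual hypothesis of ordinariness and therefore does not prove the lemma.
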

\begin{proof}
Actually, we shall establish the following formula:
\begin{equation}\label{formula for det}
			\det A(\lambda)=c \cdot \lambda^{(\frac{p-1}{2})^{2}}  \cdot (1-\lambda)^{(\frac{p-1}{2})^{2}} \cdot H_{p}(\lambda),
		\end{equation}
where $H_{p}(\lambda)$ is the Hasse polynomial, and $ c $ is a non-zero constant. Since $C_{\lambda}$ is ordinary, $H_{p}(\lambda)\neq 0$. The lemma follows. 
 
In the supersingular case, $\phi_{\lambda,p}$ is purely inseparable as $[p]$ is purely inseparable. It implies that $H_{p}(\lambda)$ divides $\det A(\lambda)$. We continue to calculate the vanishing order of $\det A(\lambda)$ at $\lambda=0 $ and $\lambda=1$. 
		
Looking at a general term in the Laplace expansion of the determinant, one observes quickly that
$$
\deg(\det A(\lambda))\leq \frac{p(p-1)}{2},\quad \lambda^{\frac{(p-1)^2}{4}}|\det A(\lambda).
$$
Make a substitution $\lambda=t+1$. Notice that ($1\leq n\leq \frac{p-1}{2}$)
\begin{equation*}
			\frac{\lambda^{n}-\lambda^{p}}{n}=\frac{1-(1+t)^{n}}{n}+\frac{t^{p}}{n}.
\end{equation*}
In order to determine the vanishing order of $\det A(t+1)$ at $t=0$, we may ignore the higher order term $\frac{t^{p}}{n}$ in each entry. Claim: the determinant of the following matrix
		\begin{equation*}
			B(t)=\begin{bmatrix}
				\frac{(t+1)-1}{1} & \frac{(t+1)^{2}-1}{2} & \cdots & \frac{(t+1)^{\frac{p-1}{2}}-1}{\frac{p-1}{2}} \\
				\frac{(t+1)^{2}-1}{2} & \frac{(t+1)^{3}-1}{3} & \ddots & \vdots\\
				\vdots & \ddots & & \\
				\frac{(t+1)^{\frac{p-1}{2}}-1}{\frac{p-1}{2}} & \cdots & \frac{(t+1)^{p-3}-1}{p-3} & \frac{(t+1)^{p-2}-1}{p-2}
			\end{bmatrix}_{\frac{p-1}{2} \times \frac{p-1}{2}}
		\end{equation*}
		equals $ c \cdot t^{\frac{(p-1)^{2}}{4}} $ for some non-zero constant $c$. Here is a trick: The derivative $B'(t)$ equals 
	\begin{equation*}
			\begin{split}
				B'(t) &= \begin{bmatrix}
					1 & (t+1)^{1} & \cdots & (t+1)^{\frac{p-3}{2}} \\
					(t+1)^{1} & (t+1)^{2} & \ddots & \vdots\\
					\vdots & \ddots & & \\
					(t+1)^{\frac{p-3}{2}} & \cdots & (t+1)^{p-4} & (t+1)^{p-3}
				\end{bmatrix}_{\frac{p-1}{2} \times \frac{p-1}{2}} \\ & = \begin{bmatrix} 1 \\ (t+1)^{1} \\ \vdots \\(t+1)^{\frac{p-3}{2}} \end{bmatrix}\cdot \begin{bmatrix}
					1 & (t+1)^{1} & \cdots & (t+1)^{\frac{p-3}{2}}
				\end{bmatrix}
			\end{split}
		\end{equation*}
		Let 
		\begin{equation*}
			[1 \; (t+1) \; \cdots \; (t+1)^{\frac{p-3}{2}}]=[1 \; t \; \cdots \; t^{\frac{p-3}{2}}] \cdot P
		\end{equation*}
 for some $P\in \Gl(k)$. Then 
		\begin{equation*}
			B'(t)=P^{t} \cdot \begin{bmatrix}
				1 & t^{1} & \cdots & t^{\frac{p-3}{2}} \\
				t^{1} & t^{2} & \ddots & \vdots\\
				\vdots & \ddots & & \\
				t^{\frac{p-3}{2}} & \cdots & t^{p-4} & t^{p-3}
			\end{bmatrix}_{\frac{p-1}{2} \times \frac{p-1}{2}}	
			\cdot P
		\end{equation*}
Taking the formal integration of both sides, we obtain the equality: 
		\begin{equation*}
			B(t)=P^{t} \cdot \begin{bmatrix}
				t & \frac{t^{2}}{2} & \cdots & \frac{t^{\frac{p-1}{2}}}{\frac{p-1}{2}} \\
				\frac{t^{2}}{2} & \frac{t^{3}}{3} & \ddots & \vdots\\
				\vdots & \ddots & & \\
				\frac{t^{\frac{p-1}{2}}}{\frac{p-1}{2}} & \cdots & \frac{t^{p-3}}{p-3} & \frac{t^{p-2}}{p-2}
			\end{bmatrix}_{\frac{p-1}{2} \times \frac{p-1}{2}}	
			\cdot P
		\end{equation*}
Then it follows that $ \det B(t) =(\det P)^{2} \cdot c \cdot t^{(\frac{(p-1)^2}{4})}$. Here $c$ equals the determinant of the following Cauchy matrix 
 \begin{equation*}
   \begin{bmatrix}
    1 & \frac{1}{2} & \cdots & \frac{1}{(p-1)/2} \\
    \frac{1}{2} & \frac{1}{3} & \ddots & \vdots\\
    \vdots & \ddots & & \\
    \frac{1}{(p-1)/2} & \cdots & \frac{1}{p-3} & \frac{1}{p-2}
   \end{bmatrix}_{\frac{p-1}{2} \times \frac{p-1}{2}}.
  \end{equation*}
It is non-zero. So the claim is proved. Summarizing the discussions, we see that the term
$$
\lambda^{(\frac{p-1}{2})^{2}}  \cdot (1-\lambda)^{(\frac{p-1}{2})^{2}} \cdot H_{p}(\lambda)
$$
divides $\det A(\lambda)$. Since $\deg H_{p}(\lambda)=\frac{p-1}{2}$ as a matter of fact, the total degree of the term reaches already the upper bound of the degree of $\det A(\lambda)$. So they are equal up to a non-zero constant. The formula \ref{formula for det} is proved. 
\end{proof}

\section{Classification, counting and distribution}
In the final section, we work exclusively over $\mathbb C$. Let $C$ be a smooth projective curve over $\mathbb{C}$, equipped with a reduced effective divisor $D=\sum_{1\leq i\leq l}D_i$. Fix the following numerical data:
\begin{itemize}
    \item [(i)] Positive integer $r$;
    \item [(ii)] For each $1\leq i\leq l$ (when $D\neq \emptyset$), a tuple $\Vec{w}_i=(w_1,\cdots,w_{m_i})$ of rational numbers satisfying 
    $$
    0\leq w_1<\cdots<w_{m_i}<1,
    $$ 
    and a tuple $\Vec{n}_i=(n_1,\cdots,n_{m_i})$ of positive integers satisfying $$\sum_{1\leq j\leq m_i}n_j=r.$$
\end{itemize}
For $(r,\underline{w}=(\vec{w}_1,\cdots,\vec{w}_l),\underline{n}=(\vec{n}_1,\cdots,\vec{n}_l))$ when $D\neq \emptyset$ (and simply $r$ otherwise), let   $M_{r,\underline{w},\underline{n}}$ (simply $M_r$ when $D=\emptyset$) be the moduli scheme over $\mathbb C$, corepresenting the moduli functor of rank $r$ semistable graded parabolic Higgs bundles over $(C,D)$, whose quasi-parabolic structure at $D_i$ is given by $(\vec{w}_i,\vec{n}_i)$ and whose determinant is of torsion. Take a spread-out $(\sC,\sD)/S$ of the pair $(C,D)$. One obtains a corresponding spread-out $\sM_{r,\underline{w},\underline{n}}/S$ (simply $\sM_r$) of the moduli space.  Then there exists a proper closed subset $Z\subset S$, such that for any geometric closed point $s\in S-Z$, and any $W_2(k(s))$-lifting $\tilde s: \Spec\ W_2(k(s))\to S$, there exists a positive integer $\Lambda$, independent of $s,\tilde s$ and the choice of a spread-out for $(C,D)$, such that the $\Lambda$-iteration of the flow operator $\Gr_{Fil_S}\circ C_{s\subset \tilde s}^{-1}$ in \S2 induces a self map on $\sM_{r,\underline{w},\underline{n}}(k(s))$. A point in $M_{r,\underline{w},\underline{n}}(\mathbb C)$ is said to be \emph{periodic}, if it is represented by a periodic parabolic Higgs bundle $(E,\theta)$ over $(C,D)$ (see Definition \ref{globally periodic}). For such an $(E,\theta)$, the set of integers $\{f\}$ in Definition \ref{globally periodic} is generated by its minimal element $f(x)$. We call $f(x)$ the \emph{period} of $x$. It is an important invariant of a periodic parabolic Higgs bundle. Let $\sP{r,\underline{w},\underline{n}}\subset M_{r,\underline{w},\underline{n}}(\mathbb C)$ (and simply $\sP_r\subset M_r$ when $D=\emptyset$) be the set of all periodic points. To study them, one needs to understand the action of $(\Gr_{Fil_S}\circ C_{s\subset \tilde s}^{-1})^{\Lambda}$ on $\sM_{r,\underline{w},\underline{n}}(k(s))$ by default. The main question concerning us is the \emph{classification, counting, and distribution} of periodic points in the moduli space. It has a clear geometric meaning because of the Higgs periodicity theorem (Theorem \ref{Higgs periodicity theorem}).
	 
In \cite[Proposition 2.5]{kshiggs}, it is proved that a rank one Higgs bundle $(L,\theta)$ over $C$ is periodic if and only if $\theta=0$ and $L$ is of torsion \footnote{It is not difficult to extend this classification result to the parabolic case, via the generalized Biswas-Iyer-Simpson correspondence (see \cite[\S2]{ksdR}).}. Therefore, $\sP_{1}$ is exactly the set of torsion points in $\mathrm{Pic}^0(C)$. There are two distinguished properties about this subset: i). For each positive integer $N$, there are only finitely many torsion points whose order $\leq N$; ii). The set of torsion points as a whole is equi-distributed in $\mathrm{Pic}^0(C)$. However for a higher rank moduli, the notion of order is not appropriate any more. Note that in i), one may replace a bounded order with a bounded period. And the notion of period extends. The aim of this section is to investigate this problem for a moduli space of rank two, degree zero (semi)stable graded parabolic Higgs bundles over $(\P^1,D=0+1+\lambda+\infty)$ with \emph{equal parabolic weight}. To be precise, a graded parabolic Higgs bundles $(E,\theta)$ over $(\P^1,D)$ lies in such a moduli, if it satisfies the following conditions:
	\begin{itemize}
		\item [\romannumeral1)] $\rank \, E=2,\quad \deg \, E=0$;
		\item [\romannumeral2)] parabolic weights of $E$ along each point of $D$ are equal;
		\item [\romannumeral3)] $(E,\theta)$ is (semi)stable.
	\end{itemize} 

\subsection{Classification}
We shall not consider the strictly semistable case, since up to $S$-equivalence, it is just a direct sum of two parabolic lines bundles of degree zero. In the following, we assume taciturnly that $(E,\theta)$ is stable. 

Rewrite $D=\sum_iD_i$. Because of condition (ii), we may write $E$ as
	\begin{equation*}
	E=(\OO_{\P^1}(a) \oplus \OO_{\P^1}(b))(\sum_i\alpha_iD_i)
	\end{equation*}
	with $ a \geq b $ and $\alpha_i\in [0,1)\cap \Q$. 
	\begin{lemma}\label{First classification}
		Notation as above. If $(E,\theta)$ is periodic, then the followings hold:
		\begin{itemize}
			\item [\romannumeral1)] $\theta^{1,0}:\OO_{\P^1}(a)(\sum_i\alpha_iD_i)\to \OO_{\P^1}(b)(\sum_i\alpha_iD_i)\otimes \Omega_{\P^1}(D)$ is non-zero;
			\item [\romannumeral2)] $a=b+2$ or $a=b+1$.
		\end{itemize}
	\end{lemma}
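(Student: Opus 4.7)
The plan is to combine two structural facts. First, a periodic parabolic Higgs bundle has $\pardeg(E) = 0$ (invoked in the paper via \cite[Corollary 4.9]{ksdR}). Second, in any graded rank-two Higgs bundle $(E = E^{1,0} \oplus E^{0,1}, \theta)$, the summand $E^{0,1}$ is automatically $\theta$-invariant, because $\theta$ lowers the grading and so vanishes on $E^{0,1}$. With $E_0 = \OO_{\P^1}(a) \oplus \OO_{\P^1}(b)$ and equal weight $\alpha_i$ at $D_i$, the equation $\pardeg(E) = a + b + 2\sum_i \alpha_i = 0$ gives $\sum_i \alpha_i = -(a+b)/2$, and hence
\begin{equation*}
\pardeg(E^{1,0}) = \tfrac{a-b}{2}, \qquad \pardeg(E^{0,1}) = \tfrac{b-a}{2}.
\end{equation*}

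Part (i) is then a one-line contradiction. If $\theta^{1,0}=0$, the graded condition forces $\theta = 0$, and $E^{1,0}$ becomes a sub-parabolic-Higgs-bundle of parabolic slope $(a-b)/2 \geq 0 = \pardeg(E)/2$, violating stability.

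For (ii), I would first read off the lower bound $a \geq b+1$ from stability applied to the always-$\theta$-invariant subobject $E^{0,1}$: the slope inequality $\pardeg(E^{0,1}) = (b-a)/2 < 0$ gives it immediately. For the upper bound, the non-zero morphism of parabolic bundles
\begin{equation*}
\theta^{1,0}: \OO_{\P^1}(a)(\textstyle\sum_i \alpha_i D_i) \to \OO_{\P^1}(b)(\textstyle\sum_i \alpha_i D_i) \otimes \Omega_{\P^1}(D)
\end{equation*}
furnished by (i) has identical parabolic weights on source and target, so the parabolic-strictness condition is automatic and it reduces to a non-zero ordinary morphism $\OO_{\P^1}(a) \to \OO_{\P^1}(b+2)$ (using $\Omega_{\P^1}(D) \cong \OO_{\P^1}(2)$). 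This forces $a \leq b+2$, and combined with the lower bound yields $a - b \in \{1, 2\}$.

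I do not anticipate any genuine obstacle: the lemma is a structural numerical constraint pinned down by stability plus the graded nature of $\theta$. The only mildly delicate point worth double-checking is the reduction of the parabolic morphism $\theta^{1,0}$ to an ordinary morphism of underlying bundles, which relies essentially on the equal-parabolic-weight hypothesis and which would fail, or at least become more intricate, in the general (non-equal-weight) setting.
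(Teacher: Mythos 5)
Your proof is correct and follows the same route as the paper's. The paper's proof is terse (it reads: by stability $\theta\neq 0$; as $a\geq b$ one identifies $E^{1,0}=\OO(a)(\cdots)$ and $E^{0,1}=\OO(b)(\cdots)$ with $a>b$; nonzero $\theta^{1,0}$ then forces $a\leq b+2$), but the underlying logic is exactly what you spell out: stability kills $\theta=0$ via the $\theta$-invariant summand, the same argument applied to $E^{0,1}$ gives the strict inequality $a>b$, and the degree count on $\theta^{1,0}:\OO(a)\to\OO(b+2)$ (after stripping the common parabolic twist) gives the upper bound. Your added observation that the equal-weight hypothesis is what lets $\theta^{1,0}$ collapse to an ordinary morphism of line bundles is accurate and worth keeping in mind, though the paper treats it as implicit.
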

	
	\begin{proof}
	By stability, $\theta$ cannot be zero. As $a\geq b$, one has
  $$
  E^{1,0}=\OO(a)(\sum_i\alpha_iD_i),\quad E^{0,1}=\OO(b)(\sum_i\alpha_iD_i).
  $$
 Moreover, $a>b$. By $\theta^{1,0}\neq 0$, it follows that $a\leq b+2$. Therefore, $a=b+2$ or $a=b+1$.
  \end{proof}
By (ii), the classification falls into a dichotomy: $a=b+2$, equivalently $\theta$ is maximal; $a=b+1$, equivalently $\theta$ has a unique zero. 

The simplest example in the former case is given by the uniformizing Higgs bundle of $(\P^1,D)$:
	\begin{equation*}
	\theta_{unif}: \sO_{\Pl}(1) \stackrel{\cong}{\longrightarrow} \sO_{\Pl}(-1)\otimes \Omega_{\Pl}(D).
	\end{equation*}
This case has been studied in \cite{LS}. The following result has been obtained therein. 
	\begin{proposition}[Corollary 1.4, Proposition 1.5 \cite{LS}]\label{LS result}
		The following statements hold:
		\begin{itemize}
			\item [(\romannumeral1)] Let $\lambda$ be one of 17 values in Beauville's list. Then $(E_{unif}=\sO_{\Pl}(1)\oplus \sO_{\Pl}(-1),\theta_{unif})$ is periodic.
			\item [(\romannumeral2)] If $(E_{unif},\theta_{unif})$ is periodic, then $\lambda\in \bar \Q$.
		\end{itemize}
	\end{proposition}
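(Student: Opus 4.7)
The plan is to treat the two parts separately, with (i) being essentially geometric and (ii) carrying the arithmetic content.

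For (i), I would appeal to Beauville's classification of semistable elliptic fibrations over $\P^1$ with exactly four singular fibers: there are six such families, all modular, and pulling them back to the Legendre pencil $y^2=x(x-1)(x-t)$ produces precisely seventeen values of $\lambda$ for which the discriminant locus equals $D=0+1+\lambda+\infty$. For each such $\lambda$, fix the associated family $f_\lambda\colon Y_\lambda\to\P^1$. The first parabolic Gau{\ss}-Manin bundle $R^1f_{\lambda*}(\Omega^\bullet_{Y_\lambda/\P^1}(\log))$ is then a rank-two parabolic de Rham bundle on $(\P^1,D)$, and a direct computation of its Hodge grading shows that the associated graded is, up to a rank-one parabolic twist, isomorphic to $(E_{\mathrm{unif}},\theta_{\mathrm{unif}})$. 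Hence the latter is motivic in the sense of the introduction, and Theorem \ref{Higgs periodicity theorem} immediately produces the required periodicity.

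For (ii), suppose $(E_{\mathrm{unif}},\theta_{\mathrm{unif}})$ is periodic at $\lambda$. I would spread $(\P^1_{\mathbb C},D,E_{\mathrm{unif}},\theta_{\mathrm{unif}})$ out over an integral $\Z$-algebra of finite type $R\subset\mathbb C$ containing $\lambda$; Definition \ref{globally periodic} then furnishes an integer $f$ and an identification $(\Gr\circ C^{-1})^f(\mathscr E_s,\Theta_s)\cong(\mathscr E_s,\Theta_s)$ at every closed geometric point $s\in\Spec R$ outside a proper closed subset. Because the Higgs field of the uniformizing Higgs bundle is maximal, the isomorphism class of $(\mathscr E_s,\Theta_s)$ is entirely determined by the reduction $\lambda_s\in k(s)$, so the flow operator restricts to a self-map of the one-parameter family indexed by $\lambda$. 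On each fiber above $\Spec\Z$ this self-map is given by a rational endomorphism of $\A^1$ defined over the prime field, and the condition that $\lambda_s$ be $f$-periodic for every sufficiently large $p$ amounts to an infinite system of polynomial equations in $\lambda$ with coefficients in $\Z$. Such a system can be satisfied by at most countably many complex numbers, forcing $\lambda\in\bar\Q$.

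The main obstacle is precisely the last step: one must exclude the possibility that the polynomial equations are identically satisfied on some positive-dimensional subvariety of $\A^1_{\mathbb C}$. I expect this to be handled in the same spirit as the proof of Theorem \ref{torsion theorem}, descending the flow operator on $(E_{\mathrm{unif}},\theta_{\mathrm{unif}})$ through the double cover $C_\lambda\to\P^1$ of \S3--\S5 to an isogeny of the elliptic curve $C_\lambda$ of degree $p^2$, and then applying a Masser-style specialization argument to rule out a uniform periodic point at a transcendental $\lambda$. Part (i) is essentially bookkeeping on Beauville's list; the genuine difficulty lies in (ii).
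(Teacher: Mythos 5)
This proposition is cited from Li--Sheng \cite{LS} (Corollary 1.4 and Proposition 1.5 there); the present paper gives no proof of it, so there is no internal argument to compare your sketch against. Your proposal is an attempt to reconstruct the \cite{LS} argument, and it is worth evaluating on those terms.

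Your sketch of (i) is in the right spirit: one identifies the six Beauville families of semistable elliptic surfaces with four singular fibers over $\P^1$, normalizes the four singular values to $\{0,1,\lambda,\infty\}$ by a M\"obius transformation (this is what produces the list of $17$ values of $\lambda$, up to the $S_3$-action on cross ratios, not a pullback to the Legendre pencil --- that phrasing is misleading but harmless), computes that the graded Kodaira--Spencer system of each family is, up to a rank-one parabolic twist, $(E_{\mathrm{unif}},\theta_{\mathrm{unif}})$, and then invokes Theorem \ref{Higgs periodicity theorem}. That is essentially how \cite{LS} proceeds.

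Your sketch of (ii), however, has a genuine conceptual error in the closing paragraph. You propose to descend the flow operator through the double cover $C_\lambda\to\P^1$ and read it off as an isogeny of $C_\lambda$ of degree $p^2$, invoking a Masser-style specialization. But that machinery belongs entirely to the $a=b+1$ case (Higgs field with a unique zero): there the moduli parameter being flowed is the zero of $\theta$, the curve $C_\lambda$ enters as the section $\Sigma_{\Higgs}$ of $M_1\to C_\lambda$ for a \emph{fixed} $\lambda$, and Pink--Masser is used to detect torsion of a point on a fixed abelian variety. The uniformizing Higgs bundle is the $a=b+2$ case: the Higgs field is maximal, has no zero at all, and the isomorphism class is determined by $\lambda$ alone. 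The flow operator therefore acts directly on the $\lambda$-line (the moduli of four-punctured spheres), not on a fiber over a fixed $\lambda$, so there is no elliptic curve $C_\lambda$ through which to descend and nothing to which the torsion-detection argument would apply. The correct way to close (ii), as in \cite{LS}, is more elementary: the mod-$p$ flow operator on the $\lambda$-line is a finite self-map of degree strictly larger than one (degree $p^2$), so each iterate has only finitely many fixed points; spreading out a hypothetically transcendental periodic $\lambda$ would make infinitely many specializations of $\lambda$ satisfy these polynomial fixed-point equations, which by a genericity / Hilbert-irreducibility type argument forces the equations to be identically satisfied, contradicting the degree being greater than one. The conclusion is only $\lambda\in\bar\Q$, not torsion, and no Pink--Masser input is needed or available here.
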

It is easily seen that in former case, we may write
	\begin{equation*}
	(E,\theta) \cong (E_{\mathrm{unif}},\theta_{\mathrm{unif}}) \otimes (\sO_{\Pl}(n)(\sum_i\alpha_iD_i),0),
	\end{equation*}
	where $ (E_{\mathrm{unif}},\theta_{\mathrm{unif}}) $ is equipped with the trivial parabolic structure and $\sO_{\Pl}(n)(\sum_i\alpha_iD_i)$ is a parabolic line bundle of degree zero. A simple computation shows that $ n \in \left\lbrace 0,1,2,3\right\rbrace $. Using Proposition \ref{LS result}, we obtain the following classification result in the case $ a=b+2 $.
	\begin{proposition}\label{case 1}
		Notations as above. Suppose $\lambda$ in Beauville's list. Then
		\begin{equation*}
		(E_{\mathrm{unif}},\theta_{\mathrm{unif}})\otimes (\sO_{\Pl}(-n)(\sum_i\alpha_iD_i),0),
		\end{equation*}
with $\sum\alpha_i=n$ is always periodic. Conversely, suppose there exists some $(E,\theta)$ satisfying $a=b+2$ is periodic. Then it must be of the above form, and $\lambda$ is an algebraic number \footnote{By \cite[Conjecture 1.6]{LS}, $\lambda$ must be one of 17 values in Beauville's list}. 
	\end{proposition}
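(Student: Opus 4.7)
The plan is to reduce Proposition \ref{case 1} to Proposition \ref{LS result} by establishing the auxiliary fact that periodicity is preserved (and reflected) under tensoring with a rank-one periodic parabolic Higgs bundle. Write the rank-one factor as $(L,0)=(\sO_{\P^1}(-n)(\sum_i\alpha_iD_i),0)$. Under the hypothesis $\sum_i\alpha_i=n$ with $\alpha_i\in\Q$, the parabolic degree of $L$ is zero and its weights are rational, so a suitable tensor power is trivial as a parabolic line bundle; thus $L$ is torsion, and by Proposition \ref{PHC for rank one} both $(L,0)$ and $(L^{-1},0)$ are periodic.

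I would first establish the auxiliary fact: if $(V,\vartheta)$ is a globally periodic parabolic Higgs bundle over $(C,D)/\C$ and $(M,0)$ is a periodic parabolic Higgs line bundle, then $(V,\vartheta)\otimes(M,0)$ is periodic. The argument rests on two inputs: (i) the parabolic inverse Cartier transform is a tensor functor (\cite[Lemma 2.11]{ksdR}), so
\[
C^{-1}_{\Par}\bigl((V,\vartheta)\otimes(M,0)\bigr)\cong C^{-1}_{\Par}(V,\vartheta)\otimes C^{-1}_{\Par}(M,0);
\]
and (ii) since $C^{-1}_{\Par}(M,0)$ has rank one, tensoring with it preserves the Simpson filtration up to a rank-one twist, exactly as in the proof of Lemma \ref{tensor product lemma}, so the grading functor commutes with this tensor. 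Iterating, $(\Gr\circ C^{-1}_{\Par})^f$ respects the tensor decomposition, and taking $f$ to be a common period of the two factors establishes periodicity of the tensor product. To match Definition \ref{globally periodic}, one picks a common spreading-out and the union of the two exceptional closed subsets to realise uniform periodicity for all $W_2$-liftings.

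Given the auxiliary fact, both directions are immediate. For the forward direction: by Proposition \ref{LS result}(i), when $\lambda$ lies in Beauville's list, $(E_{\mathrm{unif}},\theta_{\mathrm{unif}})$ is periodic; combined with periodicity of $(L,0)$ above, the tensor product $(E,\theta)$ is periodic. For the converse: by the paragraph preceding the proposition, any $(E,\theta)$ with $a=b+2$ factors as $(E,\theta)\cong(E_{\mathrm{unif}},\theta_{\mathrm{unif}})\otimes(L,0)$, and the numerical constraint $\sum_i\alpha_i=n$ is forced by parabolic-degree-zero. Hence
\[
(E_{\mathrm{unif}},\theta_{\mathrm{unif}})\cong(E,\theta)\otimes(L^{-1},0)
\]
is periodic by the auxiliary fact applied to the periodic factors $(E,\theta)$ and $(L^{-1},0)$. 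Proposition \ref{LS result}(ii) then forces $\lambda\in\bar\Q$.

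The only real obstacle I foresee is the bookkeeping needed to upgrade the tensor argument from period-wise equality to the global periodicity of Definition \ref{globally periodic}, which quantifies over all $W_2(k(s))$-liftings at all geometric points of a density-one open of the spreading-out. This is routine: one chooses a spread-out of the pair $(V,\vartheta),(M,0)$ defined on a common base $S$ and lets $Z\subset S$ be the union of the exceptional loci coming from each factor, over which the tensor identity holds fibrewise and lift-wise by the exactness and tensor compatibility of the mod-$p$ inverse Cartier transform. All the conceptual content is packaged into Propositions \ref{LS result} and \ref{PHC for rank one}.
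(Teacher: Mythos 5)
Your proof is correct and follows essentially the same route as the paper: reduce to Proposition \ref{LS result} using the tensor factorization, together with the classification of periodic rank-one parabolic Higgs bundles from Proposition \ref{PHC for rank one}. The one thing you add that the paper leaves implicit is the explicit verification that the flow operator $\Gr\circ C^{-1}_{\Par}$ respects tensoring with a periodic rank-one factor — the paper's own proof simply reminds the reader that the rank-one twist factor is periodic iff it has parabolic degree zero and takes the compatibility for granted. Your argument for that compatibility (tensoriality of $C^{-1}_{\Par}$ via \cite[Lemma 2.11]{ksdR}, plus the observation that rank-one twists preserve the Simpson filtration as in Lemma \ref{tensor product lemma}, plus the common-period and common-spread-out bookkeeping) is exactly the right justification, and making it explicit is an improvement rather than a deviation.
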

	\begin{proof}
	The only thing to remind is that $(\sO_{\Pl}(n)(\sum_i\alpha_iD_i),0)$ is periodic if and only if it is of degree zero viz. $\sum\alpha_i=n$ (however it is not one-periodic in general by \cite[Proposition 2.18, Remark 5.9]{ksdR}).
	\end{proof}
	
	In the latter case, we consider the logarithmic Higgs bundle studied by Sun-Yang-Zuo: 
	\begin{equation*}
	\theta_x: \sO_{\Pl} \to \sO_{\Pl}(-1) \otimes \Omega_{\P^1}(D).
	\end{equation*}
	\begin{proposition}\label{case 2}
		Suppose $(E,\theta)$ satisfy $a=b+1$. Then it is periodic if and only if there is an isomorphism of graded parabolic Higgs bundles:
		\begin{equation}\label{classification for a=b+1}
		(E,\theta) \cong (\sO_{\Pl} \oplus \sO_{\Pl}(-1),\theta_{x}) \otimes (\sO_{\Pl}(-n)(\sum_i\alpha_iD_i),0),
		\end{equation}
		where $ x $ is the image of a torsion point in $C_{\lambda}$ under $ \pi:C_{\lambda}\to \P^1$, and $n\in \{0,1,2,3\}$ satisfying $\sum_i\alpha_i=\frac{1}{2}+n$.    
	\end{proposition}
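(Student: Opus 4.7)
The statement has two directions, both of which hinge on separating $(E,\theta)$ into a ``Sun--Yang--Zuo'' rank-two piece and a rank-one parabolic line bundle, plus the tensor-compatibility of the flow operator (Lemma \ref{tensor product lemma}).

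For the necessity direction, the plan is as follows. Since any periodic parabolic Higgs bundle has parabolic degree zero (\cite[Corollary 4.9]{ksdR}), the identity $2a - 1 + 2\sum_i \alpha_i = 0$ combined with $\alpha_i \in [0,1)$ forces $a \in \{0,-1,-2,-3\}$; writing $n = -a$ yields $\sum_i \alpha_i = \tfrac{1}{2} + n$. The isomorphism \eqref{tensor product} then factors
$$
(E,\theta) \cong (\sO_{\Pl} \oplus \sO_{\Pl}(-1),\theta_x) \otimes (\sO_{\Pl}(-n)(\textstyle\sum_i \alpha_i D_i),0),
$$
where $x \in \Pl - D$ is the unique zero of $\theta$. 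To show $x$ is the image of a torsion point of $C_\lambda$, I would replay the proof of the torsion theorem: by Proposition \ref{identification of self-maps}, periodicity of $(E,\theta)$ is equivalent to periodicity of $(E_0,\theta_x) := (\sO \oplus \sO(-1),\theta_x)$ viewed inside $M_{\frac{1}{2} P_0}$ for some $P_0 \in D$; lifting $x$ to $P \in C_\lambda$, the diagram \eqref{commutitivity of flow operator} together with Theorem \ref{main result} translates this to periodicity of $[F(P)]$ under $\phi_{\lambda,p} = \pm[p]$, and Pink's theorem plus Masser's specialization theorem then force $P$ to be torsion.

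For the sufficiency direction, given $x = \pi(P)$ with $P$ of torsion order $N$, I would rewrite the displayed factorization as
$$
(E,\theta) \cong (E_{\frac{1}{2} P_0},\theta_x) \otimes (L',0)
$$
by absorbing $(\sO_{\Pl}(\tfrac{1}{2} P_0),0)$ into the rank-two factor, where $L'$ has parabolic degree zero. Since $\Pic^0(\Pl)$ is trivial, every degree-zero parabolic line bundle on $\Pl$ is torsion, so by Proposition \ref{PHC for rank one} the factor $(L',0)$ is periodic. Meanwhile $(E_{\frac{1}{2} P_0},\theta_x)$ is periodic: by \eqref{commutitivity of flow operator} and Theorem \ref{main result}, $\varphi_{\lambda,p}^{f}([\theta_x]) = [\theta_{\pi([p^f]P)}]$, and taking $f = \varphi(N)$ forces $p^f \equiv 1 \pmod N$ for every prime $p$ coprime to $N$, hence $[p^f]P = P$ and $\varphi_{\lambda,p}^f$ fixes $[\theta_x]$ uniformly across a density-one set of primes. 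Taking the lcm of the two individual periods and invoking tensor-compatibility (Lemma \ref{tensor product lemma} together with the compatibility of $C^{-1}$ with tensor products) yields periodicity of the tensor product, that is, of $(E,\theta)$.

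The principal difficulty is the \emph{uniformity} of the period across the density-one set of primes: without the explicit identification $\phi_{\lambda,p} = \pm[p]$ from Lemma \ref{arithmetic lemma}, one would only have periodicity individually at each prime; it is precisely because the same $[p]$-map governs the flow on $\Sigma_{\Higgs}$ for \emph{every} $p$ that the arithmetic bound $\varphi(N)$ suffices to produce a single $f$ working simultaneously over almost all primes.
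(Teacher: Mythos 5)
Your proposal is correct and follows essentially the same route as the paper's proof: derive the tensor decomposition from the parabolic degree zero constraint, reduce periodicity of $(E,\theta)$ to Sun--Yang--Zuo periodicity of $(\sO_{\Pl}\oplus\sO_{\Pl}(-1),\theta_x)$ via Lemma \ref{tensor product lemma} and Proposition \ref{identification of self-maps}, and then invoke Theorem \ref{main result} plus the Pink--Masser machinery from the proof of Theorem \ref{torsion theorem}. Your treatment of sufficiency is a bit more explicit than the paper's (which simply defers to ``the proof of Theorem \ref{torsion theorem}''), in that you spell out the Euler-totient bound $p^{\varphi(N)}\equiv 1\bmod N$ yielding a uniform period across almost all primes, and separately check that the degree-zero parabolic line bundle factor is periodic via Proposition \ref{PHC for rank one} --- but these are elaborations of the same argument, not a different one. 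One small slip: you write that the unique zero $x$ of $\theta$ lies in $\Pl - D$, but nothing forces this (if $x\in D$ it is the image of a $2$-torsion point of $C_\lambda$, still consistent with the statement, and the argument goes through unchanged).
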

	\begin{proof}
		Equipping the last logarithmic Higgs bundle with the trivial parabolic structure, an $ (E,\theta) $ satisfying $a=b+1$ is written into a tensor product
		\begin{equation}\label{tensor structure of equal weight}
		(E,\theta) \cong (\sO_{\Pl} \oplus \sO_{\Pl}(-1),\theta_{x}) \otimes (\sO_{\Pl}(-n)(\sum_i\alpha_iD_i),0)
		\end{equation}  
		with $\pardeg\ \sO_{\Pl}(-n)(\sum_i\alpha_iD_i)=\frac{1}{2}$. Then by Lemma \ref{tensor product lemma}, $(E,\theta)$ is periodic if and only if $( \sO_{\Pl}\oplus \sO_{\Pl}(-1),\theta_{x})$ is periodic in the sense of Sun-Yang-Zuo. By Theorem \ref{main result} and the proof of Theorem \ref{torsion theorem}, the result follows.
		
	\end{proof}

	\subsection{Counting periodic points} 
The counting problem for the case in Proposition \ref{case 1} is not so interesting. We shall focus on the second case, which has a positive dimensional moduli. Let $M_{\alpha}$ be the moduli scheme introduced in Proposition \ref{identification of self-maps}, which is identical to the one for graded parabolic Higgs bundles in Proposition \ref{case 2}. Let us tackle first the simplest case $M_{\frac{1}{2}P}$. By Proposition \ref{identification of self-maps}, for $p\geq 3$, we have the following commutative diagram:

\begin{equation*} 
			\begin{tikzcd}
				M_{\mathrm{Higgs}}(\bar \F_p) \arrow[r,"\varphi_{\lambda,p}"] 
				\arrow[d,"\cong "'] & M_{\mathrm{Higgs}}(\bar \F_p)
				\arrow[d,"\cong"] \\
				M_{\frac{1}{2}P}(\bar \F_p) \arrow[r,"\varphi_{\frac{1}{2}P}"] & M_{\frac{1}{2}P}(\bar \F_p),
			\end{tikzcd}  
		\end{equation*}	
Thanks to the commutative diagram \ref{commutitivity of flow operator}, we may transform the problem to a counting of torsion points on $\Sigma_{\Higgs}$.

For each positive integer $N$, let $U_N$ be the unit group of $\Z/N\Z$. The universal exponent of $U_N$ is the smallest exponent $e$ such that the morphism $U_N\to U_N, \quad x\mapsto x^e$ has trivial image. We denote it by $\lambda(N)$. Write 
\begin{equation*}
N=p_0^{s_0}\cdot p_{1}^{s_{1}} \cdots p_{n}^{s_n} ,
\end{equation*} 
($p_0=2$) for the prime decomposition. Then
\begin{equation*}
	\lambda(N)= \mathrm{lcm} \left\lbrace 2^{\max \left\lbrace 0, s_0-2\right\rbrace }, (p_1-1)p_{1}^{s_{1}-1}, \cdots , (p_n-1)p_{n}^{s_{n}-1})\right\rbrace .
	\end{equation*}
Identifying $M_{\frac{1}{2}P}$ with $\P^1$ as above, Proposition \ref{case 2} tells us that its subset of periodic points is the image of the set of torsion points of $C_{\lambda}$ under the projection $\pi$. 
\begin{proposition}\label{torsion order to period}
 Let $x \in M_{\frac{1}{2}P}(\mathbb C)$ be the image of a strict $N$-torsion point in $C_{\lambda}$. Then its period $f(x)$ equals 
 \begin{equation*}
		\lambda'(N):= \begin{cases}
		\frac{\lambda(N)}{2},\quad & \text{if a nontrivial primitive root exists modulo}\  N, \\
		\lambda(N),\quad & \text{otherwise}.
		\end{cases} 
		\end{equation*}
 \end{proposition}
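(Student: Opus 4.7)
The plan is to translate the period of $x$ into an arithmetic condition on torsion of $C_\lambda$ and then to reduce the result to a computation of the exponent of the finite abelian group $(\Z/N\Z)^\times/\{\pm 1\}$.

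First, I would combine Proposition \ref{identification of self-maps}, which identifies the flow operator on $M_{\frac 12 P}$ with Sun--Yang--Zuo's $\varphi_{\lambda,p}$, with Proposition \ref{decent} and the main theorem (Theorem \ref{main result}). Writing $x=\pi(P)$ for $P\in C_\lambda$ of exact order $N$, these give
\[
\varphi_{\lambda,p}(\pi(Q))\;=\;\pi\bigl(\pm[p]Q\bigr)\;=\;\pi\bigl([p]Q\bigr)
\]
for every $Q\in C_\lambda$, the last equality because $\pi\circ\sigma=\pi$ kills the sign ambiguity; by iteration, $\varphi_{\lambda,p}^{\,f}(x)=\pi\bigl([p^f]P\bigr)$. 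Consequently $\varphi_{\lambda,p}^{\,f}(x)=x$ if and only if $[p^f]P=\pm P$, equivalently $p^f\equiv\pm 1\pmod{N}$.

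Next, I would unwind Definition \ref{globally periodic}: the period $f(x)$ is the smallest positive integer $f$ such that $\varphi_{\lambda,p}^{\,f}(x)=x$ at every prime $p$ in a cofinite subset of closed points of the spread-out base $S$. By the previous step, this requires $p^f\equiv\pm 1\pmod N$ for all such $p$. Dirichlet's theorem on primes in arithmetic progression implies that every residue class $a\in(\Z/N\Z)^\times$ is represented by infinitely many such primes; hence
\[
f(x)\;=\;\min\bigl\{\,f\geq 1 \,:\, a^f\equiv\pm 1\pmod{N}\ \text{for every } a\in(\Z/N\Z)^\times\,\bigr\},
\]
which is precisely the exponent of the finite abelian group $H_N:=(\Z/N\Z)^\times/\{\pm 1\}$.

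Finally, I would identify $\exp(H_N)$ with $\lambda'(N)$. If a nontrivial primitive root exists modulo $N$, then $(\Z/N\Z)^\times$ is cyclic of order $\lambda(N)\geq 2$ with $-1$ as its unique element of order $2$, so $H_N$ is cyclic of order $\lambda(N)/2$ and $\exp(H_N)=\lambda(N)/2$. In the remaining cases, I would use the CRT decomposition of $(\Z/N\Z)^\times$ into cyclic factors to exhibit an element $a$ of order $\lambda(N)$ whose cyclic subgroup $\langle a\rangle$ does not contain $-1$; its image in $H_N$ still has order $\lambda(N)$, so $\exp(H_N)=\lambda(N)$. Such an $a$ exists because, in the non-cyclic case, $(\Z/N\Z)^\times[2]$ has more than two elements, so $-1$ is not forced to be the $(\lambda(N)/2)$-th power of any generator of a cyclic factor of maximal order. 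The main (though mild) obstacle is this final case analysis, carried out for the families $N=2^k$ with $k\geq 3$, $N=4m$ with $m$ odd $>1$, and $N$ with at least two distinct odd prime factors; in each family, one writes down an explicit element of order $\lambda(N)$ avoiding $\{\pm 1\}$ from the CRT factorization.
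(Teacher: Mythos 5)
Your proof takes essentially the same route as the paper: identify $\varphi_{\lambda,p}$-periodicity with the congruence $p^f \equiv \pm 1 \pmod N$ via Theorem~\ref{main result}, apply Dirichlet to convert ``for almost all primes $p$'' into ``for all units $a \pmod N$,'' and compute the exponent of $(\Z/N\Z)^\times/\{\pm 1\}$ by splitting into the cyclic and non-cyclic cases. One small caveat in the final step: your three families $N=2^k$ ($k\geq 3$), $N=4m$ ($m$ odd $>1$), and $N$ with two or more distinct odd prime factors omit $N=2^{j}p^{k}$ with $j\geq 3$ and $p$ an odd prime (e.g.\ $N=24,40,48$), so the enumeration of non-cyclic $N$ should be completed; the paper is equally terse here, simply asserting that for every proper divisor $d$ of $\lambda(N)$ some unit $g$ satisfies $g^{d}\neq\pm 1$.
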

\begin{proof}
By Proposition \ref{intersection points are fixed}, the statement holds for two torsion points. We assume $N>2$ below. Note that $\lambda(N)$ is then even. By Theorem \ref{main result}, it follows that the period $f(x)$ of $x$ is the smallest positive integer satisfying the equality 
\begin{equation*}\label{torsion and periodic}
		p^{f(x)} \equiv \pm 1 \mod N,
\end{equation*}
for almost all primes $p$. By Dirichlet density theorem, it follows that when $U_N$ is cyclic, $f(x)=\frac{\lambda(N)}{2}$. Otherwise, since for any proper divisor $d$ of $ \lambda(N)$, there exists some element $g$ such that $g^{d} \neq \pm 1$, it follows that $f(x)=\lambda(N)$ in the remaining case. 		 
\end{proof}
For $N\geq 2$, define $\phi_2(N)=N^2 \prod_{i=0}^{n}(1-\frac{1}{p_{i}^{2}})$, which counts the number of strict $N$-torsion points in $(\Z/N\Z)^2\cong C_{\lambda}[N]$.
\begin{corollary}\label{counting for M_{1/2P}}
For any $f\in \mathbb{N}$, the number of $f$-periodic points in the moduli $M_{\frac{1}{2}P}$ equals 
\begin{equation*}
		Z(f)=\begin{cases}
  26, \quad & \text{if } f=1,\\
		\sum_{N\in \lambda'^{-1}(f)}\frac{\phi_2(N)}{2}, \quad & \text{if } f>1, \\
\end{cases}
\end{equation*} 
\end{corollary}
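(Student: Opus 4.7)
The plan is to reduce the counting to a counting of orbits of torsion points of $C_\lambda$ under the hyperelliptic involution $\sigma$, and then to combine Proposition \ref{torsion order to period} with the explicit description of $\lambda'^{-1}(1)$.

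First I would set up the correspondence. By Proposition \ref{case 2} (applied in the special case $n=0$ and $\sum_i \alpha_i = 1/2$ concentrated at one $P$), a point of $M_{\frac{1}{2}P}(\mathbb{C})$ is periodic if and only if its image in $M_{\Higgs} \cong \mathbb{P}^1$ lies in $\pi(C_\lambda[\infty])$, where $C_\lambda[\infty]$ denotes the set of torsion points of $C_\lambda$. Since $\pi$ identifies precisely $P$ with $\sigma(P) = -P$, the set of periodic points of $M_{\frac{1}{2}P}(\mathbb{C})$ is in bijection with $C_\lambda[\infty]/\langle \sigma \rangle$.

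Next I would stratify this set by the period. By Proposition \ref{torsion order to period}, the period of $\pi(P)$ for a strict $N$-torsion point $P$ equals $\lambda'(N)$. Hence
\begin{equation*}
Z(f) = \#\,\bigl\{\, \overline{P} \in C_\lambda[\infty]/\langle \sigma \rangle : P \text{ has exact order } N \text{ with } \lambda'(N) = f \,\bigr\}.
\end{equation*}
Since $C_\lambda[N] \cong (\Z/N\Z)^2$, the number of strict $N$-torsion points equals $\phi_2(N)$, by definition. For $N \geq 3$, no strict $N$-torsion point is fixed by $\sigma$ (as the $\sigma$-fixed points are exactly the $2$-torsion points), so $\sigma$ acts freely on strict $N$-torsion, yielding $\phi_2(N)/2$ orbits. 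This immediately gives the stated formula for $f > 1$, since $\lambda'(N) > 1$ forces $N \geq 3$.

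Finally, I would treat the case $f = 1$ by enumerating $\lambda'^{-1}(1)$. From the explicit description of $\lambda(N)$, one has $\lambda'(N) = 1$ if and only if $N \in \{1,2,3,4,6\}$: indeed $\lambda(N) = 1$ forces $N \in \{1,2\}$, and $\lambda(N) = 2$ with $U_N$ cyclic gives $N \in \{3,4,6\}$ (while $N = 8, 12$ have $U_N \cong (\Z/2)^2$ non-cyclic, so $\lambda'(N) = 2$). For $N = 1$ the origin contributes one periodic point; for $N = 2$ the three strict $2$-torsion points are all $\sigma$-fixed and contribute $3$; for $N \in \{3,4,6\}$ the orbit counts are $\phi_2(3)/2 = 4$, $\phi_2(4)/2 = 6$, $\phi_2(6)/2 = 12$. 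The total is $1 + 3 + 4 + 6 + 12 = 26$. The only mild subtlety is the exceptional contributions from $N \leq 2$ (where $\sigma$ acts trivially on $2$-torsion), which is exactly why $f=1$ needs a separate formula; no genuinely hard step is involved beyond the already established Proposition \ref{torsion order to period}.
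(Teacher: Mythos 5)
Your proof is correct and follows essentially the same route as the paper's: reduce to torsion points on $C_\lambda$ modulo the involution, apply Proposition \ref{torsion order to period}, note that $\sigma$ acts freely on strict $N$-torsion for $N\geq 3$ (so each such $N$ contributes $\phi_2(N)/2$ orbits), and handle $f=1$ separately because $\lambda'^{-1}(1)=\{1,2,3,4,6\}$ includes $N=1,2$ where $\sigma$ acts trivially. The paper's own proof asserts the $f>1$ case as ``clear'' and only writes out $Z(1)=4+\frac{\phi_2(3)+\phi_2(4)+\phi_2(6)}{2}=26$; you have simply made explicit the orbit-counting and the enumeration of $\lambda'^{-1}(1)$ that the paper leaves implicit.
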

\begin{proof}
The statement is clear, except for the case $f=1$. It is easy to see that, other than the four 2-torsion points of $C_{\lambda}$, the images of strict $3,4,6$-torsion points also contribute to the set of one-periodic points, and there are no others. Therefore, 
$$
Z(1)=4+\frac{\phi_{2}(3) +\phi_{2}(4) +\phi_{2}(6)}{2}=26.
$$
This completes our proof.
\end{proof}	

\begin{remark}
The formula on number of $f$-periodic points is known to Jinbang Yang. The number 26 first appeared in Kang Zuo's talk at Lyon \cite{Zuo}. By the formula, one sees that \emph{not} every natural number appears as a period. The first such number is 7. 
\end{remark}

Write $\alpha=(\alpha_1,\cdots,\alpha_4)$, and $\alpha_i=\frac{m_i}{N_i}$, where $m_i=0, N_i=1$ if $\alpha_i=0$, and $(m_i,N_i)=1$ otherwise. Set $N=\mathrm{lcm} \left\lbrace N_1,\cdots,N_4\right \rbrace$. Recall from Proposition \ref{identification of self-maps}, the flow operator in characteristic $p$ with $(p,N)=1$ induces the map
$$
\varphi_{\alpha}: M_{\alpha}(\bar \F_p)\to M_{\langle p\alpha\rangle}(\bar \F_p).
$$
\begin{lemma}
The least positive integer $\Lambda_{\alpha}$ such that $\varphi_{\alpha}^{\Lambda_{\alpha}}$ is a self-map of $M_{\alpha}(\bar \F_p)$ equals $\lambda(N)$.
\end{lemma}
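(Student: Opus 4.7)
The plan is to reduce the statement to a purely number-theoretic computation about the multiplicative order of $p$ modulo $N$, and then invoke the definition of the Carmichael function. The intended reading of $\Lambda_\alpha$ must be the least integer that works universally in $p$ (i.e., for every prime $p$ coprime to $N$), since for a fixed $p$ the answer would be $\mathrm{ord}_N(p)$, which obviously depends on $p$. With this reading, the lemma is essentially the definition of $\lambda(N)$ once the appropriate translation is made.

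First, by Proposition \ref{identification of self-maps}, $\varphi_\alpha$ sends $M_\alpha(\bar\F_p)$ to $M_{\langle p\alpha\rangle}(\bar\F_p)$, so by iteration $\varphi_\alpha^k$ maps $M_\alpha(\bar\F_p)$ into $M_{\langle p^k\alpha\rangle}(\bar\F_p)$. The map $\varphi_\alpha^k$ is therefore a self-map (for this particular $p$) if and only if $\langle p^k\alpha_i\rangle = \alpha_i$ for each $i\in\{1,2,3,4\}$. Since $\alpha_i=m_i/N_i$ with $(m_i,N_i)=1$, this equality is equivalent to $N_i\mid p^k m_i - m_i$, hence to $N_i\mid p^k - 1$. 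Taking this over all $i$, the condition becomes
\begin{equation*}
p^k\equiv 1\pmod{N},
\end{equation*}
where $N=\mathrm{lcm}(N_1,\dots,N_4)$. In particular, for a given $p$ coprime to $N$, the minimal such $k$ is exactly $\mathrm{ord}_N(p)$.

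Next I would package this over all primes. The least $k$ such that $\varphi_\alpha^k$ is a self-map of $M_\alpha(\bar\F_p)$ simultaneously for all primes $p$ coprime to $N$ is
\begin{equation*}
\Lambda_\alpha=\mathrm{lcm}\bigl\{\mathrm{ord}_N(p):p\text{ prime},\ (p,N)=1\bigr\}.
\end{equation*}
By the very definition of the Carmichael function, $\lambda(N)$ is the exponent of the group $U_N=(\Z/N\Z)^\times$, so $a^{\lambda(N)}\equiv 1\pmod{N}$ for every $a\in U_N$; thus $\Lambda_\alpha$ divides $\lambda(N)$. Conversely, there exists an element $a_0\in U_N$ of order exactly $\lambda(N)$ (this holds for any finite abelian group: the exponent is realized by some element), and by Dirichlet's theorem on primes in arithmetic progressions there are infinitely many primes $p$ with $p\equiv a_0\pmod N$. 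Each such $p$ has $\mathrm{ord}_N(p)=\lambda(N)$, so $\lambda(N)$ divides $\Lambda_\alpha$. Hence $\Lambda_\alpha=\lambda(N)$.

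The only slightly delicate point, which I would verify first, is the reduction of the condition $\langle p^k\alpha_i\rangle=\alpha_i$ to a congruence modulo the single integer $N$ (rather than checking it $i$ by $i$); this uses the coprimality $(m_i,N_i)=1$ in a crucial way, because otherwise the condition on $\alpha_i$ would only force a divisor of $N_i/\gcd(m_i,N_i)$. After that, the main obstacle is just the existence of an element of order $\lambda(N)$ in $U_N$, which is standard but is the place where the structure of $U_N$ (cyclic or a product with $(\Z/2)^a$-factors) enters, exactly as in Proposition \ref{torsion order to period}.
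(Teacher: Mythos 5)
Your proof is correct and follows essentially the same route as the paper: both reduce the self-map condition to the congruence $p^k\equiv 1$ modulo each $N_i$ using $(m_i,N_i)=1$, and then pin down the least universal exponent via Dirichlet's theorem on primes in arithmetic progressions. The only cosmetic difference is that you combine the four congruences into a single one modulo $N$ before invoking Dirichlet, whereas the paper treats each $N_i$ separately (citing the proof of Proposition~\ref{torsion order to period} for the Dirichlet-density step) and then takes the lcm, tacitly using $\mathrm{lcm}\{\lambda(N_1),\dots,\lambda(N_4)\}=\lambda(N)$.
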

\begin{proof}
Suppose $\alpha_1=\frac{m_1}{N_1}\neq 0$. As $(m_1,N_1)=1$, the least positive integer $f$ such that the equality
$$
p^{f}m_1 \equiv m_1 \mod N_1
$$
holds for almost all primes $p$ equals $\lambda(N_1)$ by the proof of Proposition \ref{torsion order to period}. Hence
$$
\Lambda_{\alpha}=\mathrm{lcm} \left\lbrace \lambda(N_1),\cdots,\lambda(N_4)\right \rbrace=\lambda(N),
$$
as claimed.
\end{proof}
Thus the period of a periodic point in $M_{\alpha}$ must be a multiple of $\Lambda_{\alpha}$. For $f\in \N$, let 
$$
\Lambda_{\alpha}=\prod_{i=1}^np_i^{s(\Lambda_{\alpha})_i}, \quad f=\prod_{i=1}^np_i^{s(f)_i},
$$ 
where
$$
s(\Lambda_{\alpha})_i \geq 0, \ s(f)_i\geq 0,\  \max\{s(\Lambda_{\alpha})_i,s(f)_i\}>0.
$$
Set $\textrm{D}(f)=\{p_i,\ p_i|f\}\subset \{p_1,\cdots,p_n\}$.
\begin{proposition}\label{counting periodic points for M_alpha}
Notation as above. Then the number of $\Lambda_{\alpha}\cdot f$-periodic points in $M_{\alpha}$ equals
	\begin{equation*}
	\sum_{p_i\notin \textrm{D}(f), \atop 0\leq j_i\leq s(\Lambda_{\alpha})_i}Z\left(\frac{\Lambda_{\alpha}\cdot f}{p_i^{j_i}}\right).
	\end{equation*}
\end{proposition}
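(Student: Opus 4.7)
The plan is to transport the counting problem from $M_{\alpha}$ to $M_{\frac{1}{2}P}$ via the natural identifications provided by Proposition \ref{identification of self-maps}, and then apply Corollary \ref{counting for M_{1/2P}} after some elementary arithmetic bookkeeping.

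First, under the chain of identifications $M_{\alpha}\cong M_{\mathrm{Higgs}}\cong M_{\frac{1}{2}P}$, Proposition \ref{identification of self-maps} tells us that in each geometric fiber of the spread-out (for primes $p$ coprime to $N$), $\varphi_{\alpha}$ coincides with the Sun-Yang-Zuo flow $\varphi_{\lambda,p}$ viewed as a self-map of $M_{\frac{1}{2}P}$. Iterating $\Lambda_{\alpha}$ times, $\varphi_{\alpha}^{\Lambda_{\alpha}}$ on $M_{\alpha}$ is thus identified with $\varphi_{\lambda,p}^{\Lambda_{\alpha}}$ on $M_{\frac{1}{2}P}$. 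Writing $\tilde{x}\in M_{\frac{1}{2}P}$ for the point corresponding to $x\in M_{\alpha}$, the minimal period of $x$ in $M_{\alpha}$---which by the definition of $\Lambda_{\alpha}$ must be a multiple of $\Lambda_{\alpha}$---therefore equals $\operatorname{lcm}(\Lambda_{\alpha},f(\tilde{x}))$, where $f(\tilde{x})$ denotes the period of $\tilde{x}$ in $M_{\frac{1}{2}P}$ as in Corollary \ref{counting for M_{1/2P}}.

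Second, I would characterize those $\tilde{x}$ that correspond to period-exactly-$\Lambda_{\alpha}f$ points of $M_{\alpha}$. The identity $\operatorname{lcm}(\Lambda_{\alpha},f(\tilde{x}))=\Lambda_{\alpha}f$ is equivalent to writing $f(\tilde{x})=fd$ for some $d\mid\Lambda_{\alpha}$ satisfying $\gcd(\Lambda_{\alpha}/d,f)=1$. Using the prime factorization $d=\prod_i p_i^{a_i}$ with $0\leq a_i\leq s(\Lambda_{\alpha})_i$, the coprimality condition forces $a_i=s(\Lambda_{\alpha})_i$ whenever $p_i\in D(f)$, while leaving $a_i$ unconstrained for $p_i\notin D(f)$. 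Substituting $j_i:=s(\Lambda_{\alpha})_i-a_i$ for $p_i\notin D(f)$, a short computation (matching the $p_i$-adic valuations of both sides) yields $fd=\Lambda_{\alpha}f\big/\prod_{p_i\notin D(f)}p_i^{j_i}$.

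Summing $Z(fd)$ over the admissible tuples $(j_i)_{p_i\notin D(f)}$ with $0\leq j_i\leq s(\Lambda_{\alpha})_i$---which by Corollary \ref{counting for M_{1/2P}} counts the $\tilde{x}\in M_{\frac{1}{2}P}$ with $f(\tilde{x})=fd$---yields precisely the formula in the proposition. I do not anticipate any serious obstacle in executing this plan; the only delicate point is the careful bookkeeping of the exponents $s(\Lambda_{\alpha})_i$ and $s(f)_i$ in the factorization of $\Lambda_{\alpha}f$, so as to parametrize the divisors $d$ without double-counting, and this is controlled by the explicit substitution above.
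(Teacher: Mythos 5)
Your proposal is correct and follows essentially the same route as the paper: both pass through the bijection $\beta: \sP_{\frac{1}{2}P}\to \sP_{\alpha}$ and the relation $f(\beta(\tilde{x}))=\operatorname{lcm}(\Lambda_{\alpha},f(\tilde{x}))$, then count solutions prime-by-prime. Your reparametrization via a divisor $d\mid\Lambda_{\alpha}$ with $\gcd(\Lambda_{\alpha}/d,f)=1$ is just a repackaging of the paper's case distinction on the exponents $t_i$; also note, as you implicitly do, that the $p_i^{j_i}$ appearing in the statement must be read as the product $\prod_{p_i\notin D(f)}p_i^{j_i}$ over the multi-index $(j_i)$.
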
	
\begin{proof}
Consider the composite of the natural identifications in Proposition \ref{identification of self-maps}:
$$
\beta: M_{\frac{1}{2}P}\stackrel{\cong}{\to} M_{\Higgs}\stackrel{\cong}{\to} M_{\alpha}. 
$$
Let $\sP_{\frac{1}{2}P}$ (resp. $\sP_{\alpha}$) be the subset of periodic points in $M_{\frac{1}{2}P}$ (resp. $M_{\alpha}$). Then $\beta: \sP_{\frac{1}{2}P}\to \sP_{\alpha}$ is a bijection. Moreover, for a periodic point $x$ in $M_{\frac{1}{2}P}$, one has 
$$
f(\beta(x))=\mathrm{lcm} \left\lbrace \Lambda_{\alpha}, f(x)\right \rbrace.
$$
Thus when and only when $f(x)=\prod_i p_i^{t_i}$, where
\begin{equation*}
		t_i=\begin{cases}
  s(f)_i+s(\Lambda_{\alpha})_i, \quad & \text{if } s(f)_i\neq 0,\\
		0,\cdots,s(\Lambda_{\alpha})_i,\quad & \text{if } s(f)_i=0,\\
\end{cases}
\end{equation*} 
$f(\beta(x))=\Lambda_{\alpha}\cdot f$. Then the result follows. 
\end{proof}

	\subsection{Equidistribution of periodic points}
The final subsection concerns the equidistribution property of $\sP_{\alpha}$ inside  $M_{\alpha}$. 
	\begin{definition}
		A polarized algebraic dynamical system over a field $ K $ consists of a triple $ (X, f, L) $ where:
		\begin{itemize}
			\item $ X $ is a projective variety over $K$,
			\item $ f : X \to X $ is an algebraic morphism over $ K $,
			\item $ L $ is an ample line bundle on $X$ polarizing $f$ in the sense that $f^{*}L \cong L^{\otimes q}$ for some positive integer $q$.
		\end{itemize}
	\end{definition}
	
We refer to \cite{Zhang2005DistributionsIA} for the notion of  an \emph{equilibrium measure} $ \mu_{f} $ and a \emph{canonical height} $ h_{f} $, which are associated to a polarized algebraic dynamical system $ (X, f, L) $. 
\begin{definition}
		Let $\left\lbrace x_{m}\right\rbrace_{m \geq 1}$ be an infinite sequence of $ \overline{K} $ points of $X$.
		\begin{itemize}
			\item The sequence is called equidistributed if the probability measure $ \mu_{x_{m}} $ converges weakly to the equilibrium measure $ \mu_{f} $.
			\item The sequence is called generic if any infinite subsequence is Zariski dense in $ X $.
			\item The sequence is called $ h_{f} $ -small if $ h_{f} (x_{m}) \to 0 $ as $m \to \infty$.
		\end{itemize}
	\end{definition}
Roughly speaking, the equidistribution theorem asserts that a generic and small sequence is equidistributed. For each positive integer $m$, the multiplication by $m$ on $\Sigma_{\Higgs}=C_{\lambda}$ descends to a morphism over $M_{\Higgs}$, which in turn induces a morphism over $M_{\alpha}$ via the natural identification in Proposition \ref{identification of self-maps}. We denote it by $f_m: M_{\alpha}\to M_{\alpha}$. 

\begin{proposition}
Notations as above. Then for the polarized algebraic dynamical system $(M_{\alpha},f_m,\sO(1))$, the set $\sP_{\alpha}$ of periodic points is equidistributed with respect to the standard Fubini-Study measure $\mu_{FS}$.
\end{proposition}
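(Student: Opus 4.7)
The plan is to recognize $f_m$ as a Lattès map attached to $[m]\colon C_\lambda\to C_\lambda$ and then apply the equidistribution theorem of Szpiro--Ullmo--Zhang to torsion points of $C_\lambda$. Under the identifications $M_\alpha\cong M_{\Higgs}\cong\P^1$ of Proposition \ref{identification of self-maps} and $\Sigma_{\Higgs}\cong C_\lambda$ of \S3, Proposition \ref{moduli interpretation of pi} upgrades the definition of $f_m$ to a commutative square
\[
\xymatrix{C_\lambda\ar[r]^{[m]}\ar[d]_{\pi}&C_\lambda\ar[d]^{\pi}\\ \P^1\ar[r]^{f_m}&\P^1,}
\]
so $f_m$ is the classical Lattès map of degree $m^2$. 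In particular $f_m^{*}\sO(1)\cong\sO(m^2)$, confirming that $(M_\alpha,f_m,\sO(1))$ is a polarized algebraic dynamical system with $q=m^2$.

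Next, combining the classification in Proposition \ref{case 2} with Proposition \ref{identification of self-maps}, one identifies $\sP_\alpha\subset M_\alpha(\mathbb C)$ with $\pi(C_\lambda^{\mathrm{tors}}(\mathbb C))$: a point is periodic precisely when it is the image of a torsion point of $C_\lambda$. Consequently each $x\in\sP_\alpha$ has N\'eron--Tate height zero on $C_\lambda$, which descends to $h_{f_m}(x)=0$ for the Lattès system.

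The main step is the equidistribution theorem. For any infinite sequence $\{x_n\}\subset\sP_\alpha$, genericity is automatic (any infinite subset of the curve $\P^1$ is Zariski dense), and $h_{f_m}(x_n)=0$ is trivially small. Hence by \cite{Zhang2005DistributionsIA} the averaged probability measures $\mu_{x_n}$ converge weakly to the equilibrium measure $\mu_{f_m}$.

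Finally, one identifies $\mu_{f_m}$ with $\mu_{FS}$. Standard Lattès theory yields $\mu_{f_m}=\pi_{*}\mu_{\mathrm{Haar}}$: the Haar measure on $C_\lambda$ satisfies $[m]^{*}\mu_{\mathrm{Haar}}=m^2\mu_{\mathrm{Haar}}$, and combined with the semi-conjugacy $\pi\circ[m]=f_m\circ\pi$ together with the uniqueness characterization $f_m^{*}\mu=m^2\mu$ of the equilibrium measure, this forces $\mu_{f_m}=\pi_{*}\mu_{\mathrm{Haar}}$. The remaining coincidence $\pi_{*}\mu_{\mathrm{Haar}}=\mu_{FS}$ is a matter of normalization on $\P^1$ via the Weierstrass parametrization of $\pi$; checking it will be the delicate step of the proof.
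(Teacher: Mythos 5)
Your overall approach matches the paper's: identify $\sP_\alpha$ with the image of $C_\lambda^{\mathrm{tors}}$ under $\pi$, observe that these are precisely the preperiodic points of $f_m$ and therefore $h_{f_m}$-small, and then invoke an equidistribution theorem. However, there is one substantive gap. Since $\lambda\in\mathbb C\setminus\{0,1\}$ is \emph{not} assumed algebraic, the paper takes care to split the argument: when $\Q(\lambda)$ is a number field it cites Yuan \cite[Theorem 3.1]{Yuan2008BigLB}, and when $\Q(\lambda)$ is a function field (i.e.\ $\lambda$ transcendental) it cites Moriwaki \cite[Theorem 6.1]{Moriwaki1998ArithmeticHF}. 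Your citation of Zhang's survey \cite{Zhang2005DistributionsIA} does not by itself supply the equidistribution of small points in the function-field case; that requires the separate input from Moriwaki's theory of arithmetic heights over finitely generated fields. Without this, your argument only treats $\lambda\in\bar{\Q}$.

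On your last step, you rightly sense that something is off: the identity $\pi_*\mu_{\mathrm{Haar}}=\mu_{FS}$ is \emph{not} a normalization issue and does not hold. The Latt\`es equilibrium measure $\mu_{f_m}=\pi_*\mu_{\mathrm{Haar}}$ has a density that degenerates at the four branch points (since $\wp'$ vanishes there), whereas $\mu_{FS}$ is smooth and nowhere vanishing on $\P^1$. The paper's proof silently passes over this distinction and simply concludes from Yuan/Moriwaki, which give convergence to the equilibrium measure of the dynamical system, not to $\mu_{FS}$. So your hesitation is well-placed, but the resolution is that the conclusion should be read as equidistribution with respect to $\mu_{f_m}$, not that the two measures can be made to coincide by renormalization.
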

\begin{proof}
Note that the set $\textrm{Prep}(f_m)$ of preperiodic points in $M_{\alpha}$ with respect to $f_m$ is exactly the image of the set of preperioic points in $\Sigma_{\Higgs}$ with respect to $[m]$, which is the set of torsion points. Hence $\sP_{\alpha}=\textrm{Prep}(f_m)$. But since any preperiodic point is $h_{f_m}$-small (actually zero), the result follows from \cite[Theorem 3.1]{Yuan2008BigLB} when $ \mathbb{Q}(\lambda) $ is a number field and \cite[Theorem 6.1]{Moriwaki1998ArithmeticHF} when $ \mathbb{Q}(\lambda) $ is a function field.
\end{proof}

\end{document}